\theoremstyle{plain}
\newtheorem{theorem}{Theorem}[section]
\newtheorem{lemma}[theorem]{Lemma}
\newtheorem{corollary}[theorem]{Corollary}
\newtheorem{proposition}[theorem]{Proposition}
\newtheorem{remark}[theorem]{Remark}
\theoremstyle{definition}
\newtheorem{definition}[theorem]{Definition}
\newtheorem{example}[theorem]{Example}
\DeclareMathOperator{\Int}{Int}
\DeclareMathOperator{\Pic}{Pic}
\DeclareMathOperator{\IntR}{Int{}^\text{R}}
\DeclareMathOperator{\minval}{minval}
\DeclareMathOperator{\loc}{loc}
\DeclareMathOperator{\Spec}{Spec}
\renewcommand{\epsilon}{\varepsilon}
\newcommand{\R}{{\mathbb R}}
\newcommand{\N}{{\mathbb N}}
\newcommand{\Q}{{\mathbb Q}}
\newcommand{\Z}{{\mathbb Z}}
\newcommand{\p}{\mathfrak{p}}
\newcommand{\m}{\mathfrak{m}}
\newcommand{\M}{\mathfrak{M}}
\definecolor{gray}{rgb}{.5,.5,.5}
\definecolor{black}{rgb}{0,0,0}
\definecolor{blue}{rgb}{0,0,1}
\definecolor{red}{rgb}{1,0,0}
\definecolor{green}{rgb}{0,1,0}
\definecolor{gold}{rgb}{.5,.5,.2}
\definecolor{yellow}{rgb}{1,1,.4}
\definecolor{purple}{rgb}{.5,0,.5}
\definecolor{darkgreen}{rgb}{0,.5,0}
\definecolor{orange}{rgb}{1,.55,0}
\definecolor{white}{rgb}{1,1,1}
\let\originalleft\left
\let\originalright\right
\renewcommand{\left}{\mathopen{}\mathclose\bgroup\originalleft}
\renewcommand{\right}{\aftergroup\egroup\originalright}
\let\originaltodo\todo
\renewcommand{\todo}[1]{\originaltodo[inline]{#1}}
\title{Ring Structure of Integer-Valued Rational Functions}
\author{Baian Liu}
\begin{document}

\maketitle
\begin{abstract}
	Integer-valued rational functions are a natural generalization of integer-valued polynomials. Given a domain $D$, the collection of all integer-valued rational functions over $D$ forms a ring extension $\IntR(D)$ of $D$. For a valuation domain $V$, we characterize when $\IntR(V)$ is a Prüfer domain and when $\IntR(V)$ is a Bézout domain. We also extend the classification of when $\IntR(D)$ is a Prüfer domain.
\end{abstract}
\section{Introduction}

\indent\indent Integer-valued polynomials appear in many areas of mathematics, including Hilbert polynomials of polynomial rings and interpolation formulas. A natural generalization of integer-valued polynomials is integer-valued rational functions. One of the first analyses of integer-valued rational functions was done by Kochen \cite{Kochen}. Kochen uses rings of integer-valued rational functions to help determine when Diophantine equations have an integral solution over a $p$-adically closed field. 

We are interested in studying sets of integer-valued rational functions over a ring as a ring itself. We will be investigating the ring-theoretic properties of these rings of integer-valued rational functions on their own and in relation to the ring over which they are defined.

 We start with a domain $D$. Then we want to define the ring of integer-valued rational functions $\IntR(D)$ over $D$, as well as some notions to help us investigate $\IntR(D)$. 

\begin{definition}
	Let $D$ be a domain and $K$ its field of fractions. We define the \textbf{ring of integer-valued rational functions over $D$} to be
	\[
	\IntR(D) = \{ \varphi \in K(x) \mid \varphi(D) \subseteq D\}.
	\]
\end{definition}

\begin{remark}
	The set $\IntR(D)$ is in fact a ring which is also closed under composition.
\end{remark}

Requiring $\varphi(D) \subseteq D$ means that $\varphi$ cannot have any poles in $D$. Since a rational function only has finitely many poles, we many choose to ignore these finitely many elements. This turns out to not change the set of rational functions we are considering, since if $D$ is not a field, a rational function $\varphi \in K(x)$ such that $\varphi(d) \in D$ for all but finitely many $d \in D$ is in $\IntR(D)$ \cite[p. 260]{Cahen}.

We can also obtain a ring by taking the collection of all rational functions that are integer-valued on some subset of the field of fractions. These rings can help give a more nuanced description of what properties of $D$ lead to certain properties of $\IntR(D)$.

\begin{definition}
	Take $D$ to be a domain and $K$ its field of fractions. Let $E$ be some subset of $K$. We can more generally define the \textbf{ring of integer-valued rational functions on $E$ over $D$} to be
	\[
	\IntR(E,D) = \{ \varphi \in K(x) \mid \varphi(E) \subseteq D\}.
	\]
\end{definition}

In particular, for the ring $\IntR(K,D)$, we may also choose to ignore finitely many values in determining if a rational function is integer-valued on $K$. 

\begin{proposition}
	Let $D$ be a domain that is not a field and $K$ its field of fractions. If $\varphi \in K(x)$ is such that $\varphi(r) \in D$ for almost all $r \in K$, then $\varphi \in \IntR(K, D)$. 
\end{proposition}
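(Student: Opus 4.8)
The plan is to deduce this $K$-statement from the already-cited $D$-statement (the one attributed to \cite[p.~260]{Cahen}: over a domain $D$ that is not a field, a rational function in $K(x)$ which is $D$-valued on all but finitely many elements of $D$ lies in $\IntR(D)$). The mechanism is a linear rescaling $x \mapsto x/c$ that drags an arbitrary point $b/c \in K$ back into $D$.

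Concretely, I would proceed as follows. Let $\varphi \in K(x)$ with $\varphi(r) \in D$ for almost all $r \in K$, and let $F \subseteq K$ be the finite set of $r$ at which $\varphi$ is undefined or $\varphi(r) \notin D$; the goal is to show $\varphi(a) \in D$ for every $a \in K$ (this already forces $\varphi$ to be defined at $a$). Fix $a \in K$ and write $a = b/c$ with $b,c \in D$ and $c \neq 0$. Form the composition $\theta(x) := \varphi(x/c) \in K(x)$ (legitimate since $x/c$ is a nonconstant rational function). For $d \in D$ we have $\theta(d) = \varphi(d/c)$, and $d/c \in F$ forces $d \in cF$, which is finite because multiplication by $c \neq 0$ is injective on $K$. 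Hence $\theta(d) \in D$ for every $d$ in the cofinite set $D \setminus cF$. Since $D$ is not a field, the cited fact gives $\theta \in \IntR(D)$, i.e.\ $\theta(D) \subseteq D$; in particular $\varphi(a) = \varphi(b/c) = \theta(b) \in D$. As $a$ was arbitrary, $\varphi(K) \subseteq D$, that is, $\varphi \in \IntR(K,D)$.

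I do not expect a real obstacle here: all the substance is carried by the cited $D$-version, and the argument is essentially a one-line change of variables. The only points needing a word of care are (i) that the hypothesis ``$\varphi$ is defined and $D$-valued off a finite set'' survives precomposition with $x/c$, which it does exactly because $c \neq 0$ makes $x\mapsto cx$ a bijection of $K$ sending the finite set $F$ to the finite set $cF$; and (ii) that ``$\theta(D)\subseteq D$'' genuinely yields that $\theta$, hence $\varphi$, takes an (existing) value in $D$ at the relevant points, since membership in $D$ presupposes that the value is defined. One could equivalently phrase the substitution as the $K$-automorphism $x \mapsto x/c$ of $K(x)$, but the direct computation above is cleanest.
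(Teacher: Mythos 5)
Your proof is correct and is essentially identical to the paper's: both precompose $\varphi$ with $x \mapsto x/c$, invoke the cited fact that almost-integer-valued rational functions on $D$ lie in $\IntR(D)$, and then evaluate at the numerator. You simply spell out in more detail (via the finiteness of $cF$) why the rescaled function is almost integer-valued on $D$, which the paper leaves implicit.
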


\begin{proof}
	Let $b \in D$ such that $b \neq 0$. Define $\psi_b(x) = \varphi\left(\frac{x}{b}\right)$. By assumption, $\psi_b$ is almost integer-valued on $D$, so $\psi_b$ is integer-valued on $D$. Now let $d \in K$. We can write $d = \frac{a}{b}$ for some $a, b \in D$ with $b \neq 0$. Then $\varphi(d) = \psi_b(a) \in D$. Thus, $\varphi \in \IntR(K,D)$. 
\end{proof}

The definition of rings of integer-valued rational functions appears to be very similar to that of rings of integer-valued polynomials. Despite this, we will see that the behavior of rings of integer-valued rational functions can be vastly different from that of rings of integer-valued polynomials. For example, rings of integer-valued rational functions are not as sensitive to infinite residue fields. In the case of integer-valued polynomials, if $D$ is a domain with infinite residue fields, then $\Int(D) = D[x]$ \cite[p. 10]{Cahen}. 

One notion that marks a difference between integer-valued rational functions and polynomials is the notion of unit-valued polynomials. Take $D$ to be a domain. Because the inverse of a nonconstant polynomial is not a polynomial, unit-valued polynomials are not units in $\Int(D)$. However, the inverse of a polynomial is a rational function, so unit-valued polynomials are units in $\IntR(D)$.

\begin{definition}
	Let $D$ be a domain and let $K$ be the field of fractions of $D$. Then a polynomial $f \in D[x]$ is \textbf{unit-valued} over $D$ if $f(D)\subseteq D^\times$. We will denote the set of all unit-valued polynomials by
	\[
	T = \{f\in D[x] \mid f(D) \subseteq D^\times \}.
	\]
\end{definition}

\begin{remark}
	Let $\Int(D)$ denote the ring of unit-valued polynomials of $D$. We can see that $T$ is a multiplicative subset of $\Int(D)$ and we have the containment
	\[
	T^{-1}\Int(D) \subseteq \IntR(D). 
	\]
\end{remark}

The containment $T^{-1}\Int(D) \subseteq \IntR(D)$ can be strict (see Example \ref{Ex:NotJustLocalizationByUVPolynomials}), so unit-valued polynomials do not explain all of the differences between $\Int(D)$ and $\IntR(D)$. Nevertheless, unit-valued polynomials are a useful tool for describing some of the structure of $\IntR(D)$. In addition, unit-valued polynomials over a domain $D$ are closely linked to the residue fields of $D$, so residue fields of $D$ can help describe the structure of $\IntR(D)$. 

\begin{proposition}\label{Prop:UVNoRoots}
	Let $D$ be a local domain with maximal ideal $\mathfrak{m}$. Then $f \in D[x]$ is unit-valued if and only if $f \mod \mathfrak{m}$ has no roots in $D/\mathfrak{m}$. 
\end{proposition}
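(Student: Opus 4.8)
The plan is to unwind both sides of the stated equivalence into assertions about the residue field $D/\mathfrak{m}$ and observe that they literally coincide. First I would invoke the defining feature of a local domain: an element $d \in D$ is a unit if and only if $d \notin \mathfrak{m}$. Writing $\pi \colon D \to D/\mathfrak{m}$ for the quotient map, this says $d \in D^\times$ if and only if $\pi(d) \neq 0$. Consequently, $f$ being unit-valued, i.e.\ $f(D) \subseteq D^\times$, is equivalent to the condition that $\pi(f(d)) \neq 0$ for every $d \in D$.

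Next I would use that reduction modulo $\mathfrak{m}$ is a ring homomorphism, and hence commutes with polynomial evaluation: if $\bar f \in (D/\mathfrak{m})[x]$ denotes the polynomial obtained by applying $\pi$ to the coefficients of $f$, then $\pi(f(d)) = \bar f(\pi(d))$ for all $d \in D$. Combining this with the previous step, $f$ is unit-valued if and only if $\bar f(\pi(d)) \neq 0$ for all $d \in D$. Finally, since $\pi$ is surjective, as $d$ ranges over $D$ the element $\pi(d)$ ranges over all of $D/\mathfrak{m}$; so the last condition is precisely the statement that $\bar f = f \bmod \mathfrak{m}$ has no root in $D/\mathfrak{m}$. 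This settles both directions simultaneously.

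I do not expect any genuine obstacle: the argument is a short chain of equivalences. The only points that warrant care are the two standard facts being used—that the local hypothesis identifies the non-units of $D$ exactly with $\mathfrak{m}$, and that a ring quotient intertwines evaluation of $f$ with evaluation of its reduction $\bar f$—together with the elementary observation that surjectivity of $\pi$ lets us replace the quantifier over $D$ by a quantifier over the residue field.
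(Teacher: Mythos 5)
Your proof is correct and follows essentially the same route as the paper's: both unwind unit-valuedness to the condition $f(d) \notin \mathfrak{m}$ for all $d \in D$ (using that in a local ring the non-units are exactly $\mathfrak{m}$) and then pass to the residue field via the quotient map. You simply make explicit the two standard facts (evaluation commutes with reduction, and surjectivity of $\pi$) that the paper's one-line proof leaves implicit.
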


\begin{proof}
	We have that $f \in D[x]$ is unit-valued if and only if $f(d) \notin \mathfrak{m}$ for any $d \in D$, which happens if and only if $f \mod \mathfrak{m}$ has no roots in $D/\mathfrak{m}$.

\end{proof}

\begin{corollary}
	Let $D$ be a domain. Then $f \in D[x]$ is unit-valued if and only if for every maximal ideal $\m$ of $D$, the polynomial $f \mod \m$ has no roots in $D/\m$.
\end{corollary}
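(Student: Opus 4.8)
The plan is to deduce this from the elementary fact that an element $a \in D$ is a unit if and only if it lies in no maximal ideal of $D$, together with the surjectivity of the reduction maps $D \to D/\m$. No localization is needed; the global statement follows by directly chasing the definition of unit-valued.

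First I would unwind the definition: $f$ is unit-valued over $D$ means $f(d) \in D^\times$ for every $d \in D$. Since $a \in D^\times$ precisely when $a \notin \m$ for every maximal ideal $\m$ of $D$, being unit-valued is equivalent to the condition that $f(d) \notin \m$ for every $d \in D$ and every maximal ideal $\m$. Reducing modulo $\m$ and writing $\bar f = f \bmod \m \in (D/\m)[x]$, the condition $f(d) \notin \m$ becomes $\bar f(\bar d) \neq 0$ in $D/\m$, where $\bar d$ is the image of $d$.

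For one direction I would argue by contraposition: if $\bar f$ has a root $\alpha \in D/\m$ for some maximal ideal $\m$, then since $D \to D/\m$ is surjective I may choose $d \in D$ with $\bar d = \alpha$, and then $f(d) \equiv \bar f(\alpha) = 0 \pmod{\m}$, so $f(d) \in \m$ and in particular $f(d) \notin D^\times$; hence $f$ is not unit-valued. For the converse, if $\bar f$ has no root in $D/\m$ for every maximal ideal $\m$, then for each $d \in D$ and each $\m$ we have $\bar f(\bar d) \neq 0$, i.e.\ $f(d) \notin \m$; as $f(d)$ then avoids every maximal ideal, $f(d) \in D^\times$, so $f$ is unit-valued.

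There is no genuine obstacle here — the only point requiring attention is the use of surjectivity of $D \to D/\m$ when lifting a root of $\bar f$ back to an element of $D$, which is exactly what makes ``$\bar f$ has no root in the image of $D$'' the same as ``$\bar f$ has no root in $D/\m$.'' Alternatively one could apply Proposition \ref{Prop:UVNoRoots} to each localization $D_\m$ and use the isomorphism $D_\m/\m D_\m \cong D/\m$, after first checking that $f$ is unit-valued over $D$ if and only if it is unit-valued over $D_\m$ for every maximal ideal $\m$; but the direct argument above is shorter and sidesteps that extra verification.
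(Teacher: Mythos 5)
Your proof is correct. Both directions are watertight: the forward direction matches the paper's (unit values avoid every $\m$, and surjectivity of $D \to D/\m$ lets you lift any root of $f \bmod \m$ back to $D$), and the converse correctly combines ``$f(d)$ lies in no maximal ideal'' with the standard fact that such an element is a unit.

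The one point of divergence is in the converse. The paper routes it through localizations: it reads the hypothesis as saying $f$ is unit-valued over each $D_\m$ (implicitly via Proposition \ref{Prop:UVNoRoots} and $D_\m/\m D_\m \cong D/\m$), concludes $\frac{1}{f(a)} \in D_\m$ for every $\m$, and then uses $D = \bigcap_\m D_\m$ to get $\frac{1}{f(a)} \in D$. You instead argue globally: $f(d)$ avoids every maximal ideal of $D$, hence is a unit. Your version is more elementary --- it needs only that a nonunit generates a proper ideal contained in some maximal ideal, and it sidesteps the verification (which the paper leaves implicit) that ``no roots mod $\m$'' transfers to unit-valuedness over $D_\m$ and back. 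You even note the localization route as an alternative, so you have both arguments in hand. Either is acceptable; yours is the cleaner self-contained one, while the paper's phrasing makes the local-to-global mechanism via Proposition \ref{Prop:UVNoRoots} explicit.
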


\begin{proof}
	If $f$ is unit-valued over $D$, then $f$ is valued in $D\setminus \m$ for each maximal ideal $\m$, so $f \mod \m$ has no roots in $D/\m$. Conversely, if $f$ is unit-valued over $D_\m$ for every maximal ideal $\m$ of $D$, then $f(a), \frac{1}{f(a)} \in D$ for each $a \in D$, which means $f$ is unit-valued over $D$. 
\end{proof}

There exist domains $D$ such that $\IntR(D) = \Int(D)$. In other words, all of the integer-valued rational functions over $D$ are polynomials. This means analyzing $\IntR(D)$ is no different than analyzing $\Int(D)$, so we are only interested in domains $D$ such that $\IntR(D) \neq \Int(D)$. One observation we make is that if $D$ is a domain such that there exists a nonconstant unit-valued polynomial $f$, then $\frac{1}{f} \in \IntR(D) \setminus \Int(D)$. A ring with such a property is called a non-D-ring.

\begin{definition}
	Let $D$ be a domain. We call $D$ a \textbf{non-D-ring} if there is a nonconstant unit-valued polynomial $f \in D[x]$. 
\end{definition}

\begin{example}
	Let $D$ be a domain with a nonzero Jacobson radical, such as a semi-local domain or a local domain. Then $D$ is a non-D-ring. To see this, take $d$ to be a nonzero element in the Jacobson radical. Then $dx + 1$ is a nonconstant unit-valued polynomial. 
\end{example}

The Jacobson radical of $D$ need not to be nonzero for $D$ to be a non-D-ring, as seen in the following example.

\begin{example}\cite[Example 1.11]{LoperWithout}
	The domain $D = \Z\left[\frac{1}{p} \,\middle\vert\, p \equiv 1,2 \pmod 4, \text{$p$ is a prime}\right]$ has unit-valued polynomial $x^2+1$ and is therefore a non-D-ring. 
\end{example}

The property of being a non-D-ring turns out to be exactly the one we want to consider in order to study rings of integer-valued rational functions. 

\begin{proposition}\cite[Proposition 1]{GunjiMcQuillan}
	Let $D$ be a domain. Then $D$ is a non-D-ring if and only if $\IntR(D) \neq \Int(D)$. 
\end{proposition}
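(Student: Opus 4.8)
The plan is to prove the two implications separately. The forward direction is essentially immediate: if $D$ is a non-D-ring, with a nonconstant unit-valued $f \in D[x]$, then $1/f \in K(x)$ and $(1/f)(d) = f(d)^{-1} \in D^\times \subseteq D$ for every $d \in D$, so $1/f \in \IntR(D)$; and $1/f$ is not a polynomial, since a nonconstant polynomial has no inverse in $K[x]$. Hence $1/f \in \IntR(D) \setminus \Int(D)$, so $\IntR(D) \neq \Int(D)$.

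For the converse I would start from some $\varphi \in \IntR(D) \setminus \Int(D)$. Since $\varphi(D) \subseteq D$ already holds, $\varphi$ must fail to be a polynomial, so I write $\varphi = g/h$ with $g, h \in D[x]$ coprime in $K[x]$ and $\deg h \geq 1$ (take a lowest-terms representation in the PID $K[x]$ and clear denominators). Coprimality yields a relation $Pg + Qh = c$ with $P, Q \in D[x]$ and $0 \neq c \in D$ (clear denominators in a Bézout identity over $K[x]$). For every $d \in D$ one then has $h(d) \neq 0$ (a pole of $\varphi$ in $D$ is excluded, and in lowest terms a common root of $g$ and $h$ would contradict $Pg + Qh = c \neq 0$) and $h(d) \mid g(d)$ in $D$ (as $\varphi(d) = g(d)/h(d) \in D$); evaluating $Pg + Qh = c$ at $d$ then gives $h(d) \mid c$. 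So the nonconstant polynomial $h$ satisfies $h(d) \mid c$ for all $d \in D$.

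It remains to manufacture a nonconstant unit-valued polynomial from such an $h$, which is the crux of the argument. Put $a_0 = h(0) \neq 0$ and write $c = a_0 b$ (note $b \neq 0$). Substituting $x \mapsto cx$ and using $a_0 \mid c \mid c^i$ for $i \geq 1$, every coefficient of $h(cx)$ is divisible by $a_0$, so $k(x) := h(cx)/a_0 \in D[x]$ is a polynomial with $\deg k = \deg h$ and $k(0) = 1$; moreover $a_0 k(d) = h(cd)$ divides $c = a_0 b$, so cancellation in the domain $D$ gives $k(d) \mid b$ for all $d \in D$. Now write $k(x) = 1 + x\,m(x)$ with $m \in D[x]$ and substitute $x \mapsto bx$: I claim $f(x) := k(bx) = 1 + bx\,m(bx)$ is unit-valued. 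Indeed, for $d \in D$ the value $f(d) = 1 + bd\,m(bd)$ divides $b$ (apply $k(e) \mid b$ at $e = bd$), hence divides $bd\,m(bd) = f(d) - 1$, hence divides $1$. Since $\deg f = \deg k = \deg h \geq 1$, $f$ is the desired nonconstant unit-valued polynomial, so $D$ is a non-D-ring.

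The step I expect to be the main obstacle is exactly this last one: converting the boundedness-type condition $h(d) \mid c$ into a genuine unit-valued polynomial. The two substitutions are the essential idea — $x \mapsto cx$ normalizes the constant term to $1$, and $x \mapsto bx$ then forces unit values via the elementary fact that an element of $D$ that is congruent to $1$ modulo $b$ and divides $b$ must be a unit. The remaining work (coprimality bookkeeping over $K[x]$ versus $D[x]$, checking that the various quotients land in $D$, and tracking degrees) is routine, but worth writing out carefully since $D$ is only assumed to be a domain.
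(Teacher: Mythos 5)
Your proof is correct. Note that the paper itself does not prove this proposition — it only cites \cite[Proposition 1]{GunjiMcQuillan} — so there is no in-text argument to compare against; your write-up is a complete, self-contained proof along the standard lines of the original. The forward direction is the easy one, and your handling of the converse is sound: the reduction to a nonconstant $h \in D[x]$ with $h(d) \mid c$ for all $d \in D$ (via the Bézout relation $Pg + Qh = c$ over $K[x]$ with denominators cleared), followed by the two substitutions $x \mapsto cx$ and $x \mapsto bx$ to normalize the constant term and then force $f(d) \mid b \mid f(d) - 1$, hence $f(d) \in D^\times$, is exactly the right mechanism, and each divisibility and degree claim checks out (in particular $h(0) \neq 0$ since $\varphi$ has no pole in $D$, and $b \neq 0$ since $c \neq 0$).
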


In general, the ring $\IntR(D)$ is not the localization of $\Int(D)$ by unit-valued polynomials. The following is an example of a domain $D$ such that $\IntR(D) \neq T^{-1}\Int(D)$. 

\begin{example}\label{Ex:NotJustLocalizationByUVPolynomials}
	Let $V$ be a valuation domain with an infinite residue field and a principal maximal ideal, generated by some $t \in V$. Then we claim that $\varphi(x) \coloneqq \frac{t}{x^2+t} \in \IntR(V) \setminus T^{-1}\Int(V)$. 
	
	Let $v$ be the valuation associated with $V$. Take $d \in V$. If $v(d) = 0$, then $v(\varphi(d)) = v(t) \geq 0$. If $v(d) > 0$, then $v(\varphi(d)) = 0$. Thus, $\varphi \in \IntR(V)$. 
	
	Now suppose that $\varphi = \frac{f}{g}$, where $f \in \Int(V)$ and $g \in T$. Since $V$ is local with infinite residue field, we have $\Int(V) = V[x]$. Then we obtain
	\[
		g = \frac{x^2 + t}{t} \cdot f.
	\]
	Let $d \in V$ such that $v(d) = 0$. Evaluate at $x=d$ to get $g(d) = \frac{d^2+t}{t} \cdot f(d)$. We see that $v(f(d)) = v(t)$, so $f(d) = 0 \mod (t)$. Since $V/(t)$ is infinite, we must have that $f(x) \mod (t)$ is the zero polynomial. However, evaluating $x = 0$, we get that $g(0) = f(0)$, so $f(0)$ is a unit and $f(x) \mod (t)$ cannot be the zero polynomial, a contradiction. Thus, $\varphi \notin T^{-1}\Int(V)$. 
\end{example}

We can study the structure of $\IntR(E, D)$ via its ideals. In particular, we can analyze its prime and maximal ideals. Since $\IntR(E,D)$ consists of functions valued in $D$, we can define some of the ideals of $\IntR(E,D)$ via ideals of $D$. We call these pointed ideals. 

\begin{definition}
	Let $D$ be a domain with field of fractions $K$. Take $E$ to be a subset of $K$. Also let $I$ be an ideal of $D$ and $a \in E$. Then define
		\[
	\mathfrak{I}_{I, a} = \{\varphi \in \IntR(E,D) \mid \varphi(a) \in I \}.
	\]
	Ideals of $\IntR(E,D)$ this form are called \textbf{pointed ideals}. If $I$ is a prime ideal $\p$ of $D$, we use the notation $\mathfrak{P}_{\p,a} = \mathfrak{I}_{\p,a}$ and call these \textbf{pointed prime ideal}. 
	
	If $\m$ is a maximal ideal of $D$, then we use the notation $\M_{\m, a}$ for $\mathfrak{P}_{\m, a}$. We call ideals of $\IntR(E,D)$ of this form \textbf{pointed maximal ideals}. 
\end{definition}

\begin{remark}
	The notation $\mathfrak{P}_{I, a}$ does not indicate the ring $D$ and subset $E$, so $D$ and $E$ are understood from context.
	
	Note that if $\p$ is a prime ideal of $D$, then $\mathfrak{P}_{\p, a}$ is a prime ideal of $\IntR(E, D)$, so it is justified to call $\mathfrak{P}_{\p, a}$ a pointed prime ideal. Moreover, the pointed maximal ideal $\M_{\m, a}$ is indeed a maximal ideal of $\IntR(E,D)$. 
\end{remark}

\begin{proposition}
	Let $D$ be a domain and $E$ a subset of the field of fractions. If $\p$ is a prime ideal of $D$, then for any $a \in E$, we have $\IntR(E,D)/\mathfrak{P}_{\p, \alpha} \cong D/\p$. 
\end{proposition}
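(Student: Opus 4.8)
The plan is to realize $\mathfrak{P}_{\p,a}$ as the kernel of a natural surjection onto $D/\p$ and then invoke the first isomorphism theorem. Write $\mathcal{O}_a \subseteq K(x)$ for the local ring of rational functions defined at $a$ (the localization of $K[x]$ at the maximal ideal $(x-a)$), and let $\mathrm{ev}_a \colon \mathcal{O}_a \to K$ be the residue map, i.e.\ evaluation at $a$. Every $\varphi \in \IntR(E,D)$ lies in $\mathcal{O}_a$: since $a \in E$, the requirement $\varphi(a) \in D$ from the definition of $\IntR(E,D)$ already forces $\varphi$ to have no pole at $a$. Moreover $\mathrm{ev}_a(\varphi) = \varphi(a) \in D$ for all such $\varphi$, so the restriction of $\mathrm{ev}_a$ is a ring homomorphism $\IntR(E,D) \to D$.

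Next I would compose this restricted evaluation map with the canonical projection $D \twoheadrightarrow D/\p$ to obtain a ring homomorphism $\pi \colon \IntR(E,D) \to D/\p$ given by $\varphi \mapsto \varphi(a) + \p$. This $\pi$ is surjective because the constant functions $D \subseteq \IntR(E,D)$ already map onto $D/\p$ under $\pi$ (the constant $d$ goes to $d + \p$, and $D \to D/\p$ is onto). By construction, $\varphi \in \ker \pi$ if and only if $\varphi(a) \in \p$, which is exactly the condition defining $\mathfrak{P}_{\p,a}$; hence $\ker \pi = \mathfrak{P}_{\p,a}$. The first isomorphism theorem then yields $\IntR(E,D)/\mathfrak{P}_{\p,a} \cong D/\p$.

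The only point that needs genuine care is the claim that every element of $\IntR(E,D)$ is defined at $a$ and that evaluation there is a ring homomorphism; both are absorbed into the single observation that $\IntR(E,D) \subseteq \mathcal{O}_a$ and that the residue map of a local ring is a ring homomorphism. Everything else is purely formal. (As a byproduct, since $D/\p$ is a domain this re-confirms the earlier remark that $\mathfrak{P}_{\p,a}$ is a prime ideal, and it is maximal precisely when $\p$ is maximal, justifying the notation $\M_{\m,a}$.)
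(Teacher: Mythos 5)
Your proof is correct and follows essentially the same route as the paper: evaluate at $a$, reduce modulo $\p$, note surjectivity via the constants, identify the kernel as $\mathfrak{P}_{\p,a}$, and apply the first isomorphism theorem. Your extra observation that $\IntR(E,D)\subseteq\mathcal{O}_a$ (so evaluation at $a$ is genuinely a ring homomorphism) is a point the paper leaves implicit, but it does not change the argument.
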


\begin{proof}
	Consider the map $\IntR(E,D) \to D/\p$ given by $\varphi \mapsto \varphi(a) \mod \p$. This map is surjective since the constant functions are in $\IntR(E,D)$. Furthermore, the kernel of this map are rational functions in $\IntR(E,D)$ such that their evaluation at $a$ modulo $\p$ is 0, so the kernel is exactly $\mathfrak{P}_{\p, \alpha}$. Thus, $\IntR(E,D)/\mathfrak{P}_{\p, \alpha} \cong D/\p$. 
\end{proof}

\begin{remark}
	In particular, for any maximal ideal $\m$ of $D$, we have $\IntR(E,D)/\mathfrak{M}_{\m, a} \cong D/\m$, implying that $\mathfrak{M}_{\m, a}$ is a maximal ideal in $\IntR(E,D)$. 
\end{remark}

 In general, the pointed ideals of $\IntR(E,D)$ are not sufficient to describe all of the ideals of $\IntR(E,D)$. Also, in general, the pointed prime ideals do not describe all of the prime ideals either, and even the pointed maximal ideals do not describe all of the maximal ideals in general. However, we can use ultrafilters to describe more of the ideals. We first give the definition of an ultrafilter. 

\begin{definition}
	Let $S$ be a set. An \textbf{ultrafilter} $\mathcal{U}$ on $S$ is a set of subsets of $S$ such that
	\begin{itemize}
		\item $\emptyset \notin \mathcal{U}$,
		\item if $A \subseteq B \subseteq S$ and $A \in \mathcal{U}$, then $B \in \mathcal{U}$,
		\item if $A, B \in \mathcal{U}$, then $A \cap B \in \mathcal{U}$,
		\item if $A \subseteq S$, then $A \in \mathcal{U}$ or $S \setminus A \in \mathcal{U}$. 
	\end{itemize}
	
	Fix an element $s \in S$. The collection of all subsets of $S$ containing $s$ is an ultrafilter. We call ultrafilters of this form \textbf{principal} and \textbf{non-principal} otherwise. 
\end{definition}

\begin{remark}
	If $S$ is an infinite set, there exist non-principal ultrafilters on $S$ by Zorn's Lemma.
\end{remark}

Now we use ultrafilters to take ultrafilter limits of a set of prime ideals.

We can define a prime ideal from a collection of prime ideals using ultrafilters in a more general setting. The general construction does not limit us to rings of integer-valued rational functions.

\begin{definition}
	Let $R$ be a commutative ring and $\{\p_\lambda\}_{\lambda \in \Lambda}$ a subset of $\Spec(R)$. Consider an element $a \in R$. The \textbf{characteristic set of $a$ with respect to $\{\p_\lambda\}$} is defined as
	\[
		\Phi_a \coloneqq \{\p_\lambda \mid a \in \p_\lambda \}.
	\] 
	Take $\mathcal{U}$ to be an ultrafilter of $\{\p_\lambda\}$. We define the \textbf{ultrafilter limit of $\{\p_\lambda\}$ with respect to $\mathcal{U}$} as
	\[
		\lim\limits_{\mathcal{U}} \p_\lambda \coloneqq \{a \in R \mid \Phi_a \in \mathcal{U} \}. 
	\]
\end{definition}

\begin{remark}
	Since any ultrafilter on $\{\p_\lambda\}$ can be extended uniquely to an ultrafilter on $\Spec(R)$ containing $\{\p_\lambda\}$ and any ultrafilter on $\Spec(R)$ containing $\{\p_\lambda\}$ can be obtained this way, we may take $\mathcal{U}$ to be an ultrafilter of $\{\p_\lambda\}$ or an ultrafilter of $\Spec(R)$ containing $\{\p_\lambda\}$. 
\end{remark}

Using the definition of an ultrafilter, we may confirm that $\lim\limits_{\mathcal{U}} \p_\lambda$ is a prime ideal. In particular, if we take $\{\p_\lambda\}$ to be a set of pointed prime ideals of $\IntR(E,D)$, then the ultrafilter limit of $\{\p_\lambda\}$ with respect a non-principal ultrafilter can yield a prime ideal of $\IntR(E,D)$ that is not a pointed prime ideal.

For rings of integer-valued polynomials, we have $S^{-1}\Int(D) \subseteq \IntR(S^{-1}D)$ for any multiplicative subset $S$ of a domain $D$ \cite[Proposition I.2.2]{Cahen}. However, for rings of integer-valued rational functions, we don't necessarily have inclusion of $S^{-1}\IntR(D)$ in $\IntR(S^{-1}D)$ for $S$ a multiplicative subset of $D$. 

\begin{example}
	Let $k$ be a field and let $K = k(s,t)$ with a valuation $v:K \to \Z \oplus \Z \cup \{\infty\}$	given by
	\[
		v\left(\sum_{i,j} a_{ij} s^i t^j \right) = \min\limits_{ i,j} \{ (i,j)\}
	\]
	for each nonzero $\sum_{i,j} a_{ij} s^i t^j \in k[s,t]$, where each $a_{ij} \in k \setminus \{0\}$, and extended uniquely to $K$. The value group is ordered lexicographically. Let $D$ be the associated valuation domain. Its prime spectrum is $(0) \subsetneq (s, s/t, s/t^2, s/t^3, \dots) \subsetneq (t)$. If $S = D \setminus (s, s/t, s/t^2, s/t^3, \dots) $, then $\frac{1/t}{x-1/t} \in \IntR(D)$ but $\frac{1/t}{x-1/t} \notin \IntR(S^{-1}D)$. 
\end{example}

In Section 2, we discuss rings of integer-valued rational functions over valuation domains. For a valuation domain $V$, we completely determine when $\IntR(V)$ is a Prüfer domain. We also completely determine when $\IntR(V)$ is a Bézout domain. When $\IntR(V)$ is not a Prüfer domain, we determine prime ideals that are not essential. 

In Section 3, we consider integer-valued rational functions over a Prüfer domain $D$. We give some conditions when $\IntR(D)$ is not Prüfer and a family of Prüfer domains such that $\IntR(D)$ is Prüfer for each domain $D$ in this family.

\section{Integer-valued rational functions over valuation domains}

\indent\indent In this section, we let $V$ be a valuation domain, $K$ its field of fractions, $\m$ the maximal ideal of $V$, $v$ the associated valuation, and $\Gamma$ the value group. We investigate whether $\IntR(V)$ is a Prüfer domain or not. Prüfer domains are of particular interest since they possess many nice properties, such as satisfying a generalized version of the Chinese Remainder Theorem or being a generalized notion of a Dedekind domain. Prüfer domains can also be seen as a global version of a valuation domain. 

Not only are valuations powerful tools that assist in analyzing the ring of integer-valued rational function, but valuations also induce a topology that interacts well with integer-valued rational functions. The following proof is a modification of \cite[Proposition X.2.1]{Cahen}.

\begin{proposition}\label{Prop:RationalFunctionsContinuousInValuationTopology}
	Let $D$ be a domain with field of fractions $K$ and $E$ a subset of $K$. Let $v:K^\times \to \Gamma$ be a valuation such that the induced valuation ring $V$ contains $D$. Then each element of $\IntR(E,D)$ is a continuous function from $E$ to $D$ with respect to the topology induced by the valuation. 
\end{proposition}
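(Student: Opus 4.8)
The plan is to fix $\varphi \in \IntR(E,D)$, write it in lowest terms as $\varphi = f/g$ with $f,g \in K[x]$ coprime, and reduce the continuity of $\varphi$ to the (elementary) continuity of the polynomial maps $f$ and $g$, handling the denominator by means of the ultrametric inequality for $v$. The first and most important step is to verify that the reduced denominator $g$ has no root in $E$. Indeed, if $g(a) = 0$ for some $a \in E$, then, since $K[x]$ is a PID and $\gcd(f,g) = 1$, the factor $x-a$ cannot divide $f$; hence $f(a) \ne 0$ and $\varphi$ has a genuine pole at $a$, so $\varphi(a)$ is undefined, contradicting $\varphi(a) \in D$ (recall that membership in $\IntR(E,D)$ requires $\varphi$ to be pole-free on $E$). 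Thus $g(a) \ne 0$ for every $a \in E$, and in particular $v(g(a)) \in \Gamma$.

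Next I would invoke the standard fact that the valuation topology makes $K$ into a topological ring, so that $f$ and $g$, being polynomial functions $K \to K$, are continuous: for every $a \in K$ and every $\gamma \in \Gamma$ there is $\delta \in \Gamma$ such that $v(b - a) \ge \delta$ forces both $v\bigl(f(b) - f(a)\bigr) \ge \gamma$ and $v\bigl(g(b) - g(a)\bigr) \ge \gamma$. Now fix $a \in E$ and a target value $\gamma \in \Gamma$. Using continuity of $g$, first pass to a neighborhood of $a$ small enough that $v\bigl(g(b) - g(a)\bigr) > v(g(a))$; on it the ultrametric equality gives $v(g(b)) = v(g(a))$. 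On this neighborhood we have the identity
\[
\varphi(b) - \varphi(a) \;=\; \frac{g(a)\bigl(f(b) - f(a)\bigr) \;-\; f(a)\bigl(g(b) - g(a)\bigr)}{g(a)\,g(b)},
\]
whose denominator has valuation $2\,v(g(a))$. Applying the ultrametric inequality to the numerator and then using continuity of $f$ and $g$ to make $v\bigl(f(b)-f(a)\bigr)$ and $v\bigl(g(b)-g(a)\bigr)$ as large as desired (the case $f(a) = 0$ only helps), one forces $v\bigl(\varphi(b) - \varphi(a)\bigr) \ge \gamma$. Intersecting the finitely many neighborhoods produced above gives a single neighborhood of $a$ on which this holds, so $\varphi$ is continuous at $a$ as a map into $K$, and hence into $D$ with the subspace topology since $\varphi(E) \subseteq D$.

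The only genuinely delicate point is the first step: one must use the hypothesis $\varphi(E) \subseteq D$, through the convention that integer-valued rational functions are pole-free on $E$, to rule out the reduced denominator vanishing on $E$. Once that is in hand, what remains is the familiar fact that a rational function is continuous at every point where it is defined, carried out with the valuation $v$ in place of an absolute value. Note that neither Hausdorffness nor completeness of the valuation topology is needed, and the trivial-valuation case is automatically covered, the topology then being discrete.
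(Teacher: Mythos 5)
Your proof is correct and follows essentially the same route as the paper: write $\varphi = f/g$ in lowest terms, note $g$ cannot vanish on $E$, and run the standard ultrametric difference-quotient estimate using continuity of the polynomials $f$ and $g$. The only (cosmetic) difference is in handling the denominator: you pin down $v(g(b)) = v(g(a))$ on a small neighborhood via the ultrametric equality, whereas the paper sidesteps controlling $v(g(b))$ altogether by writing the second term as $\varphi(b)\cdot\frac{g(b)-g(a)}{g(a)}$ and using $\varphi(b) \in D \subseteq V$; your variant has the minor advantage of proving the more general fact that any rational function pole-free on $E$ is continuous there.
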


\begin{proof}
	Let $\varphi \in \IntR(E,D)$. We can write $\varphi = \frac{f}{g}$ for $f, g \in K[x]$ relatively prime polynomials. Fix $a \in E$, and let $b \in E$ and $\varepsilon \in \Gamma$. We calculate
	\[
	\varphi(b) - \varphi(a) = \frac{f(b)}{g(b)} - \frac{f(a)}{g(a)} = \frac{f(b)-f(a)}{g(a)} - \frac{f(b)}{g(b)}\cdot \frac{g(b)-g(a)}{g(a)}. 
	\]
	Since $g(a) \neq 0$, we can say that $v(g(a)) = \gamma$ for some $\gamma \in \Gamma$. Let $\delta \in \Gamma$. Since $f$ and $g$ are continuous with respect to the topology induced by the valuation, there is some $\delta \in \Gamma$ such that $v(a-b) > \delta$ implies $v(f(a)-f(b)), v(g(a)-g(b)) > \varepsilon + \gamma$.
	
	Now, we see that if $v(a-b) > \delta$, then we have $v(\varphi(b) - \varphi(a)) \geq \min\left\{v\left(\frac{f(b)-f(a)}{g(a)}\right), v\left(\frac{f(b)}{g(b)}\cdot \frac{g(b)-g(a)}{g(a)}\right)\right\}$. We know that $v\left(\frac{f(b)-f(a)}{g(a)}\right) > \varepsilon + \gamma - \gamma = \varepsilon$ and $v\left(\frac{f(b)}{g(b)}\cdot \frac{g(b)-g(a)}{g(a)}\right) = v\left(\frac{f(b)}{g(b)}\right) + v\left(\frac{g(b)-g(a)}{g(a)}\right) > 0 + \varepsilon + \gamma - \gamma = \varepsilon$ since $\frac{f(b)}{g(b)} \in D \subseteq V$. Thus, $v(\varphi(b) - \varphi(a)) > \varepsilon$, showing that $\varphi$ is continuous at $a$, which means that $\varphi$ is continuous since $a \in E$ was arbitrarily chosen. 
\end{proof}

For studying rings of integer-valued rational functions, there are certain Prüfer domains that are of particular interest to us. The following definitions are from \cite{IntValuedRational}:
\begin{definition}
	A Prüfer domain $D$ is \textbf{monic} if there is a monic unit-valued polynomial in $D$. 
\end{definition}

\begin{definition}
	Let $D$ be a Prüfer domain. The Prüfer domain $D$ is \textbf{singular} if there exists a family $\Lambda$ of maximal ideals of $D$ such that 
	\begin{itemize}
		\item $D = \bigcap\limits_{\m \in \Lambda} D_\m$,
		\item for each $\m \in \Lambda$, the maximal ideal of $D_\m$ is a principal ideal, generated by some $t_\m \in D_\m$, and
		\item there is an element $t \in D$ and an integer $n$ such that, for each $\m \in \Lambda$, $0< v_\m(t) < n v_\m(t_\m)$, where $v_\m$ is the valuation associated with $D_\m$. 
	\end{itemize}
\end{definition}

An important invariant of a Prüfer domain $D$ is the \textbf{Picard group} of $D$, denoted as $\Pic(D)$. The Picard group is defined to be the set of finitely generated ideals of $D$ modulo the principal ideals of $D$. The group operation is ideal multiplication.

The following result is stated only for $\IntR(D)$ in \cite{PruferNonDRings}, but the same proof can be used to get the same statement about $\IntR(E,D)$.

\begin{theorem}\cite{PruferNonDRings}
	Let $D$ be a monic Prüfer domain with $E$ a subset of $K$, the quotient field of $D$. Then $\IntR(E,D)$ is a Prüfer domain with torsion Picard group.
\end{theorem}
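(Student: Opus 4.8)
The plan is to reduce the assertion about $\IntR(E,D)$ to the already-established assertion about $\IntR(D)$, checking that the proof in \cite{PruferNonDRings} never uses the specific domain of evaluation in an essential way. First I would recall the strategy behind the $\IntR(D)$ case: one fixes a monic unit-valued polynomial $f\in D[x]$, observes that $\frac1f\in\IntR(D)$, and uses $f$ to show that $\IntR(D)$ is integrally closed and locally a valuation domain, with the Picard group controlled by the Picard group of $D$ (which, $D$ being monic Prüfer, contributes only torsion). Concretely I expect the argument to factor through two facts: (i) every finitely generated ideal of $\IntR(E,D)$ becomes principal after passing to a suitable overring or after multiplying by a power, and (ii) the Prüfer property is checked prime-locally, so it suffices to understand the localizations of $\IntR(E,D)$ at its prime ideals, which by the earlier discussion of pointed primes and ultrafilter limits are governed by localizations of $D$.

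The key steps, in order, would be: (1) State precisely the proof in \cite{PruferNonDRings} for $\IntR(D)$ and isolate each place where "$D$" appears as the set on which rational functions are evaluated, as opposed to the set in which they take values. (2) Replace the evaluation set $D$ by $E$ throughout, verifying at each step that the argument still goes through — in particular that a monic unit-valued polynomial $f$ over $D$ still satisfies $\frac1f\in\IntR(E,D)$ (immediate, since $f(E)\subseteq f(K)$ and $f$ is unit-valued, one only needs $f$ to avoid every maximal ideal of $D$, which is the content of the Corollary above), and that the description of prime ideals via pointed primes $\mathfrak P_{\mathfrak p,a}$ with $a\in E$ and their ultrafilter limits is unaffected by enlarging or shrinking the evaluation set. (3) Conclude that $\IntR(E,D)$ is Prüfer with torsion Picard group by the same computation, noting that $\Pic$ of the relevant localizations still injects into (a subquotient of) $\Pic(D)$, which is torsion because $D$ is monic Prüfer.

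The main obstacle will be step (2): making sure that no step of the original proof secretly exploits an algebraic closure property of the evaluation set — for instance that $D$ is a ring, or that $D$ is the whole base domain, in a way that $E$ (merely a subset of $K$) does not share. I would pay special attention to any use of the substitution trick $\psi_b(x)=\varphi(x/b)$ (as in the Proposition proved above) or of composition stability of $\IntR(D)$, since those genuinely use that the evaluation set is closed under the relevant operations; if the \cite{PruferNonDRings} proof uses such a trick, I would need to check whether $\IntR(K,D)$-style reasoning can be substituted, or whether the conclusion for general $E$ follows from the conclusion for $E=K$ together with the fact that $\IntR(E,D)\supseteq\IntR(K,D)$ is an overring (and overrings of Prüfer domains are Prüfer, with Picard group a quotient of the original). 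In the likely event that the original argument is "pointwise" — building fractional ideals and their inverses one evaluation point $a$ at a time using $D_{\mathfrak p}$ — the passage to arbitrary $E$ is genuinely routine and amounts to the remark already made in the excerpt that "the same proof can be used."
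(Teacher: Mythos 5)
The paper itself gives no proof of this statement: it is quoted from \cite{PruferNonDRings} together with the one-line remark that ``the same proof can be used'' with $D$ replaced by $E$ as the evaluation set. Your proposal is, in substance, that same remark expanded into a verification plan, so there is nothing to contradict; but your guess at \emph{how} the cited proof works is not the right mechanism, and this matters for deciding whether the generalization to $E$ is really routine. The argument does not proceed by showing $\IntR(D)$ is locally a valuation domain and controlling $\Pic$ via $\Pic(D)$. As one can see from the paper's own Theorem \ref{Thm:MonicSingular} (which generalizes the result and exhibits the template), the proof is direct: with $f$ monic unit-valued of degree $n$, one writes down, for $\varphi,\psi\in\IntR(E,D)$, an explicit element of $(\varphi,\psi)^n$ (essentially $\psi^n f(\varphi/\psi)$) and checks pointwise, at each $a\in E$ and each essential valuation $v$ of $D$, that its value has valuation $\min\{n\,v(\varphi(a)),\,n\,v(\psi(a))\}$, whence $(\varphi,\psi)^n$ is principal; iterating gives that $(\varphi_1,\dots,\varphi_m)^{n^{m-1}}$ is principal, which yields both the Pr\"ufer property and torsion of the Picard group in one stroke, with no appeal to $\Pic(D)$ or to a classification of primes of $\IntR(E,D)$. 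Because the verification is pointwise in $a$ and never uses that the evaluation set is a ring, replacing $D$ by an arbitrary $E\subseteq K$ genuinely costs nothing --- this is the content of the paper's remark, and your instinct that a ``pointwise'' argument would make the passage routine is correct.

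One local justification in your step (2) is wrong as stated, and it is exactly the point where monicity earns its keep: that $f$ is unit-valued on $D$ says nothing a priori about $f(a)$ for $a\in E\setminus D$, so ``$\frac1f\in\IntR(E,D)$ is immediate since $f$ is unit-valued'' does not follow from avoiding the maximal ideals. The correct statement is that for every essential valuation $v$ of a monic Pr\"ufer domain one has $v(f(a))=\min\{0,\,n\,v(a)\}$ for \emph{all} $a\in K$: when $v(a)\ge 0$ this is unit-valuedness, and when $v(a)<0$ it is the monic leading term dominating. This identity is what makes the key valuation computation insensitive to whether $a$ ranges over $D$ or over $E$, and it is the real content behind the paper's assertion that the proof transfers verbatim.
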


 For a Prüfer domain $D,$ the Picard group can be seen as a way to measure how far $D$ is from being a Bézout domain, since a Bézout domain has trivial Picard group. This next result shows that rings of integer-valued rational functions over singular Prüfer domains are Bézout domains, which implies that they are Prüfer domains as well.

\begin{theorem}\cite{IntValuedRational}
	Let $D$ be a singular Prüfer domain with $E$ a subset of $K$, the quotient field of $D$. Then $\IntR(E,D)$ is a Bézout domain.
\end{theorem}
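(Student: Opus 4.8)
The plan is to prove directly that every finitely generated ideal of $R:=\IntR(E,D)$ is principal, which is the definition of a Bézout domain; in particular there is no need to establish the Prüfer property separately, and in fact it would not help to route through the monic Prüfer domain theorem, since a singular Prüfer domain need not be monic (a discrete valuation domain with algebraically closed residue field is singular but admits no monic unit-valued polynomial). By the standard induction on the number of generators it suffices to show that every two-generated ideal $(\varphi,\psi)$ with $\varphi,\psi\in R$ is principal, so the task reduces to exhibiting $\theta\in R$ with $(\varphi,\psi)=(\theta)$. Since $\theta$ will be produced as an $R$-linear combination of $\varphi$ and $\psi$, the containment $(\theta)\subseteq(\varphi,\psi)$ is automatic, and what remains is to check that $\theta$ divides both $\varphi$ and $\psi$ in $R$, i.e.\ that $\varphi/\theta,\psi/\theta\in R$.

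The engine of the construction is a family of rational functions built from the singular data: an element $t\in D$, an integer $n>0$, and a family $\Lambda$ of maximal ideals with $D=\bigcap_{\m\in\Lambda}D_\m$, each $D_\m$ having principal maximal ideal $(t_\m)$, and $0<v_\m(t)<n\,v_\m(t_\m)$ for every $\m\in\Lambda$. Generalizing Example~\ref{Ex:NotJustLocalizationByUVPolynomials}, for $c\in K$ put $\rho_c(x)=\dfrac{t}{(x-c)^n+t}$. A case split on the sign of $v_\m(d-c)$ --- using that $v_\m(d-c)>0$ forces $v_\m((d-c)^n)\ge n\,v_\m(t_\m)>v_\m(t)$ --- shows that $\rho_c\in R$ (ignoring its finitely many poles, or choosing $c$ to avoid them on $E$), that $1-\rho_c=\dfrac{(x-c)^n}{(x-c)^n+t}$ likewise lies in $R$, and that at each $\m\in\Lambda$ and each point $d$, exactly one of $\rho_c(d),(1-\rho_c)(d)$ is a unit of $D_\m$ while the other lies in $\m D_\m$, with $\rho_c(d)\in D_\m^{\times}$ precisely when $v_\m(d-c)>0$. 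Thus $\rho_c$ and $1-\rho_c$ behave like complementary idempotents concentrated around $c$; the crucial feature --- and the entire role of the singular hypothesis --- is that the single pair $(t,n)$ achieves this uniformly at every $\m\in\Lambda$ at once.

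Using these functions, together with units of $R$, one assembles $\theta$ from $\varphi$ and $\psi$ so that near each point and at each $\m\in\Lambda$ the combination reproduces whichever of $\varphi,\psi$ has the smaller $v_\m$-value. Verifying that such a $\theta$ works comes down to checking, for every $e\in E$ and every $\m\in\Lambda$, the single valuation inequality $v_\m(\theta(e))\le\min\{v_\m(\varphi(e)),v_\m(\psi(e))\}$ together with the absence of new poles of $\varphi/\theta$ and $\psi/\theta$ on $E$; this is precisely what it means for $\varphi/\theta$ and $\psi/\theta$ to lie in $R$, so then $(\varphi,\psi)=(\theta)$.

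I expect the main obstacle to be the design of $\theta$: a single function must satisfy the above inequality simultaneously at all $\m\in\Lambda$ and all points of $E$, and it is exactly the uniformity of $(t,n)$ across $\Lambda$ provided by singularity that allows one combination of the $\rho_c$ to be adequate everywhere at once. Without it --- for instance under only a monic hypothesis --- one obtains at most that $R$ is Prüfer with torsion, not necessarily trivial, Picard group. By comparison, the reduction to two generators and the closing valuation bookkeeping are routine.
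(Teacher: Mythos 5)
Your framework is right (reduce to two generators; exhibit $\theta\in(\varphi,\psi)$ with $v_\m(\theta(e))=\min\{v_\m(\varphi(e)),v_\m(\psi(e))\}$ for all $e\in E$ and all $\m\in\Lambda$, then conclude $\varphi/\theta,\psi/\theta\in\IntR(E,D)$ from $D=\bigcap_\m D_\m$), and your analysis of the functions $\rho_c=\frac{t}{(x-c)^n+t}$ and $1-\rho_c$ is correct. But the proof has a genuine gap exactly where you flag "the main obstacle": the generator $\theta$ is never constructed, and the gadgets you set up are aimed at the wrong variable. Your $\rho_c$ detects whether the \emph{argument} $x$ is $v_\m$-close to a fixed point $c\in K$, whereas what must be detected, point by point and valuation by valuation, is whether $v_\m(\varphi(e))\le v_\m(\psi(e))$. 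The locus $\{e\in E: v_\m(\varphi(e))\le v_\m(\psi(e))\}$ varies with $\m$ and is not in general a finite union of "balls" around points of $K$ (already for $D=\Z_{(2)}\cap\Z_{(3)}$, $t=6$, $n=2$, the relevant loci for $\m=(2)$ and $\m=(3)$ are different subsets of $D$), so no finite assembly of point-centered bumps $\rho_c$ can serve for arbitrary $\varphi,\psi$.

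The missing idea is to compose the singularity gadget with the \emph{ratio} $\varphi/\psi$ rather than with $x-c$: substituting $y=\varphi/\psi$ into $\frac{y^n}{y^n+t}$ and $\frac{t}{y^n+t}$ gives the complementary pair $\frac{\varphi^n}{\varphi^n+t\psi^n}$ and $\frac{t\psi^n}{\varphi^n+t\psi^n}$, which at each $e$ and each $\m$ select whichever of $\varphi(e),\psi(e)$ has smaller $v_\m$-value (the case $v_\m(\varphi(e))>v_\m(\psi(e))$ forces $v_\m(\varphi(e)/\psi(e))\ge v_\m(t_\m)$, whence $n\,v_\m(\varphi(e)/\psi(e))>v_\m(t)$ --- this is where singularity enters, uniformly in $\m$). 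One then takes
\[
\theta \;=\; \varphi\cdot\frac{\varphi^{n}}{\varphi^{n}+t\psi^{n}}\;+\;\psi\cdot\frac{t\psi^{n}}{\varphi^{n}+t\psi^{n}}\;=\;\frac{\varphi^{n+1}+t\psi^{n+1}}{\varphi^{n}+t\psi^{n}},
\]
and the two-case valuation computation shows $v_\m(\theta(e))=\min\{v_\m(\varphi(e)),v_\m(\psi(e))\}$, with only routine bookkeeping left for common zeros of $\varphi$ and $\psi$ on $E$. This "evaluate the gadget at $\varphi/\psi$" step is the same mechanism used in the proof of Theorem \ref{Thm:MonicSingular} in this paper, where $\theta=df/g$ is evaluated at $\varphi/\psi$ to build the $\diamond$ operation; without it your argument does not close. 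Your side remarks (that singular does not imply monic, and that the monic route only yields torsion Picard group) are correct.
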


We take the results of \cite{PruferNonDRings, IntValuedRational} and restrict ourselves to looking at rings of integer-vauled rational functions over valuation rings. 

\begin{remark}
	Note that a valuation domain is monic if and only if its residue field is not algebraically closed, and a valuation domain is singular if and only if its maximal ideal is principal.
\end{remark}

\begin{corollary}
	If $V$ is a valuation domain with a residue field that is not algebraically closed or a principal maximal ideal, then $\IntR(V)$ is a Prüfer domain.
\end{corollary}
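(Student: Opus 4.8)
The plan is to derive the statement directly from the preceding Remark together with the two quoted theorems (of \cite{PruferNonDRings} and \cite{IntValuedRational}), so the only thing that actually requires an argument is the Remark itself, i.e.\ the two equivalences it asserts: a valuation domain $V$ is monic if and only if its residue field $V/\m$ is not algebraically closed, and $V$ is singular if and only if $\m$ is principal. Once these are in hand the Corollary is immediate, since a valuation domain is in particular a Prüfer domain.

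For the first equivalence I would invoke Proposition \ref{Prop:UVNoRoots}: since $V$ is local, $f \in V[x]$ is unit-valued precisely when $f \bmod \m$ has no root in the residue field $k := V/\m$. If $k$ is not algebraically closed, choose a nonconstant $\bar{f} \in k[x]$ with no root in $k$; because $k$ is a field we may rescale so that $\bar{f}$ is monic, and then lift its coefficients to obtain a monic $f \in V[x]$ with $f \bmod \m = \bar{f}$. This $f$ is a monic unit-valued polynomial, so $V$ is monic. Conversely, a monic unit-valued $f$ reduces to a monic, hence nonconstant, polynomial over $k$ with no root in $k$, so $k$ is not algebraically closed. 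For the second equivalence: if $\m = (t)$ is principal, take the singleton family $\Lambda = \{\m\}$; then $V = V_\m$, the maximal ideal of $V_\m$ is principal, and the element $t$ together with $n = 2$ satisfies $0 < v(t) < 2v(t)$, using that $v(t) > 0$, so $V$ is singular. (If instead $\m = (0)$ then $V$ is a field and $\IntR(V)$ is a localization of $V[x]$ at a multiplicative set of polynomials, hence a PID and in particular Prüfer, so the Corollary holds trivially; alternatively one simply assumes $V$ is not a field.) Conversely, if $V$ is singular then, $V$ being local, the witnessing family must be $\{\m\}$, and the second bullet of the definition of singular forces $\m$ to be principal.

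Assembling everything: if $V/\m$ is not algebraically closed then $V$ is a monic Prüfer domain, so by the theorem of \cite{PruferNonDRings} the ring $\IntR(V)$ is a Prüfer domain; if $\m$ is principal then $V$ is a singular Prüfer domain, so by the theorem of \cite{IntValuedRational} the ring $\IntR(V)$ is a Bézout domain, and hence a Prüfer domain. I do not expect a genuine obstacle here; the one point that calls for a little care is passing from a root-free polynomial over $k$ to an actually \emph{monic} unit-valued polynomial over $V$ — which is precisely where one uses that $k$ is a field — together with not overlooking the degenerate case in which $V$ is a field.
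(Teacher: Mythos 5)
Your proposal is correct and matches the paper's route exactly: the paper derives this corollary immediately from the preceding Remark (monic $\Leftrightarrow$ residue field not algebraically closed, singular $\Leftrightarrow$ principal maximal ideal) together with the two quoted theorems of \cite{PruferNonDRings} and \cite{IntValuedRational}, offering no further proof. Your verification of the Remark's two equivalences via Proposition \ref{Prop:UVNoRoots} and the definition of singularity is sound and simply fills in details the paper leaves implicit.
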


In this section, we will explore the converse. To that end, we need a few definitions.

\begin{definition}
	Let $\Gamma$ be an abelian group with a total order on its elements. We say that $\Gamma$ is a \textbf{totally ordered abelian group} if for $\alpha, \beta, \gamma, \delta \in \Gamma$, $\alpha\leq \beta$ and $\gamma\leq \delta$ imply that $\alpha+\gamma \leq \beta+\delta$.

\end{definition}

\begin{definition}
	A totally ordered abelian group $\Gamma$ is \textbf{divisible} if for all $\gamma \in \Gamma$ and nonzero $n \in \Z$, there exists $\delta \in \Gamma$ such that $n\delta = \gamma$. 
\end{definition}

\begin{definition}
	Let $\Gamma$ be a totally ordered abelian group. We can define its \textbf{divisible hull} \[\Q\Gamma = \left\{ \frac{\gamma}{n} \mid \gamma \in \Gamma, n \in \Z_{> 0} \right\}/\sim,\] where $\frac{\gamma}{n} \sim \frac{\gamma'}{m}$ if $m\gamma = n\gamma'$ in $\Gamma$. Then define the group operation to be $\frac{\gamma}{n} + \frac{\gamma'}{m} = \frac{m\gamma + n\gamma'}{nm}$ and the ordering to be $\frac{\gamma}{n} \leq \frac{\gamma'}{m}$ if and only if $m\gamma \leq n\gamma'$. Equivalently, we may define  $\Q\Gamma = \Gamma \otimes_\Z \Q$.

	Furthermore, if $v:K \to \Gamma \cup \{\infty \}$ is a valuation, we can extend the valuation to $\tilde{v}: K(t_\gamma \mid \gamma \in \Q\Gamma) \to \Q\Gamma\cup \{\infty \}$ defined as the monomial extension mapping $t_\gamma$ to $\gamma$ for every $\gamma \in \Q\Gamma$. More explicitly, for nonzero elements of $K[t_\gamma \mid \gamma \in \Q\Gamma]$, we define $\tilde{v}$ to be
	\[
		\tilde{v}\left( \sum\limits_{i_1, \dots, i_n} a_{i_1\cdots i_n} t_{\gamma_1}^{c_{1,i_1\cdots i_n}} \cdots t_{\gamma_n}^{c_{n, i_1\cdots i_n}}  \right) = \min\left\{ v(a_{i_1\dots i_n}) + \sum_{k=1}^n c_{k, i_1\cdots i_n} \gamma_k \right\},
	\]
	where each $a_{i_1\cdots i_n} \in K$. Then this extends uniquely to $K(t_\gamma \mid \gamma \in \Q\Gamma)$.

\end{definition}

	\begin{remark}
	The divisible hull $\Q\Gamma$ of $\Gamma$ is divisible and extends the ordering on $\Gamma$.
\end{remark}

\begin{remark}
	If $\m$ is principal, then the value group $\Gamma$ is not divisible. Say $\m = (\varpi)$ for some $\varpi \in V$. There does not exist an element $a \in V$ such that $2v(a) = v(\varpi)$. 
\end{remark}

In order to consider many valuations at once, we introduce the notions of minimum valuation functions and local polynomials. The minimal valuation function is closely related to monomial valuations. 

\begin{definition}
	Take a nonzero polynomial $f \in K[x]$ and write it as $f(x) = a_nx^n + \cdots + a_1x+ a_0$ for $a_0, a_1, \dots, a_n \in K$. We define the \textbf{minimum valuation function of $f$} as $\minval_{f,v}: \Gamma \to \Gamma $ by \[\gamma \mapsto \min\{v(a_0), v(a_1)+\gamma, v(a_2)+2\gamma, \dots, v(a_n) + n\gamma\}\] for each $\gamma \in \Gamma$. We will denote $\minval_{f,v}$ as $\minval_f$ if the valuation $v$ is clear from context. It is oftentimes helpful to think of $\minval_f$ as a function from $\Q{\Gamma}$ to $\Q{\Gamma}$ defined as $\gamma \mapsto \min\{v(a_0), v(a_1)+\gamma, v(a_2)+2\gamma, \dots, v(a_n) + n\gamma\}$ for each $\gamma \in \Q{\Gamma}$.

	In the same setup, taking $t \in K$, we can define the \textbf{local polynomial of $f$ at $t$} to be \[\loc_{f, v, t}(x) = \frac{f(tx)}{a_dt^d} \mod \m,\] where  $d = \max\{i \in \{0, 1, \dots, n\} \mid v(a_i) + iv(t) = \minval_f(v(t)) \}$. Again, we may omit the valuation $v$ in $\loc_{f, v, t}(x)$ and write $\loc_{f, t}(x)$ if the valuation is clear from the context. A priori, we do not know if the coefficients of $\frac{f(tx)}{a_dt^d}$ are in $V$. We need to confirm this so that the local polynomial is well-defined.
	
\end{definition}

\begin{remark}
	Using the notation above, we compare the minimum valuation function to monomial valuations. In Definition 3.3 of \cite{Peruginelli}, the monomial valuation (centered at 0) $v_{0, \gamma}$ is defined to be $v_{0, \gamma}(f) = \min\{v(a_i) + i\gamma \mid i = 0, \dots, n\}$. This shows that $\minval_f(\gamma) = v_{0, \gamma}(f)$, so we can think of the minimum valuation function as ranging over various monomial valuations. 
\end{remark}

\begin{proposition}\label{Prop:LocalPolys}
	Let $f \in K[x]$ be some nonzero polynomial and $t \in K$. Write $f(x) = a_nx^n + \cdots + a_1x+ a_0$ for $a_0, a_1, \dots, a_n \in K$. Then the local polynomial of $f$ at $t$ is a well-defined monic polynomial in $V/\m[x]$ of degree $d$, where
	\[
	d \coloneqq\max\{i \in \{0, 1, \dots, n\} \mid v(a_it^i) = v(a_i) + iv(t) = \minval_f(v(t)) \}.
	\]
	Moreover, if $i_1 < \cdots < i_s$ are the indices $i$ such that $\minval_f(v(t)) = v(a_i) + iv(t)$, then
	\[
	\loc_{f, t}(x) = r_{i_1} x^{i_1} + \cdots + r_{i_{s-1}}x^{i_{s-1}} + x^{i_s}, 
	\]
	for some nonzero elements $r_{i_1}, \dots, r_{i_{s-1}} \in V/\m$. 
\end{proposition}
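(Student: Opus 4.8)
The plan is to compute the rational function $\frac{f(tx)}{a_dt^d}$ explicitly coefficient by coefficient and read off everything from the valuations of those coefficients. Since $f(tx) = \sum_{i=0}^n a_i t^i x^i$, the coefficient of $x^i$ in $\frac{f(tx)}{a_dt^d}$ is $\frac{a_it^i}{a_dt^d}$, whose valuation is exactly $v(a_i) + iv(t) - \minval_f(v(t))$, using $v(a_d t^d) = v(a_d) + dv(t) = \minval_f(v(t))$ by the definition of $d$.

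First I would establish well-definedness: by definition of $\minval_f$, we have $v(a_i) + iv(t) \geq \minval_f(v(t))$ for every $i$, so each coefficient $\frac{a_it^i}{a_dt^d}$ lies in $V$, and reduction mod $\m$ makes sense, giving $\loc_{f,t}(x) \in V/\m[x]$. Next I would identify which coefficients survive the reduction: the coefficient of $x^i$ is a unit in $V$ (equivalently, nonzero in $V/\m$) precisely when $v(a_i) + iv(t) = \minval_f(v(t))$, i.e., exactly when $i \in \{i_1, \dots, i_s\}$; for all other $i$ (including those with $a_i = 0$) the coefficient reduces to $0$. In particular the coefficient of $x^{i_s}$ is nonzero mod $\m$, and since $d$ was chosen as the \emph{largest} index attaining the minimum, we get $i_s = d$; moreover for every $i > d$ either $a_i = 0$ or $v(a_i) + iv(t) > \minval_f(v(t))$, so no term of degree exceeding $d$ survives. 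The coefficient of $x^d$ is literally $\frac{a_dt^d}{a_dt^d} = 1$, hence reduces to $1$ in $V/\m$.

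Combining these observations: $\loc_{f,t}(x)$ has degree exactly $d$ with leading coefficient $1$, so it is monic of degree $d = i_s$; its only other nonzero coefficients occur at the indices $i_1 < \cdots < i_{s-1}$, and setting $r_{i_j} = \frac{a_{i_j}t^{i_j}}{a_dt^d} \bmod \m$ for $j = 1, \dots, s-1$ gives nonzero elements of $V/\m$ yielding precisely the displayed formula $\loc_{f,t}(x) = r_{i_1}x^{i_1} + \cdots + r_{i_{s-1}}x^{i_{s-1}} + x^{i_s}$.

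This argument is essentially a bookkeeping exercise in the valuation, so I do not anticipate a genuine obstacle; the one point requiring care is the role of the definition of $d$ as a maximum, which is exactly what guarantees that $\loc_{f,t}$ is monic (rather than merely having a unit leading coefficient among the surviving terms) and that $i_s = d$, so that the normalization by $a_dt^d$ is the right one. I would make sure to state clearly why indices $i > d$ contribute nothing, distinguishing the case $a_i = 0$ from the case $a_i \neq 0$ with strict inequality in the $\minval_f$ bound.
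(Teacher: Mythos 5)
Your proposal is correct and follows essentially the same argument as the paper: compute each coefficient $\frac{a_it^i}{a_dt^d}$ of $\frac{f(tx)}{a_dt^d}$, observe its valuation is nonnegative (giving well-definedness) and zero exactly when $v(a_i)+iv(t)=\minval_f(v(t))$, and use the maximality of $d$ to conclude the reduction mod $\m$ is monic of degree $d=i_s$. Your explicit attention to the case $a_i=0$ for $i>d$ is a minor refinement of the same bookkeeping, not a different route.
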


\begin{proof}
	We have that $f(tx) = a_0 + a_1tx + a_1t^2 x^2 + \cdots + a_n t^n x^n$. Then consider each coefficient of $\frac{f(tx)}{a_dt^d}$. We calculate that $v\left(\frac{a_it^i}{a_d t^d} \right) \geq 0$ for all $i$ since $\minval_f(v(t)) \leq v(a_it^i)$ for all $i$. This shows that $\frac{f(tx)}{a_dt^d} \in V[x]$. More specifically, $v\left(\frac{a_it^i}{a_d t^d} \right) = 0$ if and only if $v(a_it^i)  = \minval_f(v(t))$. Therefore, for $i > d$, we have $v\left(\frac{a_it^i}{a_d t^d} \right) > 0$. Moreover, the coefficient of the degree $d$ term of $\frac{f(tx)}{a_dt^d}$ is 1, so $\frac{f(tx)}{a_dt^d} \mod \m$ has degree $d$ with leading coefficient 1. 
	
	Additionally, since $v\left(\frac{a_it^i}{a_d t^d} \right) = 0$ if and only if $v(a_it^i)  = \minval_f(v(t))$, we know that $x^i$ has a nonzero coefficient in $\loc_{f, t}(x)$ exactly when $\minval_f(v(t)) = v(a_i) + iv(t)$.
\end{proof}

Now we establish the minimal valuation function as a piecewise linear function. Furthermore, we deduce that the slopes of the minimal valuation function can be obtained from the highest and lowest degree terms in certain local polynomials. 

\begin{proposition}\label{Prop:FormOfMinval}
	For a nonzero $f \in K[x]$, the function $\minval_f$ has the following form evaluated at $\gamma \in \Q\Gamma$
	\[
	\minval_f(\gamma) = \begin{cases}
		c_1 \gamma + \beta_1, & \gamma \leq \delta_1,\\
		c_2 \gamma + \beta_2, & \delta_1 \leq \gamma \leq \delta_2,\\
		\vdots\\
		c_{k-1} \gamma + \beta_{k-1}, & \delta_{k-2} \leq \gamma  \leq \delta_{k-1},\\
		c_k \gamma + \beta_k, & \delta_{k-1} \leq \gamma,
	\end{cases}
	\] 
	where $c_1, \dots, c_k \in \N$ such that $c_1 > \cdots > c_k$; $\beta_1, \dots, \beta_k \in \Gamma$; and $\delta_1, \dots, \delta_{k-1} \in \Q{\Gamma}$ such that $\delta_1 < \cdots < \delta_{k-1}$. 
	
	Moreover, suppose that $t \in K$ is such that $v(t) = \delta_i$ for some $i \in \{1, \dots, k-1\}$. Write $\loc_{f, t}(x) = r_{j_{i,t,1}}x^{j_{i,1}} + \cdots + r_{j_{i,t,s_i-1}}x^{j_{i,s_i-1}} + x^{j_{i,s_i}}$ with $j_{i,1} < \cdots < j_{i,s_i}$, where $r_{j_{i,t,1}}, \dots, r_{j_{i,t,s_i-1}} \in V/\m$ and are all nonzero. Then $c_{i+1} = j_{i, 1}$ and $c_i = j_{i, s_i}$. 
\end{proposition}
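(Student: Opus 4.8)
The plan is to view $\minval_f$ as the lower envelope of finitely many affine functions. Writing $\ell_i(\gamma) = v(a_i) + i\gamma$ for each index $i$ with $a_i \neq 0$, one has $\minval_f(\gamma) = \min_i \ell_i(\gamma)$ for all $\gamma \in \Q\Gamma$ (a term with $a_i = 0$ carries $v(a_i) = \infty$ and never matters, $f$ being nonzero). The first assertion is then the standard fact that a pointwise minimum of affine functions with distinct integer slopes is concave, piecewise affine, and has strictly decreasing slopes from left to right — but here stated over an arbitrary, possibly non-archimedean, totally ordered abelian group, which is why the breakpoints, being solutions of equations of the form $(i-j)\gamma = v(a_j)-v(a_i)$, live in the divisible hull $\Q\Gamma$ rather than in $\Gamma$.

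The engine of the proof is the function $D\colon \Q\Gamma \to \{0,\dots,n\}$ given by $D(\gamma) = \max\{\, i : a_i \neq 0 \text{ and } \ell_i(\gamma) = \minval_f(\gamma)\,\}$, which is exactly the index $d$ appearing in the definition of $\loc_{f,t}$. First I would show $D$ is non-increasing: if $\gamma \le \gamma'$ and $d' = D(\gamma')$, then for each $i < d'$ with $a_i \neq 0$, multiplication by the positive integer $d'-i$ is order-preserving on $\Q\Gamma$, so $\ell_{d'}(\gamma) - \ell_i(\gamma) \le \ell_{d'}(\gamma') - \ell_i(\gamma') = \minval_f(\gamma') - \ell_i(\gamma') \le 0$; hence $\ell_{d'}(\gamma) = \min_{i \le d'} \ell_i(\gamma)$, and since $\minval_f(\gamma)$ is either this value or a strictly smaller one attained by some $i > d'$, in all cases $D(\gamma) \ge d'$. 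Now list the distinct values of $D$ in decreasing order as $c_1 > \cdots > c_k$ (all lie in $\N$, being degrees) and set $S_j = D^{-1}(c_j)$. Monotonicity of $D$ makes each $S_j$ convex (an ``interval''), the $S_j$ occur in the order $S_1 < \cdots < S_k$, they partition $\Q\Gamma$, and on $S_j$ one has $\minval_f(\gamma) = \ell_{c_j}(\gamma) = c_j\gamma + \beta_j$ with $\beta_j = v(a_{c_j}) \in \Gamma$. This already produces the shape of the piecewise formula.

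To pin down the breakpoints, compare the affine pieces of consecutive parts: $\ell_{c_j}(\gamma) \le \ell_{c_{j+1}}(\gamma)$ if and only if $\gamma \le \delta_j$, where $\delta_j = \bigl(v(a_{c_{j+1}}) - v(a_{c_j})\bigr)/(c_j - c_{j+1}) \in \Q\Gamma$, with equality only at $\gamma = \delta_j$. On $S_j$ we have $\ell_{c_j} \le \ell_{c_{j+1}}$, so $S_j \subseteq \{\gamma \le \delta_j\}$; on $S_{j+1}$, since $D = c_{j+1} < c_j$ there, the index $c_j$ fails to attain the minimum, so $\ell_{c_{j+1}}(\gamma) < \ell_{c_j}(\gamma)$ and thus $S_{j+1} \subseteq \{\gamma > \delta_j\}$. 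As the $S_j$ are nonempty ordered intervals covering $\Q\Gamma$, this forces $S_1 \cup \cdots \cup S_j = \{\gamma \le \delta_j\}$, in particular $\delta_j \in S_j$, and $\delta_1 < \cdots < \delta_{k-1}$; the overlap of cases at $\delta_j$ is harmless because $\ell_{c_j}(\delta_j) = \ell_{c_{j+1}}(\delta_j) = \minval_f(\delta_j)$. For the ``moreover'', take $t$ with $v(t) = \delta_i$. By Proposition~\ref{Prop:LocalPolys}, the exponents occurring in $\loc_{f,t}$ are precisely the indices $m$ with $\ell_m(\delta_i) = \minval_f(\delta_i)$; the largest of these is $D(\delta_i)$, which equals $c_i$ since $\delta_i \in S_i$, giving $j_{i,s_i} = c_i$, and $c_{i+1}$ is among them by the definition of $\delta_i$. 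Finally, no $m < c_{i+1}$ can occur: such an $\ell_m$ would agree with $\ell_{c_{i+1}}$ at $\delta_i$ but have strictly smaller slope, so $\ell_m(\gamma) < \ell_{c_{i+1}}(\gamma) = \minval_f(\gamma)$ for every $\gamma \in S_{i+1}$ (nonempty, lying above $\delta_i$), a contradiction; hence $j_{i,1} = c_{i+1}$.

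None of this requires a new idea; the work is in keeping the ordered-group arithmetic straight — multiplication by a positive integer is an order-embedding of $\Q\Gamma$, and divisibility of $\Q\Gamma$ is what lets us form the $\delta_j$ — and I expect the one genuinely delicate point to be verifying that the equality at each breakpoint is attained on the lower piece (that is, $\delta_j \in S_j$, equivalently $D(\delta_j) = c_j$): this is exactly what makes the ``moreover'' land with $c_i$ as the top exponent and $c_{i+1}$ as the bottom exponent of $\loc_{f,t}$, rather than shifted by one.
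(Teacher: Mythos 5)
Your proof is correct, and it takes a genuinely different route from the paper's. The paper first enumerates the breakpoints $\delta_1 < \cdots < \delta_{k-1}$ directly, as the points of $\Q\Gamma$ where the minimum $\min_j\{j\gamma + v(a_j)\}$ is attained by at least two indices; it defines $c_i$ as the largest attaining index at $\delta_i$, and then verifies the affine formula on each closed interval by contradiction, via a finite descent: if some line dips below $c_i\gamma+\beta_i$ at an interior point, a new crossing point strictly closer to $\delta_i$ is produced, and the process must terminate since only finitely many crossing points exist. The strict decrease $c_1 > \cdots > c_k$ is only obtained at the very end, as a byproduct of the ``moreover'' clause. Your argument instead rests on one structural lemma --- the leading-index function $D(\gamma) = \max\{i : \ell_i(\gamma) = \minval_f(\gamma)\}$ is non-increasing --- and reads the whole piecewise description off the level sets $S_j$ of $D$: convexity and ordering of the $S_j$ come for free from monotonicity, the $\delta_j$ arise as crossing points of consecutive selected lines, the decrease of the $c_j$ is built in, and the identity $D(\delta_j)=c_j$ (your $\delta_j \in S_j$) is precisely what delivers the ``moreover'' clause with $c_i$ as top exponent and $c_{i+1}$ as bottom exponent of $\loc_{f,t}$ --- you correctly flagged this as the delicate point. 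Your route also handles the edge cases uniformly (the case $i=k-1$, which the paper treats separately through its ad hoc definition of $c_k$ as a minimum, and the case $k=1$). The one step I would ask you to write out fully is the interval bookkeeping giving $S_1 \cup \cdots \cup S_j = \{\gamma \le \delta_j\}$ and hence $\delta_j \in S_j$: as sketched it is sound, but it uses that each $S_j$ is nonempty and that the $S_j$ are totally ordered, and that deserves an explicit sentence.
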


\begin{proof}

	Write $f(x) = a_nx^n + \cdots + a_1x+ a_0$, where $a_0, a_1, \dots, a_n \in K$. We know that $\minval_f(\gamma) = \min\{j\gamma + v(a_j) \mid j \in \{0, \dots, n\}\}$ for all $\gamma \in \Q\Gamma$. Then let $\delta_1 < \cdots < \delta_{k-1}$ be the elements of $\Q{\Gamma}$ such that $\minval_f(\delta_i) = j\delta_i + v(a_j) $ for at least two indices $j$. Since $j\gamma + v(a_j) =  j'\gamma + v(a_{j'})$ for $j \neq j'$ if and only if $\gamma = \frac{v(a_{j'})-v(a_j)}{j-j'}$, we know that $\{\delta_1, \dots, \delta_{k-1}\}$ is a subset of the finite set $\left\{\frac{v(a_{j'})-v(a_j)}{j-j'} \,\middle\vert\, j,j' \in \{0, \dots, n\}, j \neq j' \right\}$.  For convenience, set $\delta_0 = - \infty$ and $\delta_k = \infty$. 
	
	For $i \in \{1,\dots, k-1\}$, we set \[c_i \coloneqq \max\{ j \in \{0,1, \dots, n \} \mid \minval_f(\delta_i) = j\delta_i + v(a_j)  \}.\] Also set \[c_k \coloneqq \min\{j \in \{0,1, \dots, n\} \mid \minval_f(\delta_{k-1}) = j\delta_{k-1} + v(a_j) \}.\] Next, set $\beta_i \coloneqq v(a_{c_i})$ for $i \in \{1, \dots, k\}$. 
	
	Fix $i$ in $\{1, \dots, k-1\}$. We want to show $\minval_f(\gamma) = c_i\gamma + \beta_i$ for all $\gamma \in \Q\Gamma$ such that $\delta_{i-1} \leq \gamma \leq \delta_i$. Suppose not. Then there exists $\gamma \in \Q\Gamma$ such that $\delta_{i-1} \leq \gamma < \delta_i$ and $\minval_f(\gamma) < c_i\gamma + \beta_i$. We now have that $\minval_f(\gamma) = j\gamma + v(a_j) < c_i\gamma + \beta_i$ for some $j \in \{0, \dots, n\}$ different from $c_i$. Thus, $(j-c_i)\gamma < \beta_i - v(a_j)$. Also note that $j\delta_i + v(a_j) \geq c_{i}\delta_i + \beta_i$, so $(j-c_i)\delta_i \geq \beta_i - v(a_j)$. These inequalities imply that $(j-c_i)\gamma < \beta_i - v(a_j) \leq (j-c_i)\delta_i$. Because $\gamma < \delta_i$, we can deduce that $j > c_i$. Using the inequalities again shows that $\gamma < \frac{\beta_i-v(a_j)}{j-c_i} \leq \delta_i$. We want both inequalities to be strict. If $\frac{\beta_i-v(a_j)}{j-c_i} = \delta_i$, then $j\delta_i + v(a_j) = c_i\delta_i + \beta_i$. The fact that $j > c_i$ contradicts the maximality of $c_i$. Thus, $\gamma < \frac{\beta_i-v(a_j)}{j-c_i} < \delta_i$. Due to the fact that $j\delta + v(a_j) = c_i\delta + \beta_i$ for $\delta = \frac{\beta_i-v(a_j)}{j-c_i}$ and the way $\delta_1, \dots, \delta_{k-1}$ are picked out, we know that $\minval_f\left(\frac{\beta_i-v(a_j)}{j-c_i}\right) < c_i \cdot \frac{\beta_i-v(a_j)}{j-c_i} + \beta_i$. We replace $\gamma$ with $\frac{\beta_i-v(a_j)}{j-c_i}$ and repeat the argument. Since $\delta_{i-1} \leq \gamma < \frac{\beta_i-v(a_j)}{j-c_i} < \delta_i$ and there are only finitely many elements of the form $\frac{\beta_i-v(a_j)}{j-c_i}$ for $j \neq c_i$, this argument cannot be repeated infinitely. Therefore, a contradiction is reached eventually after a finite number of repetitions. This shows that $\minval_f(\gamma) = c_i\gamma + \beta_i$ for all $\gamma \in \Q\Gamma$ such that $\delta_{i-1} \leq \gamma \leq \delta_i$. A similar argument will show that $\minval_f(\gamma) = c_k\gamma + \beta_k$ for all $\gamma \in \Q\Gamma$ such that $\gamma \geq \delta_{k-1}$. 
	
	Again, fix $i \in \{1, \dots, k-1\}$. Suppose that $t \in K$ is any element such that $v(t) = \delta_i$. Write $\loc_{f, t}(x) = r_{j_{i,t,1}}x^{j_{i,1}} + \cdots + r_{j_{i,t,s_i-1}}x^{j_{i,s_i-1}} + x^{j_{i,s_i}}$ with $j_{i,1} < \cdots < j_{i,s_i}$, where $r_{j_{i,t,1}}, \dots, r_{j_{i,t,s_i-1}} \in V/\m$ and are all nonzero and each $s_i \geq 2$. The local polynomial has this form due to Proposition \ref{Prop:LocalPolys}. We also furthermore know that $j_{i,1} < \cdots < j_{i,s_i}$ are all the indices $j$ such that $\minval_f(\delta_i) =  j\delta_i + v(a_j)$. We then have $c_i = j_{i,s_i}$ since $c_i$ is the maximum of all such indices. If $i = k-1$, we have $c_k = j_{i,1}$ by the definition of $c_k$. Now suppose that $i < k-1$ and $c_{i+1} > j_{i,1}$ for a contradiction. The fact that $j_{i,1}\delta_i + \beta_{j_{i,1}} = c_{i+1}\delta_i + \beta_{i+1}$ implies
	\begin{align*}
		j_{i,1}\delta_{i+1} + \beta_{j_{i,1}} &=j_{i,1}\delta_i + \beta_{j_{i,1}}  + j_{i,1}(\delta_{i+1}-\delta_i)\\
		& = c_{i+1}\delta_i +\beta_{i+1} + j_{i,1}(\delta_{i+1}-\delta_i) \\
		& <c_{i+1}\delta_i +\beta_{i+1} + c_{i+1}(\delta_{i+1}-\delta_i)\\
		& = c_{i+1}\delta_{i+1} + \beta_{i+1} \\
		& = \minval_f(\delta_{i+1}),
	\end{align*}
	contradicting the definition of $\minval_f(\delta_{i+1})$. This shows that $c_{i+1} = j_{i,1}$. Note also that $s_i > 1$ by the definition of $\delta_i$. Thus, $c_{i+1} = j_{i,1} < j_{i,s_i} = c_i$, which implies that $c_1 > \cdots > c_k$.
\end{proof}

\begin{remark}
	We see that if the value group $\Gamma$ is divisible, then $\delta_1, \dots, \delta_k$ given above are always in $\Gamma$. 
\end{remark}

Now we give some results about how information about the valuation of the polynomial evaluations can be extracted from the minimum valuation polynomial and the local polynomials. 

\begin{lemma}\label{Lem:MinvalLowerBound}
	Let $f \in K[x]$ be a nonzero polynomial. For all nonzero $t \in K$, we have $\minval_f(v(t)) \leq v(f(t))$. 
\end{lemma}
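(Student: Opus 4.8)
The plan is to reduce the statement to the non-Archimedean triangle inequality for the valuation $v$. Write $f(x) = a_nx^n + \cdots + a_1x + a_0$ with $a_0, \dots, a_n \in K$, so that by definition $\minval_f(v(t)) = \min\{v(a_i) + i\,v(t) \mid 0 \le i \le n\}$, using the usual convention $v(0) = \infty$ so that any coefficient $a_i = 0$ contributes $\infty$ and does not affect the minimum. The point is that evaluating $f$ at $t$ is exactly the monomial valuation $v_{0,\gamma}$ with $\gamma = v(t)$ (as noted in the remark comparing $\minval_f$ to monomial valuations), and this is dominated by $v$ on the specialization $x \mapsto t$.

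Concretely, I would evaluate $f$ at the nonzero element $t \in K$ to get $f(t) = \sum_{i=0}^n a_i t^i$, and then apply the ultrametric inequality $v(x+y) \ge \min\{v(x), v(y)\}$ repeatedly, together with $v(a_i t^i) = v(a_i) + i\,v(t)$:
\[
v(f(t)) = v\!\left( \sum_{i=0}^n a_i t^i \right) \ge \min_{0 \le i \le n} v(a_i t^i) = \min_{0 \le i \le n} \bigl( v(a_i) + i\,v(t) \bigr) = \minval_f(v(t)).
\]
In the degenerate case $f(t) = 0$ the inequality holds trivially, since then $v(f(t)) = \infty$ dominates everything. This gives the claimed bound $\minval_f(v(t)) \le v(f(t))$.

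There is essentially no obstacle to overcome: the lemma is a direct consequence of the valuation axioms. The only minor points worth spelling out explicitly are the bookkeeping for possibly-vanishing coefficients $a_i$ and the trivial case $f(t) = 0$, both of which are absorbed by the convention $v(0) = \infty$. I would not belabor the proof beyond the one displayed chain of (in)equalities.
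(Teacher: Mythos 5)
Your proof is correct and is essentially identical to the paper's: both apply the ultrametric inequality $v\left(\sum_i a_i t^i\right) \geq \min_i v(a_i t^i)$ term by term and identify the right-hand side with $\minval_f(v(t))$. The extra remarks about vanishing coefficients and the case $f(t)=0$ are harmless bookkeeping the paper leaves implicit.
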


\begin{proof}
	Write $f(x) = a_nx^n+ \cdots +a_1x+a_0$, where $a_0, \dots, a_n \in K$. Then 
	\[
	v(f(t)) = v\left(\sum_{i=0}^{n}a_i t^i \right) \geq \min\{v(a_it^i) \mid i \in \{0, \dots, n\} \} = \minval_f(v(t)). 
	\]
\end{proof}

In other words, the minimum valuation function serves as a lower bound for the valuation of polynomial evaluations. We now characterize when this lower bound is strict or not using a local polynomial.

\begin{proposition}\label{Prop:LocalPolysRoots}
	Let $f \in K[x]$ be nonzero and $t \in K$. Then there exists an $s \in K$ with $v(s) = v(t)$ such that $v(f(s))> \minval_f(v(t))$ if and only if $\loc_{f, t}(x)$ has a nonzero root. More specifically, for $u \in V$ such that $v(u) = 0$, we have that $v(f(tu)) > \minval_f(v(t))$ if $\loc_{f, t}(u + \m) = 0$ and $v(f(tu)) = \minval_f(v(t))$ if $\loc_{f, t}(u + \m) \neq 0$. 
\end{proposition}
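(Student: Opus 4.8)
The plan is to reduce everything to evaluating $f$ at points of the form $s = tu$ with $u$ a unit of $V$: if $v(s) = v(t)$ then $u \coloneqq s/t$ has $v(u) = 0$, so $u \in V^\times$, regardless of the sign of $v(t)$. Writing $f(x) = a_nx^n + \cdots + a_1x + a_0$ with $a_i \in K$, set $\gamma = v(t)$ and let $d$ be the largest index with $v(a_d) + d\gamma = \minval_f(\gamma)$, so that $\loc_{f,t}(x) = \frac{f(tx)}{a_dt^d} \bmod \m$; note $a_d \neq 0$ since $\minval_f(\gamma)$ is finite. By Proposition \ref{Prop:LocalPolys} the polynomial $\frac{f(tx)}{a_dt^d}$ already lies in $V[x]$, so for any $u \in V$ with $v(u) = 0$ we may reduce modulo $\m$; since reduction $V \to V/\m$ is a ring homomorphism,
\[
\frac{f(tu)}{a_dt^d} \equiv \loc_{f,t}(u + \m) \pmod{\m}.
\]

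Next I would translate the valuation statement into this congruence. Because $v(a_dt^d) = \minval_f(\gamma)$, we have
\[
v(f(tu)) = \minval_f(\gamma) + v\!\left(\frac{f(tu)}{a_dt^d}\right),
\]
and since $\frac{f(tu)}{a_dt^d} \in V$, the second term is $> 0$ exactly when $\frac{f(tu)}{a_dt^d} \in \m$, i.e.\ when $\loc_{f,t}(u+\m) = 0$, and equals $0$ otherwise. This yields the second, more precise assertion of the proposition: $v(f(tu)) > \minval_f(\gamma)$ iff $\loc_{f,t}(u+\m) = 0$, and $v(f(tu)) = \minval_f(\gamma)$ iff $\loc_{f,t}(u+\m) \neq 0$.

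Finally I would deduce the equivalence. If such an $s$ exists, write $s = tu$ with $u \in V^\times$; then $u + \m \neq 0$ and, by the dichotomy, $\loc_{f,t}(u+\m) = 0$, so $\loc_{f,t}$ has a nonzero root. Conversely, a nonzero root $\bar u \in V/\m$ of $\loc_{f,t}$ lifts to some $u \in V$ with $u \notin \m$, hence $v(u) = 0$; then $s = tu$ satisfies $v(s) = v(t)$ and $\loc_{f,t}(u + \m) = 0$, so $v(f(s)) > \minval_f(\gamma)$ by the dichotomy. I do not anticipate a genuine obstacle: the only points needing care are that $\frac{f(tx)}{a_dt^d}$ has coefficients in $V$ and that its reduction is exactly $\loc_{f,t}$, both supplied by Proposition \ref{Prop:LocalPolys}, together with the elementary fact that a nonzero residue lifts to a unit of $V$; the rest is bookkeeping with the valuation.
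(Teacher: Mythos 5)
Your argument is correct and follows essentially the same route as the paper's proof: normalize $f(tx)$ by $a_dt^d$ where $d$ is the top index achieving $\minval_f(v(t))$, invoke Proposition \ref{Prop:LocalPolys} to reduce modulo $\m$, and translate $v(f(tu)) > \minval_f(v(t))$ into the vanishing of $\loc_{f,t}(u+\m)$. The only cosmetic difference is that you phrase the reduction as a ring-homomorphism congruence while the paper writes out the coefficients explicitly; the substance is identical.
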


\begin{proof}
	Write $f(x) = a_0 + a_1x + \cdots + a_nx^n$ with each $a_i \in K$. Let $i_1 < \cdots < i_r$ be all the indices $i$ such that $\minval_f(v(t)) = iv(t) + v(a_i)$. Then \[\frac{f(tx)}{a_{i_r}t^{i_r}} = \frac{a_0}{a_{i_r}t^{i_r}} + \frac{a_1t}{a_{i_r}t^{i_r}}x + \frac{a_2t^2}{a_{i_r}t^{i_r}}x^2 + \cdots \frac{a_nt^n}{a_{i_r}t^{i_r}}x^n. \]
	Taking this modulo $\m$, we get
	\[
	\loc_{f, t}(x) = \frac{a_{i_1}t^{i_1}}{a_{i_r}t^{i_r}}x^{i_1} + \cdots + \frac{a_{i_{r-1}}t^{i_{r-1}}}{a_{i_r}t^{i_r}}x^{i_{r-1}} + x^{i_r} \mod \m. 
	\]
	Take $s \in K$ with $v(s) = v(t)$. Set $u = \frac{s}{t}$. Note that $u \neq 0 \mod \m$. Then $v(f(s)) > \minval_f(v(t)) = v(a_{i_r}t^{i_r})$ if and only if
	\[
	\frac{f(tu)}{a_{i_r}t^{i_r}} = \frac{a_0}{a_{i_r}t^{i_r}} + \frac{a_1t}{a_{i_r}t^{i_r}}u + \frac{a_2t^2}{a_{i_r}t^{i_r}}u^2 + \cdots \frac{a_nt^n}{a_{i_r}t^{i_r}}u^n
	\]
	has valuation under $v$ that is strictly greater than 0, which can happen if and only if 
	\[
	\loc_{f,t}(u +\m) =  \frac{a_{i_1}t^{i_1}}{a_{i_r}t^{i_r}}u^{i_1} + \cdots + \frac{a_{i_{r-1}}t^{i_{r-1}}}{a_{i_r}t^{i_r}}u^{i_{r-1}} + u^{i_r} \mod \m = 0. 
	\]
\end{proof}

We observe that most of the time, the local polynomial is a monomial, which does not have nonzero roots. Thus, the minimum valuation function gives the valuation of the polynomial evaluations most of the time.

\begin{corollary}\label{Cor:MinvalAlmostEverywhere}
 Take $f \in K[x]$ to be a nonzero polynomial. Then using the notation of Proposition \ref{Prop:FormOfMinval}, if $t \in K$ is such that $v(t) \neq \delta_i$ for all $i \in \{1, \dots, k-1\}$, then $v(f(t)) = \minval_f(v(t))$. 
\end{corollary}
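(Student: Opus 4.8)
The plan is to show that when $v(t)$ avoids all the breakpoints $\delta_1, \dots, \delta_{k-1}$, the minimum defining $\minval_f(v(t))$ is attained by a single monomial of $f$, after which the ultrametric inequality forces $v(f(t))$ to equal that minimum.

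First I would recall the construction from the proof of Proposition \ref{Prop:FormOfMinval}. Writing $f(x) = a_n x^n + \cdots + a_1 x + a_0$ with each $a_i \in K$, the elements $\delta_1 < \cdots < \delta_{k-1}$ are by definition precisely those $\gamma \in \Q\Gamma$ for which $\minval_f(\gamma) = j\gamma + v(a_j)$ holds for at least two distinct indices $j \in \{0, \dots, n\}$ (with $a_j \neq 0$). Since $v(t) \in \Gamma \subseteq \Q\Gamma$ and, by hypothesis, $v(t) \neq \delta_i$ for every $i \in \{1, \dots, k-1\}$, while the minimum $\minval_f(v(t)) = \min\{j v(t) + v(a_j) \mid 0 \le j \le n\}$ is certainly attained by at least one index, it must be attained by a \emph{unique} index $j_0$; that is, $\minval_f(v(t)) = v(a_{j_0}) + j_0 v(t)$ and $v(a_{j_0}) + j_0 v(t) < v(a_i) + i v(t)$ for all $i \neq j_0$ with $a_i \neq 0$.

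Then I would finish with the strong triangle inequality: in $f(t) = \sum_i a_i t^i$ the term $a_{j_0} t^{j_0}$ has valuation strictly smaller than that of every other nonzero term, so $v(f(t)) = v(a_{j_0} t^{j_0}) = \minval_f(v(t))$. (Equivalently, one can route this through the local polynomial: uniqueness of the minimizing index means, via Proposition \ref{Prop:LocalPolys}, that $\loc_{f,t}(x) = x^{j_0}$ is a monomial, which has no nonzero root, so Proposition \ref{Prop:LocalPolysRoots} applied with $u = 1$ yields $v(f(t)) = \minval_f(v(t))$.) There is no real obstacle here; the only step needing a moment's care is the inference that $v(t)$ not being one of the $\delta_i$ forces a unique minimizing index, and this is immediate from how the $\delta_i$ were defined in Proposition \ref{Prop:FormOfMinval}. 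The concluding ultrametric computation is then routine.
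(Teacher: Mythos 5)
Your proposal is correct and takes essentially the same approach as the paper: the key step in both is that avoiding the $\delta_i$ forces a unique minimizing index, and your parenthetical route (the local polynomial is then the monomial $x^{j_0}$, so Proposition \ref{Prop:LocalPolysRoots} applies) is exactly the paper's proof, while your primary route via the strong triangle inequality is just that same argument unpacked.
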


\begin{proof}
	Write $f(x) = a_nx^n + \cdots + a_1x+ a_0$, where $a_0, \dots, a_n \in K$. Since $v(t) \neq \delta_i$ for any $i$, we have that $\minval_f(v(t)) = jv(t) + v(a_j)$ for a unique $j$ in $\{0, \dots, n\}$ by the definition of $\delta_1, \dots, \delta_{k-1}$ in Proposition \ref{Prop:FormOfMinval}. Thus, $\loc_{f,t}(x) = x^j$, which has no nonzero roots, so $v(f(t)) = \minval_f(v(t))$ by Proposition \ref{Prop:LocalPolysRoots}.
\end{proof}

If we further assume that $V$ is a valuation domain with infinite residue field, we can say something about the finitely many exceptions $\delta_1, \dots, \delta_{k-1}$ of the value group. In this case, every element of the value group has some element of that value whose evaluation has valuation determined by the minimum valuation function.

\begin{corollary}\label{Cor:AttainMinval}
	Suppose $V$ has infinite residue field. If $f_1, \dots, f_m \in K[x]$ are a finite number of nonzero polynomials, then for any $\gamma \in \Gamma$, there exists an $a \in K$ with $v(a) = \gamma$ such that $v(f_i(a)) = \minval_{f_i}(\gamma)$ for each $i = 1,\dots, m$. 
\end{corollary}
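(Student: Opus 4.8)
The plan is to reduce the statement to a single pigeonhole argument over the residue field $V/\m$, exploiting that this field is infinite. Fix $\gamma \in \Gamma$ and choose any $t \in K$ with $v(t) = \gamma$; such a $t$ exists precisely because $\gamma$ lies in the value group. For each $i \in \{1, \dots, m\}$ form the local polynomial $\loc_{f_i, t}(x) \in V/\m[x]$. By Proposition \ref{Prop:LocalPolys} this is a well-defined monic polynomial of some degree $d_i \ge 0$ (equal to the constant polynomial $1$ when $d_i = 0$); in particular each $\loc_{f_i, t}$ is a \emph{nonzero} polynomial over the field $V/\m$, hence has at most $d_i$ roots there.

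Next, consider the finite subset
\[
B = \{0\} \cup \bigcup_{i=1}^{m} \{\,\bar u \in V/\m \mid \loc_{f_i, t}(\bar u) = 0\,\}
\]
of $V/\m$. Since $V/\m$ is infinite, we may pick $\bar u \in (V/\m) \setminus B$. Lift $\bar u$ to an element $u \in V$; because $\bar u \neq 0$ we have $v(u) = 0$, i.e.\ $u$ is a unit of $V$.

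Finally set $a = tu$. Then $v(a) = v(t) + v(u) = \gamma + 0 = \gamma$. For each $i$ we have $\loc_{f_i, t}(u + \m) = \loc_{f_i, t}(\bar u) \neq 0$ since $\bar u$ was chosen not to be a root of $\loc_{f_i, t}$, so Proposition \ref{Prop:LocalPolysRoots} yields $v(f_i(a)) = v(f_i(tu)) = \minval_{f_i}(v(t)) = \minval_{f_i}(\gamma)$, which is exactly the desired conclusion.

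\textbf{Main obstacle.} There is no serious difficulty here; the argument is the standard ``an infinite field lets you avoid finitely many bad points,'' now applied to the residue field $V/\m$ and to the finitely many local polynomials $\loc_{f_i, t}$ at the single chosen point $t$. The only points deserving care are (i) invoking Proposition \ref{Prop:LocalPolys} to know each local polynomial is a genuine nonzero polynomial over $V/\m$, so that its root set is finite, and (ii) including $0$ in the forbidden set $B$ so that the chosen lift $u$ has valuation exactly $0$, which is what keeps $v(a) = \gamma$ and lets us apply Proposition \ref{Prop:LocalPolysRoots}.
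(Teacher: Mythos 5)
Your proof is correct and follows essentially the same route as the paper: fix $t$ with $v(t)=\gamma$, note that the local polynomials $\loc_{f_i,t}$ are nonzero and hence have only finitely many roots in the infinite field $V/\m$, choose a nonzero non-root $\bar u$, lift it to a unit $u$, and apply Proposition \ref{Prop:LocalPolysRoots} to $a = tu$. The only (harmless) difference is that you make the exclusion of $0$ from the forbidden set explicit, which the paper handles implicitly by speaking of nonzero roots and taking $u \in V^\times$.
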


\begin{proof}
	Fix $\gamma \in \Gamma$ and $t \in K$ with $v(t) = \gamma$. Let $u \in K$ with $v(u) = 0$. We know for any nonzero polynomial $f \in K[x]$ that $v(f(tu))>\minval_f(v(t))$ if and only if $\loc_{f, t}(u+\m) = 0$. However, $\loc_{f_1, t}, \dots, \loc_{f_m, t}$ can collectively only have a finite number of nonzero roots in $V/\m$. Thus, there exists $u \in V^\times$ such that $\loc_{f_i, t}(u+\m) \neq 0$ for all $i$ and hence $v(f_i(tu)) = \minval_{f_i}(\gamma)$ for each $i = 1,\dots, m$.

\end{proof}

\begin{proposition}
	Let $f, g \in K[x]$ be nonzero polynomials. Then \[\minval_{fg} = \minval_f + \minval_g.\]
\end{proposition}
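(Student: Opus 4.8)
The plan is to recognize $\minval_h$ as a monomial valuation applied to the polynomial $h$ and then to invoke multiplicativity of valuations. Concretely, let $\tilde v\colon K(t_\gamma \mid \gamma \in \Q\Gamma) \to \Q\Gamma \cup \{\infty\}$ be the monomial extension of $v$ defined above, which sends $t_\gamma \mapsto \gamma$. For any nonzero $h = \sum_i a_i x^i \in K[x]$ and any $\gamma \in \Q\Gamma$, substituting $x = t_\gamma$ gives $h(t_\gamma) = \sum_i a_i t_\gamma^i$, and the defining formula for $\tilde v$ on $K[t_\gamma \mid \gamma \in \Q\Gamma]$ yields $\tilde v(h(t_\gamma)) = \min_i\{v(a_i) + i\gamma\} = \minval_h(\gamma)$. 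Applying this identity to $f$, $g$, and $fg$, and using that $\tilde v$ is a valuation, hence additive on products, and that evaluation $x \mapsto t_\gamma$ is a ring homomorphism, we get $\minval_{fg}(\gamma) = \tilde v((fg)(t_\gamma)) = \tilde v(f(t_\gamma) g(t_\gamma)) = \tilde v(f(t_\gamma)) + \tilde v(g(t_\gamma)) = \minval_f(\gamma) + \minval_g(\gamma)$ for every $\gamma$, which is the claim.

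Should one prefer to avoid the auxiliary valued field, here is the self-contained version of the same argument. Write $f = \sum_i a_i x^i$, $g = \sum_j b_j x^j$, so $fg = \sum_k c_k x^k$ with $c_k = \sum_{i+j=k} a_i b_j$, and fix $\gamma$. For the inequality $\minval_{fg}(\gamma) \ge \minval_f(\gamma) + \minval_g(\gamma)$: each summand of $c_k$ satisfies $v(a_i b_j) + k\gamma = (v(a_i) + i\gamma) + (v(b_j) + j\gamma) \ge \minval_f(\gamma) + \minval_g(\gamma)$, so $v(c_k) + k\gamma \ge \minval_f(\gamma) + \minval_g(\gamma)$ by the ultrametric inequality (as in Lemma \ref{Lem:MinvalLowerBound}), and minimizing over $k$ gives the bound.

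For the reverse inequality, choose $i_0$ to be the \emph{largest} index with $v(a_{i_0}) + i_0\gamma = \minval_f(\gamma)$ and $j_0$ the \emph{largest} index with $v(b_{j_0}) + j_0\gamma = \minval_g(\gamma)$, and set $k_0 = i_0 + j_0$. In $c_{k_0} = \sum_{i+j=k_0} a_i b_j$, a term with $i > i_0$ has $i$ outside the minimizing set for $f$ by maximality of $i_0$, so $v(a_i) + i\gamma > \minval_f(\gamma)$ while $v(b_j) + j\gamma \ge \minval_g(\gamma)$; a term with $i < i_0$ forces $j > j_0$, so $v(b_j) + j\gamma > \minval_g(\gamma)$ while $v(a_i) + i\gamma \ge \minval_f(\gamma)$. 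Hence every term except $a_{i_0} b_{j_0}$ has valuation (after adding $k_0\gamma$) strictly exceeding $\minval_f(\gamma) + \minval_g(\gamma)$, whereas $a_{i_0} b_{j_0}$ attains it exactly, so $v(c_{k_0}) + k_0\gamma = \minval_f(\gamma) + \minval_g(\gamma)$ and therefore $\minval_{fg}(\gamma) \le \minval_f(\gamma) + \minval_g(\gamma)$.

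I expect the only real subtlety to be the index bookkeeping in the reverse inequality: one must select extremal minimizing indices on both sides so that $(i_0, j_0)$ is the \emph{unique} pair with $i + j = k_0$ realizing the minimum, which is precisely what rules out cancellation of the leading term of $c_{k_0}$. Everything else follows directly from the definitions and the ultrametric inequality. Via the monomial-valuation route there is essentially nothing left to verify beyond the definitional identity $\minval_h(\gamma) = \tilde v(h(t_\gamma))$ and the multiplicativity of $\tilde v$, both already recorded in the excerpt; I would likely present that version for brevity and relegate the explicit index argument to a remark.
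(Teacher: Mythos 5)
Your proof is correct, but it takes a different route from the one the paper actually writes out. The paper's proof works over the extension $K(t_\gamma \mid \gamma \in \Q\Gamma)$ and argues generically: by Corollary \ref{Cor:MinvalAlmostEverywhere}, the identities $v(f(a_\gamma)) = \minval_f(\gamma)$, $v(g(a_\gamma)) = \minval_g(\gamma)$, $v((fg)(a_\gamma)) = \minval_{fg}(\gamma)$ hold simultaneously for all but finitely many $\gamma \in \Q\Gamma$, so multiplicativity of $v$ on actual evaluations gives the desired equality off a finite set, and piecewise linearity (Proposition \ref{Prop:FormOfMinval}) upgrades this to equality everywhere. Your first argument instead uses the definitional identity $\minval_h(\gamma) = \tilde v(h(t_\gamma)) = v_{0,\gamma}(h)$ and multiplicativity of the monomial valuation to get the equality at \emph{every} $\gamma$ directly, with no exceptional set and no appeal to piecewise linearity; this is precisely the alternative the paper itself records in the remark immediately following its proof, so it is sanctioned but not the official argument. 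Your second, self-contained argument (choosing the largest minimizing indices $i_0$, $j_0$ so that $a_{i_0}b_{j_0}$ is the unique dominant term of $c_{i_0+j_0}$) is the classical Gauss-lemma computation that underlies the claim that the monomial extension is in fact a valuation — a point both the paper and your first argument take on faith — so it is arguably the most honest of the three. The trade-off: the paper's route reuses machinery already established and avoids any index bookkeeping, while yours is more elementary, gives exact equality pointwise rather than generically, and in its second form closes the one gap (multiplicativity of $\tilde v$) that the other arguments outsource.
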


\begin{proof}
	We will view $f$ and $g$ as being in $K(t_\gamma\mid \gamma \in \Q\Gamma)[x]$ and take the monomial valuation $v$ that extends $V$ mapping $t_\gamma$ to $\gamma$ for each $\gamma \in \Q\Gamma$. 
	
	By Corollary \ref{Cor:MinvalAlmostEverywhere}, we know that for all but finitely many $\gamma \in \Q\Gamma$, we have that $v(f(a)) = \minval_f(\gamma)$, $v(g(a)) = \minval_g(\gamma)$, and $v((fg)(a)) = \minval_{fg}(\gamma)$ for all $a \in K(t_\gamma\mid \gamma \in \Q\Gamma)$ such that $v(a) = \gamma$. 
	For such values of $\gamma$, we take $a_\gamma \in K(t_\gamma\mid \gamma \in \Q\Gamma)$ such that $v(a_\gamma) = \gamma$. This means that
	\[
	\minval_{fg}(\gamma) = v((fg)(a_\gamma)) = v(f(a_\gamma)) + v(g(a_\gamma)) = \minval_f(\gamma) + \minval_g(\gamma)
	\]
	holds for all but finitely many $\gamma \in \Q\Gamma$. Since $\minval_{fg}, \minval_f$, and $\minval_g$ are all piecewise linear functions from $\Q\Gamma$ to $\Q\Gamma$ by Proposition \ref{Prop:FormOfMinval}, we have equality for all $\gamma \in \Q\Gamma$ and in particular for all $\gamma \in \Gamma$. 
\end{proof}

The previous proposition also follows using the fact that $\minval_{fg}(\gamma) = v_{0, \gamma}(fg) = v_{0, \gamma}(f) + v_{0, \gamma}(g) = \minval_f(\gamma) + \minval_g(\gamma)$ for every $\gamma \in \Q\Gamma$. 

We now use this fact to define the minimum valuation function of a nonzero rational function.

\begin{definition}
	Let $\varphi \in K(x)$ be a nonzero rational function. Write $\varphi = \frac{f}{g}$ for some $f, g \in K[x]$. For $\gamma \in \Gamma$, we define $\minval_\varphi(\gamma) = \minval_f(\gamma) - \minval_g(\gamma)$, the \textbf{minimum valuation function of $\varphi$}. 
	
	This is well defined. If $\frac{f}{g} = \frac{F}{G}$ for some $F, G \in K[x]$, then $fG = gF$ and then $\minval_f + \minval_G = \minval_g + \minval_F$, which means that $\minval_f - \minval_g = \minval_F-\minval_G$. 
\end{definition}

There is an analog for Proposition \ref{Prop:FormOfMinval} giving the form of the minimum valuation function of a rational function. Note that the ordering on the coefficients of $\gamma$ is lost and these coefficients can be negative.

\begin{proposition}\label{Prop:FormOfMinvalRat}
	 For a nonzero $\varphi \in K(x)$, the function $\minval_\varphi$ has the following form evaluated at $\gamma \in \Q\Gamma$
	\[
	\minval_\varphi(\gamma) = \begin{cases}
		c_1 \gamma + \beta_1, & \gamma \leq \delta_1,\\
		c_2 \gamma + \beta_2, & \delta_1 \leq \gamma \leq \delta_2,\\
		\vdots\\
		c_{k-1} \gamma + \beta_{k-1}, & \delta_{k-2} \leq \gamma  \leq \delta_{k-1},\\
		c_k \gamma + \beta_k, & \delta_{k-1} \leq \gamma,
	\end{cases}
	\] 
	where $c_1, \dots, c_k \in \Z$; $\beta_1, \dots, \beta_k \in \Gamma$; and $\delta_1, \dots, \delta_{k-1} \in \Q{\Gamma}$ such that $\delta_1 < \cdots < \delta_{k-1}$. 
\end{proposition}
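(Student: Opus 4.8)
The plan is to reduce everything to Proposition \ref{Prop:FormOfMinval} applied separately to the numerator and denominator. Write $\varphi = \frac{f}{g}$ with $f, g \in K[x]$ nonzero; by the definition of $\minval_\varphi$ (and the well-definedness observed there) we have $\minval_\varphi = \minval_f - \minval_g$ as functions $\Q\Gamma \to \Q\Gamma$. By Proposition \ref{Prop:FormOfMinval}, $\minval_f$ is piecewise linear with finitely many breakpoints $\delta_1^f < \cdots < \delta_{p-1}^f$ in $\Q\Gamma$, slopes in $\N$, and intercepts in $\Gamma$, and likewise $\minval_g$ has breakpoints $\delta_1^g < \cdots < \delta_{q-1}^g$ in $\Q\Gamma$. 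Let $\delta_1 < \cdots < \delta_{k-1}$ be the elements of the finite set $\{\delta_i^f\} \cup \{\delta_j^g\}$ listed in increasing order, and set $\delta_0 = -\infty$, $\delta_k = \infty$.

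Next I would argue that on each interval $\delta_{i-1} \leq \gamma \leq \delta_i$ both $\minval_f$ and $\minval_g$ restrict to affine-linear maps $\gamma \mapsto a_i\gamma + \beta_i^f$ and $\gamma \mapsto b_i\gamma + \beta_i^g$ with $a_i, b_i \in \N$ and $\beta_i^f, \beta_i^g \in \Gamma$: this is because any open subinterval of $\Q\Gamma$ containing none of the $\delta_j^f$ nor $\delta_j^g$ lies inside a single linear piece of each of $\minval_f$ and $\minval_g$. Consequently, on that interval $\minval_\varphi(\gamma) = (a_i - b_i)\gamma + (\beta_i^f - \beta_i^g)$; putting $c_i := a_i - b_i$ and $\beta_i := \beta_i^f - \beta_i^g$ yields $c_i \in \Z$ (a difference of natural numbers) and $\beta_i \in \Gamma$ (since $\Gamma$ is a group, hence closed under subtraction). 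This is exactly the asserted form, the continuity/agreement of the pieces at each $\delta_i$ being inherited from the continuity of the piecewise linear functions $\minval_f$ and $\minval_g$. Note that the statement imposes no ordering on $c_1, \dots, c_k$ and does not require them to be distinct, so nothing further is needed; if a normalized form is wanted, one may simply delete any $\delta_i$ for which $c_i = c_{i+1}$ and $\beta_i = \beta_{i+1}$.

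There is no real obstacle here: all the substantive work (finiteness of breakpoints, linearity on each piece, rationality of breakpoints, integrality of slopes for polynomials, and membership of intercepts in $\Gamma$) is already contained in Proposition \ref{Prop:FormOfMinval}. The only point to handle with a little care is the bookkeeping of merging the two partitions of $\Q\Gamma$ into a common refinement and checking that the breakpoint set of the difference is contained in the union of the two breakpoint sets; once that is in place, the claim follows formally. The remark that the slopes are no longer monotone and may be negative is explained precisely by the fact that we are now taking a difference rather than a single $\minval_f$.
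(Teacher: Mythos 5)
Your argument is correct and is essentially identical to the paper's proof: both decompose $\varphi = f/g$, invoke Proposition \ref{Prop:FormOfMinval} for $f$ and $g$, merge the two breakpoint sets into a common refinement, and take the difference of the affine pieces on each subinterval to get integer slopes and intercepts in $\Gamma$. No gaps.
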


\begin{proof}
	Write $\varphi = \frac{f}{g}$ for some polynomials $f, g \in K[x]$. By Proposition \ref{Prop:FormOfMinval}, we know there are $\delta_1, \dots, \delta_{k-1}, \delta_1', \dots, \delta_{k'-1}'$ such that $-\infty = \delta_0 < \delta_1 < \cdots < \delta_{k-1} < \delta_k = \infty$ and $-\infty = \delta_0' < \delta_1' < \cdots < \delta_{k'-1}' < \delta_{k'} = \infty$ and that for all $\gamma \in \Q\Gamma$ such that $\gamma$ is between $\delta_i$ and $\delta_{i+1}$, we have some $c_i \in \N$ and $\beta_i \in \Gamma$ such that
	\[\minval_f(\gamma) = c_i\gamma + \beta_i\]
	and for all $\gamma \in \Q\Gamma$ such that $\gamma$ is between $\delta_i'$ and $\delta_{i+1}'$, we have some $c_i' \in \N$ and $\beta_i' \in \Gamma$ such that
	
	\[
	\minval_g(\gamma) = c_i'\gamma + \beta_i'.
	\]
	Now order the elements of the set $\{\delta_0, \dots, \delta_k, \delta_0', \dots, \delta_k'\}$ and rename the elements $\delta_0'', \dots, \delta_{k''}''$ so that $-\infty = \delta_0'' < \delta_1'' < \cdots < \delta_{k''-1}'' < \delta_{k''}'' = \infty$. Let $r$ be such that $r \in \{0, \dots, k''-1\}$. We know that the interval between $\delta_r''$ and $\delta_{r+1}''$ is contained in the interval between $\delta_{i}$ and $\delta_{i+1}$ and also contained in the interval between $\delta_j'$ and $\delta_{j+1}'$ for some $i$ and $j$. Thus, for all $\gamma \in \Q\Gamma$ such that $\delta_r'' \leq \gamma \leq \delta_{r+1}''$, we have
	\[
	\minval_\varphi(\gamma) = \minval_f(\gamma) - \minval_g(\gamma) = (c_i - c_j')\gamma + (\beta_i-\beta_j'),
	\]
	giving us the desired form for $\minval_\varphi$.
\end{proof}

Even though the ordering on the coefficients of $\gamma$ in $\minval_\varphi$ is lost, the coefficients can still give information about the powers that appear in the local polynomials.

\begin{lemma}\label{Lem:PowersOfLoc}
	Take $\varphi \in K(x)$ to be nonzero and $\alpha \in \Gamma$. There exist $\varepsilon \in \Q\Gamma$ with $\varepsilon > 0$ small enough, $c, c' \in \Z$, and $\beta, \beta' \in \Gamma$ such that
	\[
	\minval_\varphi(\gamma) = \begin{cases}
		c\gamma + \beta, & \text{if $\alpha - \varepsilon < \gamma < \alpha$},\\
		c'\gamma + \beta', & \text{if $\alpha < \gamma < \alpha + \varepsilon$}. 
	\end{cases}
	\]
	Write $\varphi = \frac{f}{g}$ for some $f, g \in K[x]$. Take $t \in K$ such that $v(t) = \gamma$. We can write $\loc_{f, t} = a_{i_1}x^{i_1} + \cdots + a_{i_r}x^{i_r}$ and $\loc_{g, t} = b_{j_1}x^{j_1}+ \cdots + b_{j_s}x^{j_s}$ for some nonzero $a_{i_1}, \dots, a_{i_r}, b_{j_1}, \dots, b_{j_s} \in V/\m$. Then
	\[
	c = i_r - j_s \quad \text{and} \quad c' = i_1-j_1.
	\]
\end{lemma}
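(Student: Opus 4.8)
The plan is to reduce everything to the polynomial case handled by Proposition \ref{Prop:FormOfMinval}. Write $\varphi = \frac{f}{g}$; then $\minval_\varphi = \minval_f - \minval_g$ by definition, and $\loc_{f,t}$ and $\loc_{g,t}$ depend on $t$ only through $v(t)$, which I will take to be $\alpha$. So it suffices to understand, for a single nonzero polynomial $h \in K[x]$, how the top and bottom degrees of $\loc_{h,t}$ with $v(t) = \alpha$ relate to the one-sided slopes of $\minval_h$ at $\alpha$, and then to subtract the numerator and denominator data.

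First I would extract $\varepsilon$. By Proposition \ref{Prop:FormOfMinval}, each of $\minval_f$ and $\minval_g$ is piecewise linear on $\Q\Gamma$ with only finitely many break points (the $\delta_i$'s); choose $\varepsilon \in \Q\Gamma$ with $0 < \varepsilon$ small enough that $(\alpha - \varepsilon, \alpha + \varepsilon)$ contains no break point of $\minval_f$ or $\minval_g$ except possibly $\alpha$ itself. Then on $(\alpha - \varepsilon, \alpha)$ both $\minval_f$ and $\minval_g$, hence $\minval_\varphi$, are affine, say $\minval_\varphi(\gamma) = c\gamma + \beta$, where $c$ is the difference of the left-hand slopes of $\minval_f$ and $\minval_g$; likewise $\minval_\varphi(\gamma) = c'\gamma + \beta'$ on $(\alpha, \alpha + \varepsilon)$ with $c'$ the difference of the right-hand slopes. (Alternatively one can just quote Proposition \ref{Prop:FormOfMinvalRat} for the piecewise-linear shape of $\minval_\varphi$.) This already gives the displayed case formula; it remains to identify $c$ and $c'$ with the stated differences of degrees.

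Next, fix $t \in K$ with $v(t) = \alpha$. By Proposition \ref{Prop:LocalPolys} the exponents occurring in $\loc_{f,t}$ are exactly the indices $i$ with $\minval_f(\alpha) = i\alpha + v(a_i)$. If $\alpha$ is not a break point of $\minval_f$, there is a unique such index, equal to the common slope of $\minval_f$ on both sides of $\alpha$, so $\loc_{f,t}$ is a monomial and $i_r = i_1$ equals that slope. If $\alpha = \delta_i$ is a break point, Proposition \ref{Prop:FormOfMinval} states precisely that the largest exponent of $\loc_{f,t}$ is $c_i$, the slope of $\minval_f$ immediately to the left of $\alpha$, and the smallest exponent is $c_{i+1}$, the slope immediately to the right. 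In either case the top degree $i_r$ of $\loc_{f,t}$ is the left-hand slope of $\minval_f$ at $\alpha$ and the bottom degree $i_1$ is the right-hand slope; the same holds for $g$ with top degree $j_s$ and bottom degree $j_1$. Subtracting the numerator and denominator slopes on each side gives $c = i_r - j_s$ and $c' = i_1 - j_1$.

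The argument is essentially routine given the earlier results; the one point requiring care is the orientation bookkeeping — checking that "top degree" consistently matches the left-hand slope and "bottom degree" the right-hand slope — together with phrasing the break-point and interior cases of Proposition \ref{Prop:FormOfMinval} as a single uniform statement. I do not expect any genuine obstacle here: the lemma is really a dictionary between the two equivalent descriptions of $\minval$ that are already established.
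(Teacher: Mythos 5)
Your proposal is correct and follows essentially the same route as the paper: shrink $\varepsilon$ so that $\alpha$ is the only possible break point of $\minval_f$ and $\minval_g$ nearby, identify the one-sided slopes of each with the top and bottom exponents of the corresponding local polynomial via Proposition \ref{Prop:FormOfMinval} (and Proposition \ref{Prop:LocalPolys}), and subtract. The paper's proof is just a terser version of the same argument; your explicit treatment of the non-break-point case (where the local polynomial is a monomial and $i_1 = i_r$) is a detail the paper leaves implicit.
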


\begin{proof}
	Due to Proposition \ref{Prop:FormOfMinval}, we can make $\varepsilon$ small enough so that there exist $\zeta,\zeta',\eta,\eta' \in \Gamma$ such that 
	\[
	\minval_f(\gamma) = \begin{cases}
		i_r\gamma + \zeta, & \text{if $\alpha - \varepsilon < \gamma < \alpha$},\\
		i_1\gamma + \zeta', & \text{if $\alpha < \gamma < \alpha + \varepsilon$}
	\end{cases}
	\]
	and
	\[
	\minval_g(\gamma) = \begin{cases}
		j_s\gamma + \eta, & \text{if $\alpha - \varepsilon < \gamma < \alpha$},\\
		j_1\gamma + \eta', & \text{if $\alpha < \gamma < \alpha + \varepsilon$}. 
	\end{cases}
	\]
	Since $\minval_\varphi = \minval_f - \minval_g$, we obtain $c = i_r - j_s$ and $c' = i_1 - j_1$. 
\end{proof}

However, for rational functions, there is no analog of Lemma \ref{Lem:MinvalLowerBound}, since it is possible that $\minval_\varphi(v(t))$ is greater than, equal to, or less than $v(\varphi(t))$. To calculate how $\minval_\varphi(v(t))$ compares with $v(\varphi(t))$, we can try to apply Proposition \ref{Prop:LocalPolysRoots} to the local polynomials of the numerator and the denominator of $\varphi$. This does not give a definite answer in the case when the local polynomials have a common nonzero root, so further calculations are needed in this case. Nevertheless, there are analogs of Corollary \ref{Cor:MinvalAlmostEverywhere} and Corollary \ref{Cor:AttainMinval} that say $\minval_\varphi(v(t))$ and $v(\varphi(t))$ are equal most of the time.

\begin{lemma}\label{Lem:MinvalEqualityAE}
	Take $\varphi \in K(x)$ to be a nonzero rational function. For all but finitely many $\gamma \in \Gamma$, we have that $v(\varphi(t)) = \minval_\varphi(v(t))$ for all $t \in K$ such that $v(t) = \gamma$. 
\end{lemma}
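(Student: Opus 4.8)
The plan is to reduce the statement for rational functions to the polynomial statement already proved in Corollary~\ref{Cor:MinvalAlmostEverywhere}. Write $\varphi = \frac{f}{g}$ with $f, g \in K[x]$ nonzero. By Proposition~\ref{Prop:FormOfMinval}, the functions $\minval_f$ and $\minval_g$ are piecewise linear with finite sets of break points, say $\{\delta_1 < \cdots < \delta_{k-1}\} \subseteq \Q\Gamma$ for $f$ and $\{\delta_1' < \cdots < \delta_{k'-1}'\} \subseteq \Q\Gamma$ for $g$. Let $S \subseteq \Gamma$ be the (finite) set consisting of those $\delta_i$ and those $\delta_j'$ that happen to lie in $\Gamma$; this will be the exceptional set of $\gamma$'s.

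Next I would fix $\gamma \in \Gamma \setminus S$ and an arbitrary $t \in K$ with $v(t) = \gamma$. By the choice of $S$ we have $\gamma \neq \delta_i$ for every $i$ and $\gamma \neq \delta_j'$ for every $j$, so Corollary~\ref{Cor:MinvalAlmostEverywhere} applied separately to $f$ and to $g$ yields $v(f(t)) = \minval_f(\gamma)$ and $v(g(t)) = \minval_g(\gamma)$. Since $g$ is a nonzero polynomial, $\minval_g(\gamma) \in \Gamma$ is finite, hence $g(t) \neq 0$, so $\varphi(t) = \frac{f(t)}{g(t)}$ is well defined and $v(\varphi(t)) = v(f(t)) - v(g(t)) = \minval_f(\gamma) - \minval_g(\gamma) = \minval_\varphi(\gamma)$ by the definition of $\minval_\varphi$. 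As $t$ was an arbitrary element of valuation $\gamma$ and $\gamma$ was arbitrary outside the finite set $S$, this proves the lemma.

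There is essentially no obstacle here: all the substantive work lives in Proposition~\ref{Prop:FormOfMinval}, Proposition~\ref{Prop:LocalPolysRoots}, and Corollary~\ref{Cor:MinvalAlmostEverywhere}, and this lemma is a short bookkeeping argument combining the numerator and denominator. The only minor point worth stating carefully is that the break points produced by Proposition~\ref{Prop:FormOfMinval} a priori lie in $\Q\Gamma$, so one must intersect with $\Gamma$ to get the genuine exceptional subset of $\Gamma$ — and note that this intersection is still finite, and that the contrapositive ``$\gamma \notin S \Rightarrow \gamma$ is not a break point of $\minval_f$ or $\minval_g$'' is exactly what licenses the application of Corollary~\ref{Cor:MinvalAlmostEverywhere}.
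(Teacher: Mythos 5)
Your proof is correct and follows exactly the paper's own argument: write $\varphi = f/g$, apply Corollary \ref{Cor:MinvalAlmostEverywhere} separately to the numerator and denominator, and take the union of the two finite exceptional sets. The extra remark about intersecting the break points in $\Q\Gamma$ with $\Gamma$ is a fine clarification but does not change the substance.
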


\begin{proof}
	We write $\varphi = \frac{f}{g}$ for some $f, g \in K[x]$. Corollary \ref{Cor:MinvalAlmostEverywhere} tells us that for all but finitely many $\gamma \in \Gamma$, we have $v(f(t)) = \minval_f(v(t))$ for all $t \in K$ such that $v(t) = \gamma$. Similarly, for all but finitely many $\gamma \in \Gamma$, we have $v(g(t)) = \minval_g(v(t))$ for all $t \in K$ such that $v(t) = \gamma$. There are still only finitely many values of $\gamma \in \Gamma$ we need to exclude. Thus, for all but finitely many $\gamma \in \Gamma$, we have that \[v(\varphi(t)) = v(f(t)) - v(g(t)) = \minval_f(v(t)) - \minval_g(v(t)) =  \minval_\varphi(v(t))\] for all $t \in K$ such that $v(t) = \gamma$. 
\end{proof}

\begin{proposition}\label{Prop:AttainMinvalRat}
	Suppose the residue field of $V$ is infinite. If we have nonzero $\varphi_1, \dots, \varphi_m \in K(x)$, then for any $\gamma \in \Gamma$, there exists $a \in K$ with $v(a) = \gamma$ such that $\minval_{\varphi_i}(\gamma) = v(\varphi_i(a))$ for all $i$. 
\end{proposition}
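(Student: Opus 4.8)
The plan is to deduce this directly from Corollary \ref{Cor:AttainMinval} by splitting each rational function into its numerator and denominator. First I would write each $\varphi_i = \frac{f_i}{g_i}$ with $f_i, g_i \in K[x]$ nonzero (the choice of representation is immaterial, since $\minval_{\varphi_i} = \minval_{f_i} - \minval_{g_i}$ was shown to be well defined). This produces a finite list $f_1, g_1, \dots, f_m, g_m$ of $2m$ nonzero polynomials in $K[x]$.

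Next, since the residue field of $V$ is infinite, Corollary \ref{Cor:AttainMinval} applies to this finite collection: for the given $\gamma \in \Gamma$ there exists $a \in K$ with $v(a) = \gamma$ such that $v(f_i(a)) = \minval_{f_i}(\gamma)$ and $v(g_i(a)) = \minval_{g_i}(\gamma)$ for every $i$. In particular each $v(g_i(a)) = \minval_{g_i}(\gamma)$ lies in $\Gamma$ (not $\infty$), so $g_i(a) \neq 0$; hence $a$ is not a pole of $\varphi_i$, the value $\varphi_i(a)$ is defined, and $v(\varphi_i(a)) = v(f_i(a)) - v(g_i(a))$.

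Finally, combining these equalities with $\minval_{\varphi_i} = \minval_{f_i} - \minval_{g_i}$ yields
\[
v(\varphi_i(a)) = v(f_i(a)) - v(g_i(a)) = \minval_{f_i}(\gamma) - \minval_{g_i}(\gamma) = \minval_{\varphi_i}(\gamma)
\]
for each $i$, which is exactly the claim. There is essentially no obstacle here: every step is either a direct invocation of Corollary \ref{Cor:AttainMinval} or the elementary multiplicativity of $v$. The only point that needs to be stated with care is that one must feed all $2m$ polynomials $f_1, g_1, \dots, f_m, g_m$ into Corollary \ref{Cor:AttainMinval} at once, so that a single element $a$ simultaneously realizes $\minval$ for all numerators and all denominators; this is harmless because that corollary already accommodates any finite family of polynomials.
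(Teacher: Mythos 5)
Your proof is correct and is essentially identical to the paper's: both write each $\varphi_i = \frac{f_i}{g_i}$, apply Corollary \ref{Cor:AttainMinval} to the combined family of $2m$ numerators and denominators, and subtract. Your additional observation that $v(g_i(a)) \in \Gamma$ forces $g_i(a) \neq 0$ is a small but welcome extra check that the paper leaves implicit.
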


\begin{proof}
	Write $\varphi_i = \frac{f_i}{g_i}$ with $f_i,g_i \in K[x]$ for all $i$. By applying Corollary \ref{Cor:AttainMinval} to $f_1,\dots, f_m, g_1, \dots, g_m$, we see that for any $\gamma \in \Gamma$, there exists $a\in K$ with $v(a) = \gamma$ such that $\minval_{f_i}(\gamma) = v(f_i(a))$ and $\minval_{g_i}(\gamma) = v(g_i(a))$ for all $i$. Putting these together yields
	\[
	\minval_{\varphi_i}(\gamma) = \minval_{f_i}(\gamma) - \minval_{g_i}(\gamma) = v(f_i(a)) - v(g_i(a)) = v(\varphi_i(a))
	\]
	for all $i$.	
\end{proof}

The following lemma will be useful in describing the case of a valuation domain with algebraically closed residue field and maximal ideal that is not principal. 
\begin{lemma}\label{Lem:ValuationLemma}
	Suppose that $V/\m$ is algebraically closed and $\m$ is not a principal ideal of $V$. Let $\varphi \in K(x)$ be a nonzero rational function such that there exist $\alpha, \epsilon \in \Gamma$ with $\epsilon > 0$ so that
	\[
	\minval_\varphi(\gamma) = \begin{cases}
		c_1\gamma + \beta_1, & \text{ if $\alpha - \epsilon \leq \gamma \leq \alpha$,}\\
		c_2\gamma + \beta_2, & \text{ if $\alpha \leq \gamma \leq \alpha + \epsilon$,}
	\end{cases}
	\]
	for some $c_1, c_2 \in \Z$ and $\beta_1, \beta_2 \in \Gamma$. 
	
	If $c_1 > c_2$, then there exists $a \in K$ with $v(a) = \alpha$ and $v(\varphi(a)) > \minval_\varphi(\alpha)$. 
	
	If $c_1 < c_2$, then there exists $a \in K$ with $v(a) = \alpha$ and $v(\varphi(a)) < \minval_\varphi(\alpha)$. 
\end{lemma}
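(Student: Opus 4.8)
The plan is to prove the statement for $c_1 > c_2$; the case $c_1 < c_2$ then follows by applying it to $1/\varphi$, since $\minval_{1/\varphi} = -\minval_\varphi$ has left and right slopes $-c_1 > -c_2$ at $\alpha$, and an inequality $v((1/\varphi)(a)) > \minval_{1/\varphi}(\alpha)$ rearranges to $v(\varphi(a)) < \minval_\varphi(\alpha)$. So assume $c_1 > c_2$. The idea is to localise everything at a single $t$ of value $\alpha$, where the slopes $c_1,c_2$ are visible in the local polynomials $\loc_{f,t},\loc_{g,t}$ (by Lemma~\ref{Lem:PowersOfLoc}), extract a residue at which those two polynomials differ in root multiplicity, and then perturb $t$ near that residue at a sufficiently small scale.

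Concretely, I would write $\varphi = \frac fg$ with $f,g\in K[x]$, fix $t\in K$ with $v(t)=\alpha$, and write $\loc_{f,t}=a_{i_1}x^{i_1}+\cdots+a_{i_r}x^{i_r}$ and $\loc_{g,t}=b_{j_1}x^{j_1}+\cdots+b_{j_s}x^{j_s}$ as in Proposition~\ref{Prop:LocalPolys}, with $i_1<\cdots<i_r$ and $j_1<\cdots<j_s$. Matching our hypothesis with Lemma~\ref{Lem:PowersOfLoc} gives $c_1=i_r-j_s$ and $c_2=i_1-j_1$, so $c_1>c_2$ means $i_r-i_1>j_s-j_1$. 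Since $0$ is a root of $\loc_{f,t}$ of multiplicity exactly $i_1$ while $\deg\loc_{f,t}=i_r$, and $V/\m$ is algebraically closed, $\loc_{f,t}$ has exactly $i_r-i_1$ nonzero roots counted with multiplicity and $\loc_{g,t}$ has exactly $j_s-j_1$; comparing these counts and summing multiplicities over the distinct nonzero roots of $\loc_{f,t}$ produces a nonzero $\bar\rho\in V/\m$ whose multiplicity $m$ in $\loc_{f,t}$ strictly exceeds its multiplicity $m'$ (possibly $0$) in $\loc_{g,t}$. Replacing $t$ by $t\rho$ for a unit lift $\rho$ of $\bar\rho$ — which preserves $v(t)=\alpha$ and the exponent sets and merely recentres the local polynomials, carrying $\bar\rho$ to $1$ — I may assume $\bar\rho=1$.

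Next I set $\tilde f(x)=\frac{f(tx)}{a_{i_r}t^{i_r}}$ and $\tilde g(x)=\frac{g(tx)}{b_{j_s}t^{j_s}}$; by Proposition~\ref{Prop:LocalPolys} these lie in $V[x]$, reduce mod $\m$ to $\loc_{f,t}$ and $\loc_{g,t}$, and (as $v(a_{i_r}t^{i_r})=\minval_f(\alpha)$ and $v(b_{j_s}t^{j_s})=\minval_g(\alpha)$) satisfy $v(f(tu))=\minval_f(\alpha)+v(\tilde f(u))$ and $v(g(tu))=\minval_g(\alpha)+v(\tilde g(u))$ for all $u$. Expanding $\tilde f(1+x)=\sum_k c_k x^k$ and $\tilde g(1+x)=\sum_k d_k x^k$ with $c_k,d_k\in V$ and reducing mod $\m$, the fact that $1$ is a root of $\loc_{f,t}$ of multiplicity exactly $m$ and of $\loc_{g,t}$ of multiplicity exactly $m'<m$ gives $v(c_k)>0$ for $k<m$, $v(c_m)=0$, $v(d_k)>0$ for $k<m'$, and $v(d_{m'})=0$. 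Hence $\minval_{\tilde g(1+x)}(\eta)\le m'\eta$ for every $\eta$, while $\minval_{\tilde f(1+x)}(\eta)=\min_{k:\,c_k\ne 0}(v(c_k)+k\eta)>m'\eta$ whenever $\eta\in\Gamma$ is positive with $m'\eta<v(c_k)$ for every $k<m'$ with $c_k\ne 0$ — the terms with $k\ge m'$ already beat $m'\eta$ because $v(c_{m'})>0$ (as $m'<m$), and those with $k<m'$ beat it by the choice of $\eta$.

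It remains to produce such an $\eta$ and the evaluation point. Since $\m$ is not principal, the positive part of $\Gamma$ has no least element, and iterating this (a halving argument) shows that for any positive $\nu\in\Gamma$ and any positive integer $n$ there is a positive $\eta\in\Gamma$ with $n\eta<\nu$; fix such an $\eta$ making $\minval_{\tilde f(1+x)}(\eta)>m'\eta$. Because $V/\m$ is algebraically closed, hence infinite, Corollary~\ref{Cor:AttainMinval} applied to $\tilde f(1+x),\tilde g(1+x)$ gives $w\in K$ with $v(w)=\eta$, $v(\tilde f(1+w))=\minval_{\tilde f(1+x)}(\eta)$, and $v(\tilde g(1+w))=\minval_{\tilde g(1+x)}(\eta)$. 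Setting $a=t(1+w)$ gives $v(a)=v(t)=\alpha$, and
\[
v(\varphi(a))-\minval_\varphi(\alpha)=v(\tilde f(1+w))-v(\tilde g(1+w))=\minval_{\tilde f(1+x)}(\eta)-\minval_{\tilde g(1+x)}(\eta)\ge \minval_{\tilde f(1+x)}(\eta)-m'\eta>0,
\]
which is the claim. The only genuinely delicate step — and the sole place the non-principality of $\m$ is used — is the construction of a positive but sufficiently small $\eta\in\Gamma$; this is precisely why the statement fails for valuation domains with principal maximal ideal.
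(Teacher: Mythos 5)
Your proof is correct and follows essentially the same strategy as the paper's: reduce $c_1<c_2$ to $c_1>c_2$ via $1/\varphi$, use algebraic closure of $V/\m$ to locate a nonzero residue where the root multiplicity of $\loc_{f,t}$ strictly exceeds that of $\loc_{g,t}$, and then perturb the evaluation point by an element of sufficiently small positive value, which exists precisely because $\m$ is not principal. The only cosmetic difference is that you recentre the residue to $1$ and invoke Corollary~\ref{Cor:AttainMinval} to control the perturbed evaluations, where the paper instead writes the explicit lift $(x-u)^{e_\ell}f_1(x)+f_2(x)$ and computes directly.
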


\begin{proof}
	Write $\varphi = \frac{f}{g}$ for some $f, g \in K[x]$. Let $b$ be an element of $K$ such that $v(b) = \alpha$. Then we can completely factor $\loc_{f,b}(x)$ and $\loc_{g, b}(x)$ as $V/\m$ is algebraically closed, so those polynomials have the forms
	\[
	\loc_{f,b}(x) = x^i(x- \xi_1)^{e_1} \cdots (x- \xi_n)^{e_n} \quad \text{and} \quad \loc_{g,b}(x) = x^j(x- \xi_1)^{e_1'} \cdots (x- \xi_n)^{e_n'},
	\]
	where each $\xi_k \in V/\m$ are nonzero and $i, j, e_k, e_k' \in \N$ for $k = 1,2, \dots, n$. 
	
	Now by Proposition \ref{Prop:FormOfMinval}, we note that there exists some $\epsilon' \in \Gamma$ with $0 < \epsilon' \leq \epsilon$ such that
	\[
	\minval_f(\gamma) = \begin{cases}
		d_1\gamma + \delta_1, & \text{ if $\alpha - \epsilon' \leq \gamma \leq \alpha$,}\\
		d_2\gamma + \delta_2, & \text{ if $\alpha \leq \gamma \leq \alpha + \epsilon'$}
	\end{cases}
	\]
	and
	\[
	\minval_g(\gamma) = \begin{cases}
		d_1'\gamma + \delta_1', & \text{ if $\alpha - \epsilon' \leq \gamma \leq \alpha$,}\\
		d_2'\gamma + \delta_2', & \text{ if $\alpha \leq \gamma \leq \alpha + \epsilon'$,}
	\end{cases}
	\]
	for some $d_1, d_2, d_1', d_2' \in \N$ and $\delta_1, \delta_2, \delta_1', \delta_2' \in \Gamma$. From this, we get that
	\[
	d_1 - d_1' = c_1 \quad \text{and} \quad d_2 - d_2' = c_2
	\]
	by Lemma \ref{Lem:PowersOfLoc}. We also have
	\[
	(i + e_1 + \cdots + e_n) - i = d_1 - d_2 \quad \text{and} \quad (j + e_1' + \cdots + e_n') - j = d_1' - d_2'
	\]
	by Proposition \ref{Prop:FormOfMinval}. Putting it all together, we get that
	\[
	(e_1 + \cdots + e_n ) - (e_1' + \cdots + e_n') = c_1 - c_2 > 0. 
	\]
	This means that there exists $\ell \in \{1, \dots, n\}$ such that $e_\ell > e_\ell'$. 
	
	Let $u \in V$ be such that $u + \m = \xi_\ell$. We will lift $\loc_{f,b}(x)$ and $\loc_{g,b}(x)$ back to $V[x]$. We now have
	\[
	\frac{f(bx)}{t} = (x-u)^{e_\ell}f_1(x) + f_2(x) \quad \text{and} \quad \frac{g(bx)}{t'} = (x-u)^{e_\ell'}g_1(x) + g_2(x),
	\]
	where $t, t' \in K$ such that $v(t) = \minval_f(v(b))$ and $v(t') = \minval_g(v(b))$,  $f_1, g_1 \in V[x]$ such that $f_1(u), g_1(u) \notin \m$, and $f_2, g_2 \in \m[x]$. 
	
	Let $h \in \m$ such that $v(h) < \frac{1}{e_\ell}\min\{\minval_{f_2}(0), \minval_{g_2}(0) \}$. This is possible since $\m$ is not principal. Set $a \coloneqq b(u+h)$. Note that $v(a) = v(b) = \alpha$. We then calculate
	\[
	\varphi(a) = \frac{t(h^{e_\ell}f_1(u+h) + f_2(u+h))}{t'(h^{e_\ell'}g_1(u+h) + g_2(u+h))}.
	\]
	Then, $v(\varphi(a)) = \minval_{\varphi}(\alpha) + (e_\ell - e_\ell')v(h) > \minval_{\varphi}(\alpha)$, as desired. 
	
	If $c_1 < c_2$, then apply the $c_1 > c_2$ case to $\frac{1}{\varphi}$ to get the desired result.
\end{proof}

We use the previous lemma to determine that $\IntR(V)$ is not a Prüfer domain for $V$ a valuation domain with algebraically closed residue field and maximal ideal that is not principal.

\begin{theorem}\label{Thm:IntROverNonMonicNonSingularValDomIsNotPrufer}
	Suppose that $V/\m$ is algebraically closed and $\m$ is not a principal ideal of $V$. Then $\IntR(V)$ is not Prüfer.
\end{theorem}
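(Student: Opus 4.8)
The plan is to exhibit a finitely generated ideal of $\IntR(V)$ that is not invertible, which shows $\IntR(V)$ is not Prüfer. The natural candidate is a two-generated ideal built from a unit-valued-type rational function whose minimum valuation function has a "convex corner" — that is, a point $\alpha$ where the slope $c_1$ to the left exceeds the slope $c_2$ to the right. By Lemma \ref{Lem:ValuationLemma}, such a corner forces the existence of some $a$ with $v(a) = \alpha$ and $v(\varphi(a)) > \minval_\varphi(\alpha)$, i.e. the rational function "dips" below the envelope of its monomial valuations exactly at the corner. The idea is that this dipping behavior is incompatible with invertibility: near $\alpha$, the function $\varphi$ and a suitable comparison function $\psi$ cannot be simultaneously "reconciled" by the ring, because the residue field is algebraically closed (so there are no nonconstant unit-valued polynomials to help) and $\m$ is not principal (so one cannot normalize by a uniformizer).

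Concretely, I would first pick a rational function $\varphi = \frac{f}{g} \in \IntR(V)$ whose minimum valuation function genuinely has at least two slopes — for instance something like $\varphi(x) = \frac{1}{x^2 + t^2}$ type constructions modified so that $\varphi \in \IntR(V)$ (one must check integer-valuedness using Lemma \ref{Lem:MinvalEqualityAE} and Proposition \ref{Prop:LocalPolysRoots}, verifying that at the finitely many exceptional values the local polynomials of numerator and denominator behave compatibly). Then form the ideal $\mathfrak{a} = (\varphi_1, \varphi_2)$ of two functions sharing a corner at a common $\alpha \in \Gamma$, or alternatively work with the ideal $(\varphi - r, \varphi' )$ for appropriate choices. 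To prove non-invertibility, I would suppose $\mathfrak{a}$ is invertible, hence locally principal at every prime; localize at a pointed maximal ideal $\M_{\m, a}$ where $a$ realizes the dip guaranteed by Lemma \ref{Lem:ValuationLemma}. Local principality would give an element of $\mathfrak{a} \IntR(V)_{\M_{\m,a}}$ generating the rest, and comparing valuations of the generators at arguments $s$ with $v(s)$ slightly less than, equal to, and slightly greater than $\alpha$ — using that the two generators have different slope-pairs at the corner — yields a contradiction: the generator that wins on one side loses on the other, but invertibility would force one generator to divide the other in the localization.

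The main obstacle is the bookkeeping at the corner: one must package the slope information from Lemma \ref{Lem:PowersOfLoc} and Lemma \ref{Lem:ValuationLemma} into a clean statement that localizing at the right pointed maximal ideal detects a genuine obstruction to $\mathfrak{a}$ being principal. In particular, the delicate point is that invertibility of $\mathfrak{a}$ at $\M_{\m,a}$ is equivalent to $\mathfrak{a}\IntR(V)_{\M_{\m,a}}$ being principal, and one needs the dip phenomenon (valuation strictly exceeding $\minval_\varphi(\alpha)$) to translate into the failure of any single generator to dominate — this is where algebraic closedness of $V/\m$ and non-principality of $\m$ are both essential, since they are exactly the hypotheses powering Lemma \ref{Lem:ValuationLemma}. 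I expect the construction of the two generators and the verification that their minimum valuation functions have the required incompatible corner structure to be the crux; once that is set up, the localization argument should go through by a direct valuation comparison at three nearby arguments $s$ with $v(s) \in \{\alpha - \epsilon', \alpha, \alpha + \epsilon'\}$.
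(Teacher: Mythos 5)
Your overall target --- exhibiting a finitely generated non-invertible ideal --- is the right one, and the paper's proof indeed does this with the ideal $(x,d)$ for $d \in \m$. But your concrete route has a gap that is not just a matter of bookkeeping: you propose to detect non-invertibility by localizing at a pointed maximal ideal $\M_{\m,a}$ and showing the extended ideal is not principal there. This cannot work in general. When the value group $\Gamma$ is not divisible (which is entirely consistent with $V/\m$ algebraically closed and $\m$ not principal), Proposition \ref{Prop:PointedMaximalIdealsAreEssential} shows that $\IntR(V)_{\M_{\m,a}}$ is a \emph{valuation domain} for every pointed maximal ideal, so every finitely generated ideal becomes principal in every such localization. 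The obstruction to invertibility simply does not live at pointed maximal ideals in that case (the paper later locates it at an ultrafilter limit of pointed maximal ideals), so your localization step would never produce a contradiction.

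The paper's argument avoids localization entirely: it assumes $(x,d)$ invertible, writes $x\varphi + d\psi = 1$ with $\varphi,\psi \in (x,d)^{-1}$, and evaluates at a point $b$ with $v(b)=v(d)$ chosen via Proposition \ref{Prop:AttainMinvalRat} so that the minimum valuation functions are attained. This forces one of the two, call it $\rho$, to satisfy $\minval_\rho(v(d)) = -v(d)$. Then, using that $\m$ is not principal to find test points with value slightly below and slightly above $v(d)$, the membership conditions $x\rho, d\rho \in \IntR(V)$ force the left slope $c$ of $\minval_\rho$ at $v(d)$ to satisfy $c \le -1$ and the right slope $c'$ to satisfy $c' \ge 0$, i.e.\ $c < c'$ --- a \emph{concave} corner. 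Lemma \ref{Lem:ValuationLemma} (in the $c_1 < c_2$ direction, giving $v(\rho(a)) < \minval_\rho(v(d))$, not the $>$ direction you cite) then produces $a$ with $v(a\rho(a)) < 0$, contradicting $x\rho \in \IntR(V)$. Note also that the corner analysis applies to the hypothetical \emph{inverse} elements, not to cleverly constructed generators with prescribed corners; you do not need to (and should not try to) build generators like $\frac{1}{x^2+t^2}$ at all. Your second, vaguer suggestion of "comparing valuations of the generators at arguments slightly less than, equal to, and slightly greater than $\alpha$" is in spirit what the paper does to the inverse elements, but as written your proposal neither specifies the ideal nor supplies the step that turns the slope inequality $c < c'$ into a violation of integer-valuedness.
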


\begin{proof}
	Aiming for a contradiction, we assume that $\IntR(V)$ is Prüfer.
	
	Let $d \in \m$. Since $\IntR(V)$ is Prüfer, the finitely-generated ideal $(x, d)$ is invertible. This means that there are $\varphi, \psi \in (x, d)^{-1}$ such that $x\varphi + d \psi =1$. By Proposition \ref{Prop:AttainMinvalRat}, there exists some $b \in V$ such that $v(b) = v(d)$ and $v(\varphi(b)) = \minval_\varphi(v(d))$ and $v(\psi(b)) = \minval_{\psi}(v(d))$. Evaluating $x\varphi + d \psi =1$ at $x = b$, we obtain
	\[
	b\varphi(b) + d\psi(b) = 1.	
	\]
	
	We have $x\varphi, d\psi \in \IntR(V)$, so $b\varphi(b), d\psi(b) \in V$. Thus, we have $v(b\varphi(b)) = 0$ or $v(d\psi(b)) = 0$. Then we get $\minval_\varphi(v(d)) = v(\varphi(b)) = -v(b) = -v(d)$ or $\minval_{\psi}(v(d)) = v(\psi(b)) = -v(d)$. 
	
	Either way, we have some function $\rho \in (x, d)^{-1}$ such that  $v(b\rho(b)) = 0$ and $v(\rho(b)) = \minval_{\rho}(v(d)) = -v(d)$. By Proposition \ref{Prop:FormOfMinvalRat}, there exists some $\varepsilon \in \Q\Gamma$ with $\varepsilon > 0$ such that there exist some $c, c' \in \Z$ and $\beta, \beta' \in \Gamma$ so that
	\[
	\minval_\rho(\gamma) = \begin{cases}
		c\gamma + \beta, & v(d) - \epsilon \leq \gamma \leq v(d), \\
		c'\gamma + \beta', & v(d) \leq \gamma \leq v(d) + \epsilon.
	\end{cases}
	\]
	Since $\m$ is not principal, due to Lemma \ref{Lem:MinvalEqualityAE}, there exists $b' \in V$ with the property that $v(d) - \epsilon < v(b') < v(d)$ and $v(\rho(b')) = cv(b') + \beta$. Since $x\rho \in \IntR(V)$, we have 
	\[
	v(b'\rho(b')) = v(b') + cv(b') + \beta = (c+1)v(b') + \beta \geq 0 = v(b\rho(b)) = (c+1)v(b) + \beta.
	\]
	This implies $(c+1)v(b') \geq (c+1)v(b)$ and thus $0 \geq (c+1)(v(b) - v(b'))$. We know that $v(b) > v(b')$ so we must have $c + 1 \leq 0$. In other words, $c \leq -1$. 
	
	Now there exists $b'' \in V$ so that $v(d) < v(b'') < v(d) + \epsilon$ and $v(\rho(b'')) = c'v(b'') + \beta'$. Because $d\rho \in \IntR(V)$, we get that 
	\[
	v(d\rho(b''))) = v(d) + c'v(b'') + \beta' \geq 0 = v(b\rho(b)) =  v(d) + c'v(b) + \beta'.	
	\]
	Thus, $c'v(b'') \geq c'v(b)$. This implies that $c' \geq 0$ since $v(b'') > v(b)$.

	Since $c < c'$, there exists $a \in V$ such that $v(\rho(a)) < \minval_\rho(v(d)) = -v(d)$ and $v(a) = v(d)$ according to Lemma \ref{Lem:ValuationLemma}. This implies that $v(a\rho(a)) < 0$, contradicting the fact that $x\rho \in \IntR(V)$. We can conclude that $(x, d)$ is not invertible, so $\IntR(V)$ cannot be Prüfer.
\end{proof}

This result combined with the results of \cite{PruferNonDRings, IntValuedRational} completely classifies the case of when $\IntR(V)$ is a Prüfer domain given that $V$ is a valuation domain. 

\begin{corollary}\label{Cor:IntRVClassification}
	Let $V$ be a valuation domain. Then $\IntR(V)$ is a Prüfer domain if and only if $V/\m$ is not algebraically closed or $\m$ is a principal ideal of $V$.
\end{corollary}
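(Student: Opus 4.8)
The plan is to assemble the corollary from results already in hand, since it is precisely the two-sided combination of the earlier ``monic or singular'' corollary with Theorem~\ref{Thm:IntROverNonMonicNonSingularValDomIsNotPrufer}. No new computation is needed; the work is in recording which prior statement supplies each implication.

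For the ``if'' direction I would argue as follows. Suppose $V/\m$ is not algebraically closed or $\m$ is a principal ideal of $V$. By the Remark identifying these conditions, the first case says $V$ is a monic Prüfer domain, so the theorem of \cite{PruferNonDRings} gives that $\IntR(V)$ is a Prüfer domain (indeed with torsion Picard group); the second case says $V$ is a singular Prüfer domain, so the theorem of \cite{IntValuedRational} gives that $\IntR(V)$ is a B\'ezout domain, hence in particular Prüfer. In either case $\IntR(V)$ is Prüfer. This is exactly the content of the Corollary already stated just after that Remark, so I would simply cite it.

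For the ``only if'' direction I would prove the contrapositive: if $V/\m$ is algebraically closed and $\m$ is not a principal ideal of $V$, then $\IntR(V)$ is not Prüfer. But this is precisely Theorem~\ref{Thm:IntROverNonMonicNonSingularValDomIsNotPrufer}. Hence, if $\IntR(V)$ is Prüfer, the hypotheses of that theorem must fail, i.e.\ $V/\m$ is not algebraically closed or $\m$ is principal, which is the desired conclusion. Combining the two directions completes the equivalence.

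There is no real obstacle here: the substantive step was Theorem~\ref{Thm:IntROverNonMonicNonSingularValDomIsNotPrufer}, whose proof rested on the minimum valuation function machinery (Propositions~\ref{Prop:FormOfMinval}, \ref{Prop:FormOfMinvalRat}, \ref{Prop:AttainMinvalRat} and Lemma~\ref{Lem:ValuationLemma}), together with the cited Prüfer/B\'ezout results of \cite{PruferNonDRings, IntValuedRational}. The corollary itself is just the packaging of those facts into a clean dichotomy, so I expect the proof to be a single short paragraph citing the Corollary for one direction and Theorem~\ref{Thm:IntROverNonMonicNonSingularValDomIsNotPrufer} (contrapositively) for the other.
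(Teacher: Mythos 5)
Your proposal is correct and matches the paper exactly: the paper gives no separate proof, deriving the corollary by combining the earlier "monic or singular" corollary (via the cited theorems of \cite{PruferNonDRings, IntValuedRational}) for the "if" direction with Theorem \ref{Thm:IntROverNonMonicNonSingularValDomIsNotPrufer} for the contrapositive of the "only if" direction. Nothing is missing.
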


In fact, for a valuation domain $V$, most of the time when $\IntR(V)$ is Prüfer, the ring $\IntR(V)$ is also Bézout. We know that $\IntR(V)$ is Bézout when $V$ has a principal maximal ideal or there exist two nonconstant, monic, unit-valued polynomials over $V$ of coprime degrees \cite[Theorem 3.5 and Corollary 3.3]{IntValuedRational}. Note that the latter condition is equivalent to saying that there exist two nonconstant polynomials of coprime degrees over $V/\m$ with no roots in $V/\m$, where $\m$ is the maximal ideal of $V$. We will now completely characterize when the ring integer-valued rational functions over a valuation domain is a Bézout domain. We first require a lemma about the minimum valuation functions of generators of finitely-generated ideals in $\IntR(V)$. 

\begin{lemma}\label{Lem:MinMinval}
	Suppose $V/\m$ is infinite. Let $\varphi_1, \dots, \varphi_n, \psi_1, \dots, \psi_m \in \IntR(V)$ be nonzero integer-valued rational functions such that
	\[
	(\varphi_1, \dots, \varphi_n) = (\psi_1, \dots, \psi_m)
	\]
	as ideals of $\IntR(V)$. Then
	\[
	\min\{ \minval_{\varphi_1}(\gamma), \dots, \minval_{\varphi_n}(\gamma) \} = \min\{ \minval_{\psi_1}(\gamma), \dots, \minval_{\psi_m}(\gamma) \}
	\]
	for all $\gamma \in \Gamma$ such that $\gamma \geq 0$. 
\end{lemma}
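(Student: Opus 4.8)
The plan is to show each side of the claimed equality is bounded above by the other; by symmetry it suffices to prove $\min_i \minval_{\varphi_i}(\gamma) \le \min_j \minval_{\psi_j}(\gamma)$ for all $\gamma \ge 0$. Fix such a $\gamma$, and fix an index $j$ achieving the minimum on the right. Since $\psi_j \in (\varphi_1,\dots,\varphi_n)$, write $\psi_j = \sum_{i=1}^n h_i \varphi_i$ with each $h_i \in \IntR(V)$. The idea is to evaluate this identity at a carefully chosen point $a \in K$ with $v(a) = \gamma$, using the freedom provided by an infinite residue field.

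First I would invoke Proposition~\ref{Prop:AttainMinvalRat} applied to the finite collection $\varphi_1,\dots,\varphi_n,\psi_1,\dots,\psi_m,h_1,\dots,h_n$ (discarding any that are zero): this produces $a \in K$ with $v(a)=\gamma$ such that $v(\varphi_i(a)) = \minval_{\varphi_i}(\gamma)$, $v(\psi_j(a)) = \minval_{\psi_j}(\gamma)$, and $v(h_i(a)) = \minval_{h_i}(\gamma)$ for all $i$. Now evaluate $\psi_j = \sum_i h_i\varphi_i$ at $x=a$: we get $v(\psi_j(a)) \ge \min_i\{v(h_i(a)) + v(\varphi_i(a))\} \ge \min_i v(\varphi_i(a))$, where the last step uses that $h_i \in \IntR(V)$ and $v(a) = \gamma \ge 0$ so that $h_i(a) \in V$, i.e.\ $v(h_i(a)) \ge 0$. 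Hence
\[
\min_j \minval_{\psi_j}(\gamma) = \minval_{\psi_j}(\gamma) = v(\psi_j(a)) \ge \min_i v(\varphi_i(a)) = \min_i \minval_{\varphi_i}(\gamma) \ge \min_i \minval_{\varphi_i}(\gamma),
\]
which is the desired inequality. Swapping the roles of the $\varphi$'s and $\psi$'s gives the reverse inequality, and equality follows.

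The main subtlety to get right is the role of the hypothesis $\gamma \ge 0$: it is exactly what makes $v(h_i(a)) \ge 0$ (since $a \in V$ forces $h_i(a) \in V$), which is what lets us drop the $h_i$ contribution from the valuation estimate. Without $\gamma \ge 0$ the point $a$ would lie outside $V$ and there would be no control on $v(h_i(a))$. A minor point is to make sure Proposition~\ref{Prop:AttainMinvalRat} is applied to the full finite list including the coefficient functions $h_i$ (and that a single common choice of $a$ works for all of them simultaneously, which is precisely what that proposition delivers), and to handle trivially the case where some generator is $0$ by simply omitting it. I do not expect any real obstacle here — the content is entirely in Proposition~\ref{Prop:AttainMinvalRat} and the fact that $\IntR(V)$-coefficients are $V$-valued on $V$.
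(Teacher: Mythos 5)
Your proposal is correct and follows essentially the same route as the paper: both choose, via Proposition~\ref{Prop:AttainMinvalRat} and the infinite residue field, a single point $a$ with $v(a)=\gamma\ge 0$ at which all the relevant minimum valuation functions are attained, and then use that evaluation at $a\in V$ carries the ideal equality in $\IntR(V)$ to an equality of ideals (equivalently, of minimal valuations) in $V$. The only cosmetic differences are that you spell out the linear-combination estimate that the paper compresses into the statement $(\varphi_1(a),\dots,\varphi_n(a))=(\psi_1(a),\dots,\psi_m(a))$, and that applying Proposition~\ref{Prop:AttainMinvalRat} to the coefficients $h_i$ is unnecessary since $v(h_i(a))\ge 0$ already follows from $h_i\in\IntR(V)$ and $a\in V$.
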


\begin{proof}
	Let $\gamma \in \Gamma$ with $\gamma \geq 0$. Since $V$ has an infinite residue field, there exists $d \in V$ such that $v(d) = \gamma$ and $\minval_{\varphi_i}(\gamma) = v(\varphi_i(d)), \minval_{\psi_j}(\gamma) = v(\psi(d))$
	for all $i$ and $j$ by Proposition \ref{Prop:AttainMinvalRat}. Because $(\varphi_1(d), \dots, \varphi_n(d)) = (\psi_1(d), \dots, \psi_m(d))$, it follows that
	\[
	\min\{ v(\varphi_1(d)), \dots, v(\varphi_n(d)) \} = \min\{ v(\psi_1(d)), \dots, v(\psi_m(d)) \}.
	\]

	Therefore, we obtain
	\[
	\min\{ \minval_{\varphi_1}(\gamma), \dots, \minval_{\varphi_n}(\gamma) \} = \min\{ \minval_{\psi_1}(\gamma), \dots, \minval_{\psi_m}(\gamma) \}.
	\]
\end{proof}

Now we characterize when $\IntR(V)$ is a Bézout domain.

\begin{proposition}\label{Prop:MonicNotBezoutCase}
	Suppose that $\m$ is not principal and there does not exist two nonconstant polynomials of coprime degrees over $V/\m$ with no roots in $V/\m$. Then $\IntR(V)$ is not Bézout. 
\end{proposition}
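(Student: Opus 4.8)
The plan is to reduce the statement to a single essential configuration and then extract a divisibility contradiction from the breakpoint structure of minimum valuation functions. First I would dispose of the degenerate case: if $V/\m$ is algebraically closed, then Theorem~\ref{Thm:IntROverNonMonicNonSingularValDomIsNotPrufer} already shows $\IntR(V)$ is not even Prüfer, hence not Bézout. So assume $V/\m$ is not algebraically closed, and let $S\subseteq\Z_{\ge 1}$ be the set of degrees of polynomials over $V/\m$ with no root in $V/\m$. Then $S\ne\emptyset$ and $S$ is closed under addition (a product of root-free polynomials is root-free), so if $\gcd(S)=1$ the subsemigroup generated by $S$ — which is $S\cup\{0\}$ — would contain two consecutive, hence coprime, integers, contradicting the hypothesis. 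Therefore $e:=\gcd(S)\ge 2$, and every root-free polynomial over $V/\m$ has degree in $e\Z_{\ge 0}$. Moreover $V/\m$ is necessarily infinite, since over a finite field there are irreducible (hence root-free) polynomials of degrees $2$ and $3$; being infinite, $V/\m$ lets us apply Lemma~\ref{Lem:MinMinval}, Proposition~\ref{Prop:AttainMinvalRat}, and Lemma~\ref{Lem:MinvalEqualityAE}.

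Now assume for contradiction that $\IntR(V)$ is Bézout and fix a nonzero $d\in\m$. Then $(x,d)$ is principal, say $(x,d)=(\rho)$ with $\rho\in\IntR(V)$, and Lemma~\ref{Lem:MinMinval} gives $\minval_\rho(\gamma)=\min\{\gamma,v(d)\}$ for all $\gamma\ge 0$, a function with a single breakpoint $\gamma_0:=v(d)$ at which its slope drops from $1$ to $0$. Write $\rho=f/g$ with $f,g\in K[x]$ coprime; since $\rho$, $x/\rho$ and $d/\rho$ all lie in $\IntR(V)$ (the latter two because $(\rho)=(x,d)$), one may take $f,g\in V[x]$ primitive with $f(0),g(0)\ne 0$. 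Choose $t\in K$ with $v(t)=\gamma_0$ and consider the local polynomials $\loc_{f,t},\loc_{g,t}\in (V/\m)[x]$. By Proposition~\ref{Prop:FormOfMinval} and Lemma~\ref{Lem:PowersOfLoc}, the slope drop of $\minval_\rho$ at $\gamma_0$ equals $w_f-w_g$, where $w_f:=\deg\loc_{f,t}-\mathrm{ord}_0\loc_{f,t}$ and similarly $w_g$; hence $w_f-w_g=1$.

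On the other hand, $\loc_{f,t}$ and $\loc_{g,t}$ must have the same linear factors over $V/\m$, counted with multiplicity. Indeed, if $\loc_{g,t}$ had a root $\bar u\in V/\m$ of strictly larger multiplicity than it has in $\loc_{f,t}$, then one could perturb a lift of $tu$ within its residue class — using that $\m$ is not principal, as in the proof of Lemma~\ref{Lem:ValuationLemma} (cf.\ Proposition~\ref{Prop:LocalPolysRoots}) — to make $v(\rho)$ strictly smaller than $\minval_\rho(\gamma_0)$ and, taking the perturbation large enough, strictly negative, contradicting $\rho\in\IntR(V)$; symmetrically, a root of $\loc_{f,t}$ of too large a multiplicity contradicts $d/\rho=(dg)/f\in\IntR(V)$, whose numerator $dg$ has local polynomial $\loc_{g,t}$ at $t$. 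Writing $\loc_{f,t}=L\,F_0$ and $\loc_{g,t}=L\,G_0$ with $L$ the common product of linear factors (including powers of $x$) and $F_0,G_0$ root-free over $V/\m$, we get $w_f-w_g=\deg F_0-\deg G_0$, so $\deg F_0-\deg G_0=1$. But $\deg F_0,\deg G_0\in S\cup\{0\}\subseteq e\Z_{\ge 0}$ with $e\ge 2$, so their difference cannot equal $1$ — contradiction. Hence $(x,d)$ is not principal and $\IntR(V)$ is not Bézout.

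I expect the main obstacle to be the claim that $\loc_{f,t}$ and $\loc_{g,t}$ share their linear factors: this requires translating membership in $\IntR(V)$ precisely into a statement about the split parts of local polynomials, and arranging the perturbation so that $v(\rho)$ (resp.\ $v(d/\rho)$) is driven below $0$, not merely below $\minval_\rho(\gamma_0)$. Since $V$ need not be Henselian, making this rigorous will probably require combining the perturbation with coprimality of $f$ and $g$ — ruling out extra multiplicities hidden in irreducible factors of degree $\ge 2$ — perhaps after passing to the Henselization; the non-principality of $\m$ is exactly what makes the argument go through, mirroring its role in Lemma~\ref{Lem:ValuationLemma} and Theorem~\ref{Thm:IntROverNonMonicNonSingularValDomIsNotPrufer}.
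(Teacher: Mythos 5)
Your overall strategy is the same as the paper's: assume $(x,d)=(\rho)$, use Lemma \ref{Lem:MinMinval} to pin down $\minval_\rho(\gamma)=\min\{\gamma,v(d)\}$, read off from Lemma \ref{Lem:PowersOfLoc} that $\deg\loc_{f,t}-\deg\loc_{g,t}=1$ while the orders at $0$ agree, argue that the two local polynomials must have identical nonzero roots with identical multiplicities, and then derive a contradiction from the root-free quotients having degrees differing by $1$. Your $\gcd(S)=e\ge 2$ packaging of the endgame is a correct equivalent of the paper's observation that the quotients $F$ and $G$ would be nonconstant root-free polynomials of the coprime degrees $\deg G+1$ and $\deg G$, and your preliminary reductions (infinite residue field, discarding the algebraically closed case) are fine.

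There is, however, one genuine gap, in the step you yourself flag. When some $\bar u$ is a root of $\loc_{g,t}$ of multiplicity $c_2$ strictly larger than its multiplicity $c_1$ in $\loc_{f,t}$, you propose to contradict $\rho\in\IntR(V)$ by driving $v(\rho(t(u+h)))$ below $0$ ``by taking the perturbation large enough.'' This fails: the identity $v(\rho(t(u+h)))=(c_1-c_2)v(h)+v(d)$ is only valid for $v(h)$ \emph{small}, namely $v(h)<\frac{1}{\max\{c_1,c_2\}}\min\{\minval_{f_2}(0),\minval_{g_2}(0)\}$ in the notation of Lemma \ref{Lem:ValuationLemma}, and that bound can be far smaller than $v(d)/(c_2-c_1)$, so the deviation need never push the value below $0$; enlarging $v(h)$ instead just sends $t(u+h)$ toward $tu$ and $\rho(t(u+h))$ toward a fixed value. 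The correct contradiction --- the one the paper uses --- comes from the inclusion $(\rho)\subseteq(x,d)$, which you never invoke (you only use $(x,d)\subseteq(\rho)$ to obtain $x/\rho,\,d/\rho\in\IntR(V)$). Since $\rho$ generates $(x,d)$, for every $a\in V$ with $v(a)=v(d)$ one has $\rho(a)V=(a,d)V=dV$, hence $v(\rho(a))=v(d)$ exactly; so \emph{any} nonzero deviation $(c_1-c_2)v(h)$, in either direction and no matter how small, is already a contradiction. With that one-line repair your argument closes. Your closing worry about Henselization and multiplicities hidden in irreducible factors of degree $\ge 2$ is unfounded: the perturbation only uses the factorization of the local polynomial over $V/\m$ together with a lift satisfying $f_1(u)\notin\m$, exactly as in Lemma \ref{Lem:ValuationLemma}.
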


\begin{proof}
	Let $t \in \m$. We want to show that the finitely-generated ideal $(x,t)$ of $\IntR(V)$ is not principal. Suppose on the contrary that $(x,t) = (\varphi)$ for some $\varphi \in \IntR(V)$.

	Note that $V/\m$ is necessarily an infinite field, so by Lemma \ref{Lem:MinMinval}, 
	\[
	\minval_\varphi(\gamma) = \min\{\minval_x(\gamma), \minval_t(\gamma) \} = \begin{cases}
		\gamma, & \text{if $0 \leq \gamma \leq v(t)$},\\
		v(t), & \text{if $\gamma \geq v(t)$},
	\end{cases}
	\]
	for each $\gamma \in \Gamma$ such that $\gamma \geq 0$. Therefore, if we write $\varphi = \frac{f}{g}$ for some $f, g \in V[x]$, then $\deg (\loc_{f, t}) = \deg (\loc_{g, t}) + 1$ and the degree of the lowest degree monomial of $\loc_{f, t}$ and $\loc_{g, t}$ are the same by Lemma \ref{Lem:PowersOfLoc}. Plus, we claim it is impossible for every nonzero element of $V/\m$ to be a root of $\loc_{f, t}$ and $\loc_{g, t}$ of the same multiplicity. Here, we allow for the possibility that an element is a root of multiplicity $0$, meaning it is not a root. Suppose $\xi_1, \dots, \xi_n$ are the nonzero roots of $\loc_{f, t}$ and $\loc_{g, t}$ with each $\xi_i$ appearing in $\loc_{f, t}$ and $\loc_{g, t}$ with multiplicity $e_i$. Then the polynomials $F(x) = \frac{\loc_{f, t}(x)}{x^m(x-\xi_1)^{e_1}\cdots(x-\xi_n)^{e_n}}$ and $G(x) = \frac{\loc_{g, t}(x)}{x^m(x-\xi_1)^{e_1}\cdots(x-\xi_n)^{e_n}}$ both have no roots over $V/\m$. Moreover, since $\deg(F) = \deg(G) + 1 \geq 3$, we know that $\gcd(\deg(F), \deg(G)) = 1$. Thus, the assumption about $V/\m$ is contradicted.  
	
	Thus, there exists some nonzero element $\xi \in V/\m$ such that $\xi$ is a root of $\loc_{f, t}$ of multiplicity $c_1$ and $\xi$ is a root of $\loc_{g, t}$ of multiplicity $c_2$ with $c_1 \neq c_2$.  Let $u \in V$ be a lift of $\xi$. Then by lifting $\loc_{f, t}$, we obtain 
	\[
	\frac{f(tx)}{b} = (x-u)^{c_1}f_1(x) + f_2(x),
	\]
	where $b \in V$ is some element such that $v(b) = \minval_f(v(t))$, $f_1(x) \in V[x]$ is such that $f_1(u) \notin \m$, and $f_2(x) \in \m[x]$. We similarly obtain $\frac{g(tx)}{b'} = (x-u)^{c_2}g_1(x) + g_2(x)$, where $b' \in V$ is such that $v(b') = \minval_g(v(t))$, $g_1(x) \in V[x]$ is such that $g_1(u) \notin \m$, and $g_2(x) \in \m[x]$. Then there exists an element $h \in \m$ such that $v(h) < \frac{1}{\max\{c_1, c_2\}}\min\{\minval_{f_2}(0), \minval_{g_2}(0) \}$ and 
	\[
	v\left(\frac{f(t(u+h))}{b} \right) = v( h^{c_1} f_1(u+h) + f_2(u+h)) = v( h^{c_1} f_1(u+h)) = c_1v(h). 
	\]
	Therefore, $v(f(t(u+h))) = c_1v(h) + \minval_f(v(t))$. A similar calculation yields $v(g(t(u+h))) = c_2v(h) + \minval_g(v(t))$. Now, we have \[v(\varphi(t(u+h))) = (c_1-c_2)v(h) + \minval_\varphi(v(t)) = (c_1-c_2)v(h)+v(t).\] 
	
	We must have $v(\varphi(t(u+h))) = \min\{ v(t(u+h)), v(t) \} = v(t)$ because $\varphi$ generates $(x, t)$. This implies that $c_1 = c_2$, a contradiction. Thus, the existence of $\varphi$ is impossible, meaning that $\IntR(V)$ cannot be Bézout.
\end{proof}

This, along with \cite[Theorem 3.5 and Corollary 3.3]{IntValuedRational}, gives us a complete characterization of when $\IntR(V)$ is a Bézout domain.

\begin{corollary}
	The ring $\IntR(V)$ is a Bézout domain if and only if $\m$ is principal or there exist two nonconstant polynomials of coprime degrees over $V/\m$ with no roots in $V/\m$.
\end{corollary}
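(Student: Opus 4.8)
The plan is to prove the two implications separately. The forward implication (Bézout $\Rightarrow$ dichotomy) will be handled by contraposition, where it becomes exactly Proposition \ref{Prop:MonicNotBezoutCase}; the converse will be assembled from the two cited results of \cite{IntValuedRational} together with the dictionary between unit-valued polynomials over $V$ and root-free polynomials over $V/\m$ supplied by Proposition \ref{Prop:UVNoRoots}. So the corollary is essentially a bookkeeping combination of already-established inputs.

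For the converse ($\Leftarrow$): first suppose $\m$ is principal. By the Remark that a valuation domain is singular precisely when its maximal ideal is principal, $V$ is then a singular Prüfer domain, and the cited theorem that $\IntR$ over a singular Prüfer domain is Bézout gives the conclusion. Next suppose instead that there exist nonconstant $\bar f_1, \bar f_2 \in (V/\m)[x]$ of coprime degrees with no roots in $V/\m$. After scaling each $\bar f_i$ by a unit of $V/\m$ we may take it monic of the same degree with the same (empty) root set; lifting the coefficients arbitrarily yields monic $f_i \in V[x]$ with $f_i \bmod \m = \bar f_i$, and by Proposition \ref{Prop:UVNoRoots} each $f_i$ is unit-valued over $V$. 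Since reduction mod $\m$ of a monic integer polynomial preserves degree, $f_1$ and $f_2$ are two nonconstant monic unit-valued polynomials over $V$ of coprime degrees, so $\IntR(V)$ is Bézout by \cite[Theorem 3.5 and Corollary 3.3]{IntValuedRational}.

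For the forward implication ($\Rightarrow$): I argue by contraposition. Assume $\m$ is not principal and that there do not exist two nonconstant polynomials of coprime degrees over $V/\m$ with no roots in $V/\m$. These are exactly the hypotheses of Proposition \ref{Prop:MonicNotBezoutCase}, which directly yields that $\IntR(V)$ is not Bézout. (This subsumes the case where $V/\m$ is algebraically closed, in which $\IntR(V)$ already fails to be Prüfer by Corollary \ref{Cor:IntRVClassification}, and a fortiori is not Bézout.) Thus there is essentially no obstacle remaining: the only delicate point is the translation in the converse direction — that a nonconstant root-free polynomial over $V/\m$ may be normalized to be monic of the same degree, that a monic lift reduces back to it, and that coprimality of degrees is preserved — and each of these is immediate. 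All the substantive work has been carried out in Proposition \ref{Prop:MonicNotBezoutCase} and in the cited papers.
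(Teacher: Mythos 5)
Your proposal is correct and follows exactly the route the paper intends: the forward direction is the contrapositive of Proposition \ref{Prop:MonicNotBezoutCase}, and the converse combines the cited results of \cite{IntValuedRational} with the (already noted in the paper) equivalence between having two nonconstant monic unit-valued polynomials of coprime degrees over $V$ and having two nonconstant root-free polynomials of coprime degrees over $V/\m$. The paper leaves this assembly implicit; your write-up merely spells out the normalization-and-lifting step via Proposition \ref{Prop:UVNoRoots}.
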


\subsection{When $\IntR(V)$ is not a Prüfer domain}

We have completely classified the conditions on $V$ that make $\IntR(V)$ Prüfer. We consider the case when $\IntR(V)$ is not Prüfer and try to understand in what ways $\IntR(V)$ fails to be Prüfer. 

If $\IntR(V)$ is not Prüfer, then we know that $V$ has algebraically closed residue field and maximal ideal that is not principal. We first consider the case when we additionally assume that the value group that is not divisible. Since $\IntR(V)$ is not Prüfer, there must be some prime ideal $\mathfrak{P}$ of $\IntR(V)$ such that $\IntR(V)_\mathfrak{P}$ is not a valuation domain. We will show that such a prime ideal $\mathfrak{P}$ cannot be a pointed maximal ideal. First, we will need a lemma.

\begin{lemma}\label{Lem:DivideBoth}
	Let $D$ be a domain and $\p$ be a prime ideal. Let $a, b$ be two elements of $K$, the field of fractions of $D$, such that $b \neq 0$. Then $\frac{a}{b} \in D_\p$ if and only if there exists $c \in K$ such that $\frac{a}{c} \in D$ and $\frac{b}{c} \in D \setminus \p$. 
\end{lemma}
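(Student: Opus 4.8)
The plan is to unwind the definition of the localization $D_\p = \{s/u \mid s \in D,\ u \in D \setminus \p\}$ and, in each direction, simply produce the required witness — no auxiliary machinery is needed.

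For the forward implication, I would suppose $\frac{a}{b} \in D_\p$ and write $\frac{a}{b} = \frac{s}{u}$ for some $s \in D$ and $u \in D \setminus \p$. The natural choice of witness is $c = \frac{b}{u} \in K$, which is nonzero since $b \neq 0$ and $u \notin \p$ forces $u \neq 0$. Then $\frac{b}{c} = u \in D \setminus \p$, and from $au = bs$ (which follows from $\frac{a}{b} = \frac{s}{u}$) we get $\frac{a}{c} = \frac{au}{b} = s \in D$, as desired.

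For the reverse implication, I would take $c \in K$ with $\frac{a}{c} \in D$ and $\frac{b}{c} \in D \setminus \p$; since $b \neq 0$ the quotient $\frac{b}{c}$ being defined forces $c \neq 0$, so $\frac{a}{c}$ and $\frac{b}{c}$ are genuine elements of $K$. Then $\frac{a}{b} = \frac{a/c}{b/c}$ exhibits $\frac{a}{b}$ as a quotient of an element of $D$ by an element of $D \setminus \p$, hence $\frac{a}{b} \in D_\p$.

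There is essentially no obstacle; the only thing to state carefully is well-definedness of $c = b/u$ and of the various quotients in $K$, all of which is immediate from $b \neq 0$ and $u \notin \p$. The content of the lemma is really just that membership in $D_\p$ can be tested by finding a common "denominator" $c$ that clears $a$ into $D$ while sending $b$ into $D \setminus \p$; this reformulation is what will later be applied to numerators and denominators of rational functions.
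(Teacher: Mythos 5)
Your proof is correct and is essentially identical to the paper's: in the forward direction the paper also writes $\frac{a}{b}=\frac{r}{s}$ with $r\in D$, $s\in D\setminus\p$ and takes $c=\frac{b}{s}$, and the reverse direction is the same one-line computation $\frac{a}{b}=\frac{a/c}{b/c}$. No issues.
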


\begin{proof}
	Suppose $\frac{a}{b} \in D_\p$. This implies that $\frac{a}{b} = \frac{r}{s}$ for some $r \in D$ and $s \in D \setminus \p$. Now set $c\coloneqq\frac{b}{s}$. We see that $\frac{a}{b/s} = r \in D$ and $\frac{b}{b/s} = s \in D\setminus \p$. 
	
	On the other hand, if there exists $c \in K$ such that $\frac{a}{c} \in D$ and $\frac{b}{c} \in D \setminus \p$, then $\frac{a}{b} = \frac{a/c}{b/c} \in D_\p$. 
\end{proof}

We will show that for a valuation domain $V$ with algebraically closed residue field, maximal ideal that is not principal, and value group that is not divisible that $\IntR(V)$ localized at a pointed maximal ideal is a valuation domain. This is true in a more general setting so we give the result with weaker assumptions. 

\begin{proposition}\label{Prop:PointedMaximalIdealsAreEssential}
	Suppose that $\Gamma$ is not divisible. Let $E$ be a subset of $K$ and take $a \in E$. Then 
	\[\IntR(E, V)_{\mathfrak{M}_{\m, a}} = \{\varphi \in K(x) \mid \varphi(a) \in V \},\] a valuation domain. 
\end{proposition}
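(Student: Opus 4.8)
The plan is to establish two inclusions. Write $R = \IntR(E,V)$ and $S = \{\varphi \in K(x) \mid \varphi(a) \in V\}$. The easy inclusion is $R_{\M_{\m,a}} \subseteq S$: if $\varphi = \psi/\eta$ with $\psi \in R$ and $\eta \in R \setminus \M_{\m,a}$, then $\eta(a) \in V \setminus \m = V^\times$, so $\varphi(a) = \psi(a)/\eta(a) \in V$. The content is the reverse inclusion, together with the claim that $S$ is a valuation domain. For the latter: $S$ is the preimage of the valuation ring $V$ under the ring map $K(x) \to K$, $\varphi \mapsto \varphi(a)$ (defined on rational functions with no pole at $a$), and a preimage of a valuation ring under such an evaluation is itself a valuation ring of $K(x)$ — given any nonzero $\varphi \in K(x)$, either $v(\varphi(a)) \geq 0$ or $v(\varphi(a)) \leq 0$ (with the convention that $\varphi(a) = \infty$, i.e. $\varphi$ has a pole at $a$, is handled by noting $1/\varphi$ then vanishes at $a$ and lies in $S$), so one of $\varphi, 1/\varphi$ lies in $S$.

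For the reverse inclusion $S \subseteq R_{\M_{\m,a}}$, take $\varphi \in K(x)$ with $\varphi(a) \in V$; I want to write $\varphi = \psi/\eta$ with $\psi \in \IntR(E,V)$ and $\eta \in \IntR(E,V)$ with $\eta(a) \in V^\times$. By Lemma \ref{Lem:DivideBoth} applied in the ring $\IntR(E,V)$ (whose fraction field is $K(x)$), it suffices to produce $\rho \in K(x)$ such that $\varphi \rho \in \IntR(E,V)$ and $\rho \in \IntR(E,V)$ with $\rho(a) \in V^\times$. The idea is to clear the (finitely many) poles of $\varphi$ and simultaneously kill the bad behaviour at other points of $E$ using a unit-valued-type denominator built from the non-divisibility of $\Gamma$. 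Concretely: since $\Gamma$ is not divisible, pick $\pi \in V$ with $v(\pi) = \beta > 0$ not divisible by some integer; then for a suitable polynomial $q(x) \in V[x]$ vanishing to high order at $a$ and for a suitable power $N$, the rational function $\rho(x) = \big(q(x) + \pi^N\big)^{-1}$ or a product of such factors will be integer-valued on $E$ (its denominator evaluated at any point of $E$ has valuation $0$ by a $\minval$/local-polynomial argument — the non-divisibility of $\Gamma$ forces $q(e)$ and $\pi^N$ to never have equal valuation at the critical scaling, so no cancellation lowers the valuation), has $\rho(a) \in V^\times$ since $q(a) = 0$, and $\varphi \rho \in \IntR(E,V)$ because multiplying by this denominator both cancels the poles of $\varphi$ and damps its values enough to land in $V$.

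I expect the main obstacle to be making the choice of $\rho$ precise and verifying $\varphi\rho \in \IntR(E,V)$: one must control the valuations of $\varphi$ at all points of $E$, not just at $a$, and show a single denominator (or a finite product of such) works. The role of non-divisibility of $\Gamma$ is exactly to guarantee that an expression like $q(x) + \pi^N$ is "unit-valued" in the relevant sense — its value at every point of $E$ has valuation $0$ — because a divisible value group would allow $v(q(e))$ to equal $N\beta$ and produce cancellation, whereas non-divisibility blocks this for an appropriate choice of $N$. Once $\rho$ is in hand, Lemma \ref{Lem:DivideBoth} finishes the inclusion, and the valuation-domain assertion follows from the evaluation-map description above; combining the two inclusions with the easy direction gives $R_{\M_{\m,a}} = S$.
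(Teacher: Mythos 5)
Your overall strategy is the paper's: exhibit $\varphi$ as a quotient of two elements of $\IntR(E,V)$ whose denominator is a unit at $a$, using the non-divisibility of $\Gamma$ to build a denominator of the shape $(x-a)^{mn}+t$ with $v(t)=mn\eta$ for some $\eta\in\Q\Gamma\setminus\Gamma$, so that $v\bigl((e-a)^{mn}+t\bigr)=\min\{mn\,v(e-a),\,mn\eta\}$ with no cancellation possible. The easy inclusion and the observation that $\{\varphi\mid\varphi(a)\in V\}$ is a valuation domain are both fine.

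The gap is in the reverse inclusion, exactly where you flagged the "main obstacle," and your proposed form of $\rho$ cannot overcome it. First, a normalization slip: with $q(a)=0$ you get $\rho(a)=\pi^{-N}\notin V$, so you need $\rho=\pi^{N}\bigl(q(x)+\pi^{N}\bigr)^{-1}$; relatedly, the denominator does not have valuation $0$ at every point of $E$, only valuation exactly $\min\{v(q(e)),Nv(\pi)\}$. The serious problem is that no multiplier of the form $\pi^{N}\bigl(q(x)+\pi^{N}\bigr)^{-1}$, nor a finite product of such, can make $\varphi\rho$ integer-valued in general: if $\varphi$ has a pole at some $p\neq a$ lying in $E$ (or with points of $E$ accumulating at it) --- which is perfectly compatible with $\varphi(a)\in V$, e.g.\ $\varphi=1/(x-e_0)$ with $e_0\in E$ and $v(e_0-a)\leq 0$ --- then $v(\varphi(e))$ is unboundedly negative near $p$ while $v(\rho(e))$ stays bounded there, so $\varphi\rho\notin\IntR(E,V)$. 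The multiplier must absorb the denominator polynomial of $\varphi$. That is what the paper does: writing $\varphi=f/g$ with $v(g(a))=0$ and setting $h(x)=\tfrac1t(x-a)^{mn}+1$ with $m=\max\{\deg f,\deg g\}$, it shows $f/h$ and $g/h$ both lie in $\IntR(E,V)$ with $(g/h)(a)\notin\m$, so $\varphi=(f/h)/(g/h)$ lies in the localization; roots of $g$ merely make $g/h$ vanish, which is harmless. The remaining verification, that $v(f(e)),v(g(e))\geq v(h(e))$ when $v(e-a)<\eta$, is the $\minval$ computation you gesture at (the slopes of $\minval_f$ and $\minval_g$ are at most $m<mn$). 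As written, your choice of $\rho$ makes the key verification false, so the argument does not close without replacing $\rho$ by $g/h$.
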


\begin{proof}

	It suffices to assume without loss of generality that $0 \in E$ and show that $\IntR(E, V)_{\mathfrak{M}_{\m, 0}} = \{\varphi \in K(x) \mid \varphi(0) \in V \}$ since $\IntR(E, V) \cong \IntR(E-a, V)$ for all $a \in E$. 
	
	We see that $\IntR(E, V)_{\M_{\m, 0}} \subseteq \{\varphi \in K(x) \mid \varphi(0) \in V \}$, so we want to show the reverse inclusion. Let $\varphi \in K(x)$ be a nonzero rational function such that $\varphi(0) \in V$. We can write $\varphi = \frac{f}{g}$ such that $f, g \in K[x]$ and $v(g(0)) = 0$. 
	
	Since $v(g(0)) = 0$, we know that $v(g(a)) = 0$ for all $a \in K$ such that $v(a)$ is sufficiently large due to the fact that the valuation of each monomial in $g(a)$ except the constant can be arbitrarily large depending on $v(a)$. We also know that by Corollary \ref{Cor:MinvalAlmostEverywhere} that $v(f(a)) = \minval_f(v(a))$ for all $a \in K$ such that $v(a)$ is sufficiently large. Since $v(f(0)) \geq 0$, we can ensure that $v(f(a)) \geq 0$ for all $a \in K$ such that $v(a)$ is sufficiently large. The reasoning for this is similar to that for $g(x)$, except $f(x)$ might not have a constant term. From this, we deduce that there exists $\delta \in \Gamma$ with $\delta \geq 0$ such that $v(f(d)) \geq v(g(d)) = 0$ for all $d \in K$ with $v(d) > \delta$.
	
	Since $\Gamma$ is not divisible, there exists $\eta \in \Q\Gamma \setminus \Gamma$ such that $\eta > \delta$. There also exists an $n \in \N$ with $n > 0$ such that $n\eta \in \Gamma$. Let $m\coloneqq \max\{\deg f, \deg g \}$. Set $h(x) \coloneqq \frac{1}{t}x^{mn} + 1$, where $t \in V$ is such that $v(t) = mn\eta$. We want to show that $\frac{f}{h}, \frac{g}{h} \in \IntR(E,V)$ and $\frac{g}{h} \notin \M_{\m, 0}$.
	
	Now let $d \in E$. Then
	\[
	v(h(d)) = \begin{cases}
		0, & \text{if $v(d) > \eta$},\\
		mn(v(d)-\eta), & \text{if $v(d) < \eta$}. 
	\end{cases}.
	\]
	From this, we can gather that for all $d \in E$ with $v(d) > \eta$, we have $v\left(\frac{f}{h}(d)\right) \geq 0$ and $v\left(\frac{g}{h}(d)\right) \geq 0$. Furthermore, $\frac{g}{h}(0) = g(0) \notin \m$. 
	
	Now fix $d \in E$ such that $v(d) < \eta$. We know  $v(f(d)) \geq \minval_f(v(d)) = cv(d) + \beta$ for some $c \in \Z$ and $\beta \in \Gamma$. Notice that $c\eta + \beta \geq \minval_f(\eta)$ by the definition of $\minval_f$ and thus $c\eta +\beta \geq \minval_f(\eta) \geq 0$. This implies $\beta \geq -c\eta$. Now we obtain
	\[
	v(f(d)) \geq cv(d) + \beta \geq c(v(d) - \eta) > mn(v(d) - \eta) = v(h(d))
	\]
	because $c \leq m < mn$ and $v(d) - \eta < 0$. This shows that $\frac{f}{h}(d) \in \m$. We can show that $\frac{g}{h}(d) \in \m$ the same way. Thus, $\frac{f}{h}, \frac{g}{h} \in \IntR(E,V)$ and $\frac{g}{h} \notin \M_{\m, 0}$, which implies that $\frac{f}{g} = \varphi \in \IntR(E, V)_{\mathfrak{M}_{\m, a}}$. 
	
	This shows that $\IntR(E, V)_{\mathfrak{M}_{\m, a}} = \{\varphi \in K(x) \mid \varphi(a) \in V \}$, so $ \IntR(E, V)_{\mathfrak{M}_{\m, a}}$ is a valuation domain.

\end{proof}

Suppose we have a valuation domain $V$ such that $\m$ is not a principal ideal of $V$ and $V/\m$ is algebraically closed. We know that $\IntR(V)$ is not Prüfer, so there must be some prime ideal $\mathfrak{P}$ of $\IntR(V)$ such that $\IntR(V)_\mathfrak{P}$ is not a valuation domain. We have just seen that this ideal cannot be a pointed maximal ideal if $\Gamma$ is not divisible. We will give an example of such a prime in the form of an ultrafilter limit of pointed maximal ideals. We will also use the idea of a pseudo-divergent sequence \cite[Definition 2.1]{Peruginelli}. Note that we do not need to assume that the value group of $V$ is not divisible. 

\begin{proposition}
	Suppose that $\m$ is not a principal ideal of $V$ and $V/\m$ is algebraically closed. Fix some $d \in V$ with $v(d) > 0$. Let $\{d_i\}_{i=1}^\infty \subseteq V$ such that $v(d_i) > v(d)$ for each $i$ and for each $\varepsilon \in \Gamma$ with $\varepsilon > 0$, there exists an $i$ such that $v(d_i) < v(d) + \varepsilon$. Then let $\mathcal{U}$ be a non-principal ultrafilter of $\{\M_{\m, d_i}\}_{i=1}^\infty$. Then $\lim\limits_{\mathcal{U}} \M_{\m, d_i}$ is a prime ideal of $\IntR(V)$ such that $\IntR(V)_{\lim\limits_{\mathcal{U}} \M_{\m, d_i}}$ is not valuation domain.
\end{proposition}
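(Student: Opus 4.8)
Here is the approach I would take. Since an ultrafilter limit of prime ideals is again prime, $\mathfrak P := \lim_{\mathcal U}\M_{\m,d_i}$ is automatically a prime ideal of $\IntR(V)$, so the real content is that $R := \IntR(V)_{\mathfrak P}$ is not a valuation domain. For this it suffices to exhibit one finitely generated ideal of $R$ that is not principal, and I would use $(x,d)R$. Because $R$ is local, $(x,d)R$ is principal if and only if one of $\tfrac dx$, $\tfrac xd$ lies in $R$, so the plan reduces to showing $\tfrac dx\notin R$ and $\tfrac xd\notin R$. The first is easy and uses only $v(d_i)>v(d)$: by Lemma~\ref{Lem:DivideBoth}, $\tfrac dx\in R$ would give $c\in K(x)$ with $\tfrac xc,\tfrac dc\in\IntR(V)$ and $\tfrac xc\notin\mathfrak P$; membership $\tfrac xc\in\IntR(V)$ forces $v(c(r))\le v(r)$ for all $r\in V$, $\tfrac dc\in\IntR(V)$ forces $v(c(r))\le v(d)$ for all $r\in V$, and $\tfrac xc\notin\mathfrak P$ forces $v(c(d_i))=v(d_i)$ for a $\mathcal U$-large set of $i$; for any such $i$ this gives $v(d_i)=v(c(d_i))\le v(d)$, contradicting $v(d_i)>v(d)$.

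The second, $\tfrac xd\notin R$, is the heart of the matter and is where algebraic closedness of $V/\m$ and non-principality of $\m$ get used. By Lemma~\ref{Lem:DivideBoth}, $\tfrac xd\in R$ would give $c$ with $\tfrac xc\in\IntR(V)$ and $g:=\tfrac dc\in\IntR(V)\setminus\mathfrak P$; then $\tfrac xc=g\cdot\tfrac xd\in\IntR(V)$ says precisely that $v(g(r))\ge v(d)-v(r)$ for all $r\in V$. I would show no such $g$ can exist. Combining $v(g(r))\ge 0$ (from $g\in\IntR(V)$) with $v(g(r))\ge v(d)-v(r)$, and using Lemma~\ref{Lem:MinvalEqualityAE} together with the piecewise-linearity of $\minval_g$ from Proposition~\ref{Prop:FormOfMinvalRat}, one gets $\minval_g(\gamma)\ge\max\{0,\,v(d)-\gamma\}$ for all $\gamma\ge 0$, in particular $\minval_g(0)\ge v(d)>0$. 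On the other hand $g\notin\mathfrak P$ means $v(g(d_i))=0$ on a $\mathcal U$-large set of $i$, and — since by the hypothesis on the $d_i$ their valuations accumulate at $v(d)$ from above along $\mathcal U$, and again by Lemma~\ref{Lem:MinvalEqualityAE} — this forces $\minval_g$ to vanish identically on an interval $(v(d),v(d)+\epsilon)$, hence $\minval_g(v(d))=0$ by continuity.

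To finish: $\minval_g$ drops from a value $\ge v(d)$ at $0$ down to $0$ at $v(d)$ while remaining $\ge v(d)-\gamma$ on $[0,v(d)]$, which forces its slope immediately to the left of $v(d)$ to be $\le -1$, whereas its slope immediately to the right of $v(d)$ is $0$; so the slopes of $\minval_g$ strictly increase across $\alpha:=v(d)$. Lemma~\ref{Lem:ValuationLemma}, whose hypotheses ($V/\m$ algebraically closed, $\m$ not principal) hold here, then produces $a\in K$ with $v(a)=v(d)$ and $v(g(a))<\minval_g(v(d))=0$. But $v(a)=v(d)>0$ gives $a\in V$, so $g(a)\in V$ and $v(g(a))\ge 0$ — a contradiction. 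Hence $\tfrac xd\notin R$, and $(x,d)R$ is not principal.

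I expect the main obstacle to lie in that last pair of paragraphs, specifically in making precise the interaction between $\mathcal U$ and the valuations $v(d_i)$: one should really run the argument with $v(d)$ replaced by the $\mathcal U$-accumulation point of the $v(d_i)$ (which is $\ge v(d)$), and separately dispose of the degenerate possibilities that $\mathcal U$ concentrates on a single value $\gamma_0>v(d)$ or drives $v(d_i)\to\infty$ (in the latter case $\mathfrak P$ coincides with the pointed maximal ideal $\M_{\m,0}$). In each situation, though, the same mechanism should apply: a downward kink in the relevant minimum valuation function produces, via Lemma~\ref{Lem:ValuationLemma}, an element of $V$ at which the rational function escapes $V$, contradicting that it lies in $\IntR(V)$.
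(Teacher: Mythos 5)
Your proposal is correct and follows essentially the same route as the paper: the paper tests the pair $\frac{x+d}{d}$, $\frac{d}{x+d}$ rather than your $\frac{x}{d}$, $\frac{d}{x}$, extracts the same slope constraints on the relevant minimum valuation function (slope $\leq -1$ immediately left of $v(d)$, slope $0$ immediately right), and closes with Lemma \ref{Lem:ValuationLemma} exactly as you do. The subtlety you flag at the end — ensuring that the $\mathcal{U}$-large set on which the witness has valuation $0$ actually meets indices with $v(d_i)$ arbitrarily close to $v(d)$ — is passed over without comment in the paper's own proof, so your caution there is warranted rather than a defect.
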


\begin{proof}
	First note that the existence of $\{d_i\}_{i=1}^\infty \subseteq V$ such that $v(d_i) > v(d)$ for each $i$ and for each $\varepsilon \in \Gamma$ with $\varepsilon > 0$, there exists an $i$ such that $v(d_i) < v(d) + \varepsilon$ depends on the maximal ideal not being principal.

	We will show that $\frac{x+d}{d} \notin \IntR(V)_{\lim\limits_{\mathcal{U}} \M_{\m, d_i}}$. A similar argument will show that $\frac{d}{x+d} \notin \IntR(V)_{\lim\limits_{\mathcal{U}} \M_{\m, d_i}}$. From this, we will see that $\IntR(V)_{\lim\limits_{\mathcal{U}} \M_{\m, d_i}}$ is not valuation domain.
	
	Suppose on the contrary that $\frac{x+d}{d} \in \IntR(V)_{\lim\limits_{\mathcal{U}} \M_{\m, d_i}}$. Then we may write $\frac{x+d}{d} = \frac{\varphi}{\psi}$, where $\varphi, \psi \in \IntR(V)$ and $\psi \notin \lim\limits_{\mathcal{U}} \M_{\m, d_i}$. Set $\rho = \frac{d}{\psi}$. Then $\frac{x+d}{\rho} = \varphi$ and $\frac{d}{\rho} = \psi$.

	We then get that $\{\M_{\m, d_i} \mid i\in \N, \psi \notin \M_{\m, d_i} \} \in \mathcal{U}$. Because $\mathcal{U}$ is not principal, $\{\M_{\m, d_i} \mid i\in \N, \psi \notin \M_{\m, d_i} \}$ is an infinite set. Therefore, there exists $i \in \N$ such that $v(d_i)$ is arbitrarily close to $v(d)$ and $\psi \notin \M_{\m, d_i}$, or equivalently, $v(\psi(d_i)) = 0$. This shows that there exists some $\epsilon \in \Gamma$ with $\epsilon > 0$ such that $\minval_\psi(\gamma) = 0$ for $\gamma \in \Gamma$ such that $v(d) \leq \gamma \leq v(d) + \epsilon$. Since $\minval_\rho = \minval_d - \minval_\psi$, we obtain that $\minval_\rho(\gamma) = v(d)$ for $\gamma$ such that $v(d) \leq \gamma \leq v(d) + \epsilon$. 
	
	On the other hand, we can make $\epsilon$ small enough so that $\minval_\rho(\gamma) = c\gamma + \beta$ for $\gamma$ such that $v(d) - \epsilon \leq \gamma \leq v(d)$ for some $c \in \Z$ and $\beta \in \Gamma$. We know that $\minval_\rho(v(d)) = v(d)$, so $\beta = (1-c)v(d)$. Next, since $\varphi \in \IntR(V)$, for $\gamma \in \Gamma$ with $\gamma \geq 0$, we have that $\minval_{x+d}(\gamma) - \minval_\rho(\gamma) = \minval_\varphi(\gamma) \geq 0$. Lemma \ref{Lem:MinvalEqualityAE} implies this inequality for almost all such values of $\gamma$, and by the form of the minimum valuation function given in Proposition \ref{Prop:FormOfMinvalRat}, the inequality holds for all $\gamma \geq 0$. Thus, for $\gamma$ such that $v(d) - \epsilon < \gamma \leq v(d)$, we get \[\gamma = \minval_{x+d}(\gamma) \geq \minval_\rho(\gamma) = c\gamma + (1-c)v(d).\] This implies that $(1-c)\gamma \geq (1-c)v(d)$, but we have $\gamma \leq v(d)$, so it must be the case that $1-c \leq 0$, or equivalently, $c \geq 1$.

	By Lemma \ref{Lem:ValuationLemma}, there exists $a \in V$ with $v(a) = v(d)$ such that $v(\rho(a)) > v(d)$, but then $v(\psi(a)) = v\left(\frac{d}{\rho(a)}\right) < 0$, contradicting the fact that $\psi \in \IntR(V)$. Thus, $\frac{x+d}{d} \notin \IntR(V)_{\lim\limits_{\mathcal{U}} \M_{\m, d_i}}$.

\end{proof}

\begin{remark}
	Suppose that $\m$ is not a principal ideal of $V$, that $V/\m$ is algebraically closed, and that $\Gamma$ is not divisible. Then $\IntR(V)$ is an example of an essential domain, a domain that can be written as the intersection of some family of essential valuation overrings, that is not a P$v$MD. Another example of an essential domain that is not a P$v$MD can be found in \cite{HeinzerOhm}. 
	
	We can write
	\[
	\IntR(V) = \bigcap_{a \in V} \IntR(V)_{\M_{\m, a}}.
	\]
 For every $a \in V$, we know that $\M_{\m, a}$ is essential by Proposition \ref{Prop:PointedMaximalIdealsAreEssential}. This means that $\IntR(V)$ is an essential domain. Furthermore, for every $a \in V$, the ideal $\M_{\m, a}$ being essential implies that $\M_{\m, a}$ is a $t$-ideal \cite[Lemma 3.17]{Kang}. Using the notation of the previous proposition, we know that $\lim\limits_{\mathcal{U}} \M_{\m, d_i}$ is a t-ideal since the ultrafilter limit of t-ideals is a t-ideal \cite[Proposition 2.5]{PvMD}. However, the ideal $\lim\limits_{\mathcal{U}} \M_{\m, d_i}$ is a $t$-maximal ideal of $\IntR(V)$ that is not essential. Thus, $\IntR(V)$ is not a P$v$MD.
\end{remark}

We will now consider the case where $V/\m$ is algebraically closed and the value group is divisible (which implies that $\m$ is not principal). We can actually detect that $\IntR(V)$ is not Prüfer by localizing at a pointed maximal ideal.

\begin{proposition}\label{Prop:AlgClosedResFieldDivisibleValueGroup}
	Suppose that $V/\m$ is algebraically closed and $\Gamma$ is divisible. Then the localization of $\IntR(V)$ at any pointed maximal ideal is not a valuation ring. 
\end{proposition}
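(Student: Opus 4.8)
The plan is to show that for every pointed maximal ideal $\M_{\m,a}$ the ring $\IntR(V)_{\M_{\m,a}}$ fails the test of being a valuation ring, by producing $\theta\in K(x)$ with both $\theta\notin\IntR(V)_{\M_{\m,a}}$ and $\theta^{-1}\notin\IntR(V)_{\M_{\m,a}}$. Since the ring automorphism $\varphi(x)\mapsto\varphi(x-a)$ of $\IntR(V)$ sends $\M_{\m,a}$ to $\M_{\m,0}$, it suffices to treat $a=0$; write $R=\IntR(V)_{\M_{\m,0}}$ and fix a nonzero $d\in\m$ (available since $\Gamma$ divisible forces $\m$ to be non-principal, hence nonzero). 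I will take $\theta=x/d$. The easy half is $\theta^{-1}=d/x\notin R$: the elements of $\IntR(V)$ lying outside $\M_{\m,0}$ are units of the valuation domain $W=\{\psi\in K(x)\mid\psi(0)\in V\}$, so $R\subseteq W$, while $d/x$ has a pole at $0$ and therefore is not in $W$.

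The substance is showing $x/d\notin R$. Suppose to the contrary $x/d=\alpha/\beta$ with $\alpha,\beta\in\IntR(V)$ and $v(\beta(0))=0$, so $\alpha=(x/d)\beta\in\IntR(V)$. By additivity of $\minval$ on products, $\minval_\alpha(\gamma)=\minval_\beta(\gamma)+\gamma-v(d)$. Writing $M:=\minval_\beta$, I would extract — via Lemma~\ref{Lem:MinvalEqualityAE} (the value of $\minval$ is actually achieved for all but finitely many $\gamma$) and the piecewise-linear description in Proposition~\ref{Prop:FormOfMinvalRat} — the following: (i) $M(\gamma)\ge 0$ for all $\gamma\ge 0$, since $\beta\in\IntR(V)$; (ii) $M(\gamma)\ge v(d)-\gamma$ for $0\le\gamma\le v(d)$, since $\alpha\in\IntR(V)$ and $\minval_\alpha\ge 0$ on $\gamma\ge 0$; (iii) $M(\gamma)=0$ for all sufficiently large $\gamma$, a direct computation from $v(\beta(0))=0$ (for large $\gamma$ the constant terms of the numerator and denominator of $\beta$ govern their respective minvals). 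In particular $M$ is continuous, piecewise linear, satisfies $M(0)\ge v(d)>0$, and is identically $0$ far out.

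Now I would locate where $M$ settles to $0$. Put $\delta:=\sup\{\gamma\ge 0\mid M(\gamma)>0\}$; since $M$ is piecewise linear this $\delta$ is one of the finitely many breakpoints of $M$, with $M(\delta)=0$, $M\equiv 0$ on $[\delta,\infty)$, $\delta>0$, and on a short interval $[\delta-\epsilon,\delta]$ one has $M(\gamma)=c(\gamma-\delta)$ with $c<0$ (the left slope must be strictly negative: $M$ is positive just to the left of $\delta$ and vanishes at $\delta$). The sole place the hypothesis is used beyond Lemma~\ref{Lem:ValuationLemma} is that divisibility of $\Gamma$ gives $\Q\Gamma=\Gamma$, so $\delta\in\Gamma$ and a workable $\epsilon\in\Gamma$ exists — exactly the feature that is missing in the non-divisible situation of Proposition~\ref{Prop:PointedMaximalIdealsAreEssential}. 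Applying Lemma~\ref{Lem:ValuationLemma} to $\beta$ at $\delta$, which is legitimate because $V/\m$ is algebraically closed, $\m$ is not principal, and the two local slopes are $c_1=c<0=c_2$, yields $a\in K$ with $v(a)=\delta$ and $v(\beta(a))<\minval_\beta(\delta)=0$. But $v(a)=\delta>0$ means $a\in\m\subseteq V$, so $\beta\in\IntR(V)$ forces $v(\beta(a))\ge 0$, a contradiction. Hence $x/d\notin R$, and with $d/x\notin R$ this shows $R$ is not a valuation ring.

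The step I expect to be the main obstacle is the local analysis of $M=\minval_\beta$ around $\delta$: one must be sure that $M(\delta)=0$ (so that Lemma~\ref{Lem:ValuationLemma} produces a genuinely negative value $v(\beta(a))<0$, not merely a smaller nonnegative one) and that $\Gamma$ being divisible, hence densely ordered, permits $\delta$ and $\epsilon$ to be chosen in $\Gamma$. The rest is routine bookkeeping with minimum valuation functions.
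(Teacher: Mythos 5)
Your proof is correct, and it reaches the contradiction through the same core machinery as the paper's proof — the piecewise-linear analysis of a minimum valuation function, the fact that divisibility of $\Gamma$ places the relevant breakpoint (and a small window around it) inside $\Gamma$ rather than merely in $\Q\Gamma$, and Lemma~\ref{Lem:ValuationLemma} — but the setup is genuinely different. The paper tests the pair $\frac{x+d}{d}$, $\frac{d}{x+d}$ and must treat both possible inclusions symmetrically: it invokes Lemma~\ref{Lem:DivideBoth} to extract a common divisor $\varphi$ of $d$ and $x+d$ in $\IntR(V)$ with at least one quotient a unit at $0$, then runs the slope analysis on $\minval_\varphi$ (terminal slope $0$, terminal value $v(d)$, last breakpoint $\delta_{n-1}\ge v(d)$, penultimate slope positive) before applying the $c_1>c_2$ case of the lemma. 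You instead test $x/d$ and $d/x$, dispose of $d/x$ instantly via the containment $\IntR(V)_{\M_{\m,0}}\subseteq\{\psi\in K(x)\mid\psi(0)\in V\}$ (a pole at $0$), and then only need to analyze the denominator $\beta$ of a putative representation of $x/d$, applying the $c_1<c_2$ case of the lemma to $\beta$ itself to get $a\in\m$ with $v(\beta(a))<0$. Your route is shorter because the asymmetry of the pole at $0$ replaces the paper's two-case analysis via Lemma~\ref{Lem:DivideBoth}; the paper's more uniform argument has the advantage that it is reused almost verbatim in the ultrafilter-limit proposition. If you write this up, make explicit that the $\epsilon$ required by Lemma~\ref{Lem:ValuationLemma} can be chosen in $\Gamma$ (again by divisibility, since the finitely many breakpoints of $\minval_\beta$ lie in $\Q\Gamma=\Gamma$), and that $a$ cannot be a pole of $\beta$ since $\beta\in\IntR(V)$ and $v(a)=\delta>0$.
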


\begin{proof}
	Let $a \in V$. Mapping $x \mapsto x-a$ and fixing $V$ determines an automorphism for $\IntR(V)$ for any $a \in V$, we can study the behavior of localizing $\IntR(V)$ at $\M_{\m, a}$ by only considering the localization at $\M_{\m, 0}$. 
	
	Suppose for a contradiction that $W \coloneqq \IntR(V)_{\M_{\m, 0}}$ is a valuation domain. Fix a nonzero $d \in \m$. We have $\frac{d}{x+d} \in W$ or $\frac{x+d}{d} \in W$. Thus, by Lemma \ref{Lem:DivideBoth}, we have some $\varphi \in K(x)$ such that $\frac{d}{\varphi} \in \IntR(V)$ and $\frac{x+d}{\varphi} \in \IntR(V)$, and additionally, $\frac{d}{\varphi} \notin \M_{\m, 0}$ or $\frac{x+d}{\varphi} \notin \M_{\m, 0}$. 
	
	We know that $\minval_\varphi$ has the form
	\[
	\minval_\varphi(\gamma) =
	\begin{cases}
		c_1\gamma + \beta_1, & \gamma \leq \delta_1,\\
		c_2\gamma + \beta_2, & \delta_1 \leq \gamma \leq \delta_2,\\
		\vdots\\
		c_{n-1}\gamma + \beta_{n-1}, & \delta_{n-2} \leq \gamma \leq \delta_{n-1},\\
		c_n\gamma + \beta_n, & \delta_{n-1} \leq \gamma, 
	\end{cases}
	\]
	for some $c_i \in \Z$, $\beta_i \in \Gamma$, $\delta_i \in \Q\Gamma$ by Proposition \ref{Prop:FormOfMinvalRat}. We may choose $\delta_{n-1}$ such that $c_{n-1} \neq c_n$. We claim that we must have $c_n = 0$ and $\beta_n = v(d)$. If $\frac{x+d}{\varphi} \notin \M_{\m, 0}$, then $v\left(\frac{0+d}{\varphi(0)} \right) = 0$, so $v(\varphi(0)) = v(d)$. If $\frac{d}{\varphi} \notin \M_{\m, 0}$, we similarly have $v(\varphi(0)) = v(d)$. Either way, we have $v(\varphi(0)) = v(d)$. Since $d \neq 0$, we have that $\varphi(0) \neq 0$. Then $v(\varphi(a)) = v(\varphi(0))$ for $a \in K$ such that $v(a)$ is sufficiently large. This implies that $\minval_\varphi(\gamma)$ is constant and equal to $v(\varphi(0)) = v(d)$ for $\gamma \in \Gamma$ sufficiently large by Lemma \ref{Lem:MinvalEqualityAE}. Therefore, $c_n = 0$ and $\beta_n = v(d)$.

	We also claim that $\delta_{n-1} \geq v(d)$. If not, by Lemma \ref{Lem:MinvalEqualityAE}, there exists $b \in V$ such that $\max\{\delta_{n-1},0 \} < v(b) < v(d)$ and $v(\varphi(b)) = \minval_{\varphi}(v(b)) = v(d)$. However, this would imply that
	\[
	v\left(\frac{b+d}{\varphi(b)}\right) = v(b) - v(d) < 0,
	\]
	contradicting the fact that $\frac{x+d}{\varphi} \in \IntR(V)$. 
	
	Next, we claim that $c_{n-1} > c_n = 0$. If $c_{n-1} < 0$, we can find $b \in V$ such that $\max\{\delta_{n-2},0 \} < v(b) < \delta_{n-1}$ and $v(\varphi(b)) = \minval_\varphi(v(b)) = c_{n-1}v(b) + \beta_{n-1}$. Since $c_{n-1}\delta_{n-1} + \beta_{n-1} = v(d)$, we have that $v(\varphi(b)) - v(d) = c_{n-1}(v(b) - \delta_{n-1}) > 0$ as both $c_{n-1}$ and $v(b)-\delta_{n-1}$ are less than 0. This contradicts the fact that $\frac{d}{\varphi} \in \IntR(V)$.
	
	Because $\Gamma$ is divisible, we know that $\delta_{n-1} \in \Gamma$. Furthermore, since $c_{n-1} > c_n$, by Lemma \ref{Lem:ValuationLemma}, there exists some element $a \in V$ so that we have $v(a) = \delta_{n-1}$ and $v(\varphi(a)) > \minval_\varphi(\delta_{n-1}) = v(d)$. This contradicts $\frac{d}{\varphi} \in \IntR(V)$. Thus, the assumption that $W$ is a valuation domain is false. 
\end{proof}

	We end this section by noting that for a valuation domain $V$ such that $V/\m$ is algebraically closed and $\m$ is not a principal ideal of $V$, even though $\IntR(V)$ is not a Prüfer domain, there are subsets $E$ of the field of fractions $K$ of $V$ such that $\IntR(E,V)$ is a Prüfer domain. For example, if $E$ is a singleton, then $\IntR(E,V)$ is a valuation domain and therefore a Prüfer domain. Likewise, there are other subsets $E$ such that $\IntR(E,V)$ is not a Prüfer domain. We can obtain from the proofs of Theorem \ref{Thm:IntROverNonMonicNonSingularValDomIsNotPrufer} and Lemma \ref{Lem:ValuationLemma} some conditions $E$ such that $\IntR(E,V)$ is not Prüfer. One case is indicated in the following result. 
	
	\begin{proposition}\label{Prop:IntRDVNotPruferCase}
		Suppose $V/\m$ is algebraically closed and $\m$ is not a principal ideal of $V$. Also suppose that $V$ is an essential valuation overring of a domain $D$ centered on a maximal ideal of $D$. Then $\IntR(D, V)$ is not Prüfer. 
	\end{proposition}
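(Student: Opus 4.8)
The plan is to adapt the proof of Theorem~\ref{Thm:IntROverNonMonicNonSingularValDomIsNotPrufer} almost verbatim, replacing $\IntR(V)$ by $\IntR(D,V)$, after first recording a handful of structural facts about the inclusion $D \hookrightarrow V$. Since $V$ is an essential valuation overring of $D$ centered on a maximal ideal, we have $V = D_{\m_D}$ for some maximal ideal $\m_D$ of $D$, with $\m_D V = \m$; consequently \textbf{(i)} $V/\m \cong D/\m_D$, so the residue field of $V$ is literally $D/\m_D$, in particular an infinite (indeed algebraically closed) field; \textbf{(ii)} every $\gamma \in \Gamma$ with $\gamma \ge 0$ is $v(d)$ for some nonzero $d \in D$, since any $a \in V$ with $v(a) = \gamma$ can be written $a = d/s$ with $d \in D$ and $s \in D \setminus \m_D$, whence $v(s) = 0$ and $v(d) = \gamma$; and \textbf{(iii)} $v(\m_D \setminus \{0\}) = v(\m \setminus \{0\})$, and because $\m$ is not principal the group $\Gamma$ is densely ordered, so every nonempty open interval of $\Gamma$ contains infinitely many values realized by nonzero elements of $D$, and $\m_D$ contains elements of arbitrarily small positive valuation.

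Next I would observe that these facts let every valuation-theoretic input used in that proof be arranged with its witnessing element lying in $D$. In Proposition~\ref{Prop:AttainMinvalRat} and the corollaries it rests on, one produces a witness of the shape $a = tu$ with $v(t)$ prescribed and $u$ a unit whose residue avoids a finite set of bad classes: by \textbf{(ii)} one may take $t \in D$ (for instance a chosen element of $\m_D$), and by \textbf{(i)} one may take $u \in D$ with $u + \m_D$ equal to any nonzero class of the infinite field $D/\m_D$ outside the bad set, so $a = tu \in D$. Similarly, in the proof of Lemma~\ref{Lem:ValuationLemma} the witness has the shape $a = b(u+h)$, with $v(b)$ prescribed, $u$ a lift of a nonzero root of a local polynomial, and $h \in \m$ of sufficiently small positive valuation: by \textbf{(i)}--\textbf{(iii)} one may take $b, u \in D$ and $h \in \m_D$, so $a \in D$, while the polynomial bookkeeping — carried out in $V[x]$ — and the resulting estimate $v(\varphi(a)) = \minval_\varphi(v(b)) + (e_\ell - e_\ell')v(h)$ are unchanged. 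Thus one obtains $D$-valued versions of Proposition~\ref{Prop:AttainMinvalRat} and of Lemma~\ref{Lem:ValuationLemma}; Lemma~\ref{Lem:MinvalEqualityAE}, being a statement about all $t \in K$, needs no modification.

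With these in hand I would run the argument of Theorem~\ref{Thm:IntROverNonMonicNonSingularValDomIsNotPrufer}. Assume for contradiction that $\IntR(D,V)$ is Prüfer and fix a nonzero $d \in \m_D$, so $v(d) > 0$. Then the finitely generated ideal $(x,d)$ of $\IntR(D,V)$ is invertible, so $x\varphi + d\psi = 1$ for some $\varphi,\psi$ with $x\varphi, d\varphi, x\psi, d\psi \in \IntR(D,V)$. Using the $D$-version of Proposition~\ref{Prop:AttainMinvalRat}, pick $b \in D$ with $v(b) = v(d)$ at which $\varphi$ and $\psi$ simultaneously attain their minimum valuations; evaluating $x\varphi + d\psi = 1$ at $x = b$ and comparing valuations (both summands lie in $V$ and sum to $1$) shows one of $\varphi,\psi$ — call it $\rho$ — satisfies $x\rho, d\rho \in \IntR(D,V)$ and $\minval_\rho(v(d)) = -v(d)$. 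By Proposition~\ref{Prop:FormOfMinvalRat}, $\minval_\rho$ is affine with some slope $c$ on $[v(d)-\varepsilon, v(d)]$ and some slope $c'$ on $[v(d), v(d)+\varepsilon]$ for small $\varepsilon > 0$; by Lemma~\ref{Lem:MinvalEqualityAE} together with \textbf{(iii)} one finds $b', b'' \in D$ with $v(d)-\varepsilon < v(b') < v(d) < v(b'') < v(d)+\varepsilon$ at which $\rho$ attains its minimum valuation, and then $x\rho \in \IntR(D,V)$ forces $c \le -1$ while $d\rho \in \IntR(D,V)$ forces $c' \ge 0$. Hence $c < c'$, so the $D$-version of Lemma~\ref{Lem:ValuationLemma} produces $a \in D$ with $v(a) = v(d)$ and $v(\rho(a)) < \minval_\rho(v(d)) = -v(d)$, giving $v(a\rho(a)) < 0$ and contradicting $x\rho \in \IntR(D,V)$. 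Therefore $(x,d)$ is not invertible, and $\IntR(D,V)$ is not Prüfer.

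The main obstacle is not conceptual but is precisely the verification in the second paragraph: one must check that every point where the original proof extracts from $V$ an element of prescribed valuation or prescribed residue can instead extract it from $D$. The content making this possible is exactly that localizing $D$ at $\m_D$ alters neither the residue field nor the value semigroup of the maximal ideal, so the two properties driving the whole argument — algebraic closedness of $V/\m$ and non-principality of $\m$ — are already present, and visible, at the level of $D$ itself.
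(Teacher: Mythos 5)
Your proposal is correct and follows the same route as the paper, which proves this proposition precisely by rerunning the proofs of Theorem~\ref{Thm:IntROverNonMonicNonSingularValDomIsNotPrufer} and Lemma~\ref{Lem:ValuationLemma} using the two facts that $V/\m \cong D/(D\cap\m)$ and that every nonnegative value in $\Gamma$ is attained on $D$. Your write-up simply makes explicit the verification (witnesses of prescribed valuation and residue can be taken in $D$) that the paper leaves to the reader.
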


\begin{proof}
	Let $v$ be the valuation associated with $V$ and $\Gamma$ the value group. This follows from the proofs of Theorem \ref{Thm:IntROverNonMonicNonSingularValDomIsNotPrufer} and Lemma \ref{Lem:ValuationLemma} and the facts that $V/\m \cong D/(D\cap \m)$ and for every $\gamma \in \Gamma$, there exists $d \in D$ such that $v(d) = \gamma$.  
\end{proof}

\section{Integer-valued rational functions over Prüfer domains}

\indent\indent Let $D$ be a domain. We want to know what conditions on $D$ makes $\IntR(D)$ a Prüfer domain. The case for rings of integer-valued polynomials has been answered \cite{IntPruferClassification, CahenChabertFrisch}. If $\IntR(D)$ is a Prüfer domain, then $D$ is a Prüfer domain since homomorphic images of Prüfer domains are Prüfer domains. In \cite{IntValuedRational}, we see that if $D$ is a monic or singular Prüfer domain, then $\IntR(D)$ is Prüfer. We want to first investigate a few cases when $D$ is a Prüfer domain but $\IntR(D)$ is not a Prüfer domain. 

One possible obstruction to $\IntR(E, D)$ being a Prüfer domain is an valuation overring $V$ of $D$ that yields a domain $\IntR(E, V)$ that is not Prüfer. 

\begin{proposition}
	Let $D$ be a domain with field of fractions $K$ and $E \subseteq K$ a subset. Also let $V$ be a valuation overring of $D$ such that $\IntR(E,D)$ and $\IntR(E,V)$ have the same field of fractions. If $\IntR(E,V)$ is not Prüfer, then $\IntR(E,D)$ is not Prüfer either. 
\end{proposition}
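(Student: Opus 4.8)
The plan is to exhibit $\IntR(E,V)$ as an overring of $\IntR(E,D)$ and then invoke the classical fact that every overring of a Prüfer domain is again a Prüfer domain; the statement is then just the contrapositive. So the argument reduces to a containment check plus a citation, with no real computation.

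First I would note that since $V$ is a valuation overring of $D$, in particular $D \subseteq V$, and therefore any $\varphi \in K(x)$ with $\varphi(E) \subseteq D$ automatically satisfies $\varphi(E) \subseteq V$. This gives the inclusion $\IntR(E,D) \subseteq \IntR(E,V)$. By hypothesis these two domains have the same field of fractions, so $\IntR(E,V)$ lies between $\IntR(E,D)$ and its quotient field; that is, $\IntR(E,V)$ is an overring of $\IntR(E,D)$ in the usual sense. (This is precisely where the equal-quotient-field assumption is used, and it is what makes the word ``overring'' applicable.)

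Then I would apply the theorem that an overring of a Prüfer domain is Prüfer (see e.g.\ Gilmer, \emph{Multiplicative Ideal Theory}). If $\IntR(E,D)$ were a Prüfer domain, its overring $\IntR(E,V)$ would be Prüfer as well, contradicting the hypothesis that $\IntR(E,V)$ is not Prüfer. Hence $\IntR(E,D)$ cannot be Prüfer. There is essentially no obstacle here: the only thing to be careful about is recording the containment $\IntR(E,D)\subseteq\IntR(E,V)$ and the role of the shared fraction field, after which the cited structural result finishes the proof immediately.
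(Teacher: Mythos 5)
Your argument is correct and is essentially identical to the paper's proof: both establish the containment $\IntR(E,D) \subseteq \IntR(E,V)$ from $D \subseteq V$, use the shared quotient field to see $\IntR(E,V)$ as an overring, and conclude via the fact that every overring of a Prüfer domain is Prüfer.
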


\begin{proof}
	Since $D\subseteq V$, we have $\IntR(E,D) \subseteq \IntR(E,V)$, so $\IntR(E,V)$ is an overring of $\IntR(E,D)$ that is not Prüfer. Thus, $\IntR(E,D)$ is not Prüfer as every overring of a Prüfer domain is a Prüfer domain. 
\end{proof}

\begin{corollary}\label{Cor:ValOverringObstruction}
	Let $D$ be a domain with field of fractions $K$. Suppose there exists a valuation overring $V$ of $D$ centered on a maximal ideal of $D$ such that the residue field of $V$ is algebraically closed and the maximal ideal of $V$ is not principal. Then $\IntR(D)$ is not Prüfer. 
\end{corollary}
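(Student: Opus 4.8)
The plan is to route the non-Prüfer conclusion for $\IntR(D)$ through the ring $\IntR(D,V)$ of rational functions defined on $D$ but valued in $V$, using Proposition~\ref{Prop:IntRDVNotPruferCase} together with the proposition immediately preceding this corollary. One cannot work with $\IntR(V)$ directly: although $D\subseteq V$, a rational function $\varphi$ with $\varphi(D)\subseteq D$ need not satisfy $\varphi(V)\subseteq V$, so $\IntR(D)$ need not lie inside $\IntR(V)$; it does, however, lie inside $\IntR(D,V)$, which is what makes the comparison work.

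First I would dispose of the case where $D$ is not Prüfer. Evaluation at any fixed $a\in D$ is a surjective ring homomorphism $\IntR(D)\to D$ (the constants already surject), so $D$ is a homomorphic image of $\IntR(D)$; since homomorphic images of Prüfer domains are Prüfer, $\IntR(D)$ cannot be Prüfer when $D$ is not, and the corollary holds. So assume $D$ is Prüfer. Then every valuation overring of $D$ is a localization of $D$ at a prime, and a valuation overring centered on the maximal ideal $\m$ of $D$ must be $V=D_\m$; in particular $V$ is an essential valuation overring of $D$ centered on a maximal ideal, its residue field is $D/\m$, which by hypothesis is algebraically closed, and its maximal ideal $\m V$ is not principal. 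Proposition~\ref{Prop:IntRDVNotPruferCase} then gives that $\IntR(D,V)$ is not a Prüfer domain.

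Finally I would transfer this back to $\IntR(D)$. Since $D\subseteq V$ we have $\IntR(D)\subseteq\IntR(D,V)$, and both rings contain $D[x]$ and lie in $K(x)$, hence share the quotient field $K(x)$; so $\IntR(D,V)$ is an overring of $\IntR(D)$, and applying the preceding proposition with $E=D$ (or simply using that every overring of a Prüfer domain is Prüfer) shows that $\IntR(D)$ is not Prüfer. The only delicate point — the main obstacle — is reconciling the corollary's hypothesis of a \emph{valuation overring centered on a maximal ideal} with the \emph{essential} valuation overring required by Proposition~\ref{Prop:IntRDVNotPruferCase}; this is precisely what the reduction to Prüfer $D$ resolves, since over a Prüfer domain the valuation overrings are exactly the localizations at primes. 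Everything else is routine bookkeeping with containments of these function rings and the two cited results.
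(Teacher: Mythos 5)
Your proof is correct and follows essentially the same route as the paper: view $\IntR(D,V)$ as an overring of $\IntR(D)$ (they share the quotient field $K(x)$) and invoke Proposition~\ref{Prop:IntRDVNotPruferCase} together with the fact that overrings of Prüfer domains are Prüfer. Your additional reduction to the case where $D$ is Prüfer — so that the valuation overring centered on $\m$ is forced to be the localization $D_\m$ and hence \emph{essential}, as Proposition~\ref{Prop:IntRDVNotPruferCase} requires — is a careful step that the paper's one-line proof leaves implicit.
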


\begin{proof}
	Since $D \subseteq \IntR(D)$ and $x \in \IntR(D)$, we have that the field of fractions of $\IntR(D)$ is $K(x)$. Thus, $\IntR(D,V)$ is an overring of $\IntR(D)$. Moreover, $\IntR(D, V)$ is not Prüfer by Proposition \ref{Prop:IntRDVNotPruferCase}, so $\IntR(D)$ is not Prüfer either. 
\end{proof}

Given a Prüfer domain $D$, obstructions to $\IntR(D)$ being a Prüfer domain do not necessarily come locally from a single valuation overring. The obstruction can come from a collection of valuation overrings that behave collectively like a valuation overring with algebraically closed residue field and maximal ideal that is not principal, but individually, each valuation overring does not have both algebraically closed residue field and maximal ideal that is not principal. The construction of such domains is done via sequential domains, a generalization of sequence domains in \cite{SeqDom}.

\begin{definition}
	Let $D$ be a domain with field of fractions $K$. We say that $D$ is a \textbf{sequential domain} if there exist for each $i \in \N \setminus \{0\}$, valuations $v_i: K \to \Gamma_i \cup \{\infty\}$, where $\Gamma_i$ is a totally ordered abelian group, such that
	\begin{itemize}
		\item each associated valuation domain $V_i$ is an essential overring of $D$ such that $D = \bigcap_{i=1}^\infty V_i$,
		\item there is a common totally ordered abelian group $\Gamma$ with embeddings $\Gamma_i \hookrightarrow \Gamma$ such that for each $d \in D$, $\{v_i(d)\}_{i=1}^\infty$ is eventually constant viewed as a sequence in $\Gamma$, and
		\item there exists $\varpi \in D$ such that $v_i(\varpi)$ is not eventually 0. 
	\end{itemize}
	
	Since $v_i(d)$ is eventually constant for all $d \in D$, there is valuation $v_\infty$ defined by $v_\infty(d) = \lim\limits_{i\to\infty} v_i(d)$ with associated valuation domain $V_\infty$ and maximal ideal $\m_\infty$. We also have an embedding of the value group $\Gamma_\infty \hookrightarrow \Gamma$. 
	
	We say a sequential domain $D$ has the \textbf{unbounded ramification property} if for all $\gamma  \in \Q\Gamma_\infty$ with $\gamma > 0$ and $N \in \N$, there is some  $\gamma' \in \Gamma_n$ with $\gamma' > 0$ for some $n \geq N$ such that $\gamma' < \gamma$ considered as elements of $\Q\Gamma$.
\end{definition}

From Theorem \ref{Thm:IntROverNonMonicNonSingularValDomIsNotPrufer}, we know that if $V$ is a valuation domain with algebraically closed residue field and maximal ideal that is not principal, then $\IntR(V)$ is not a Prüfer domain. A sequential domain can spread out the obstructions that make the ring of integer-valued rational functions not a Prüfer domain. The unbounded ramification property mimics the property of having a maximal ideal that is not principal, even if all of the valuation overrings have a principal maximal ideal. There is an example following the proposition illustrating this phenomenon.

The following proposition uses the notation from the definition of a sequential domain. 
\begin{proposition}\label{Prop:SeqAlgClosedUnboundedRamificationNotPrufer}
	Let $D$ be a sequential domain with the unbounded ramification property. Also suppose that $V_\infty/\m_\infty$ is algebraically closed. Then $\IntR(D)$ is not a Prüfer domain. 
\end{proposition}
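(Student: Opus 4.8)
The plan is to mimic the proof of Theorem \ref{Thm:IntROverNonMonicNonSingularValDomIsNotPrufer}, replacing the single valuation domain $V$ with algebraically closed residue field and non-principal maximal ideal by the family $\{V_i\}$ together with the limiting valuation $v_\infty$, and replacing "the maximal ideal of $V$ is not principal" by the unbounded ramification property. Concretely, I would aim for a contradiction by assuming $\IntR(D)$ is Prüfer, fixing $d = \varpi$ (or some element with $v_\infty(d) > 0$), and showing that the finitely generated ideal $(x,d)$ of $\IntR(D)$ cannot be invertible. As in the earlier proof, invertibility gives $\varphi,\psi \in (x,d)^{-1}$ with $x\varphi + d\psi = 1$, and evaluating at a cleverly chosen point forces one of them, call it $\rho$, to satisfy $\minval_{\rho,v_\infty}(v_\infty(d)) = -v_\infty(d)$ with $x\rho, d\rho \in \IntR(D)$.

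The first real step is to set up the right analogues of the minimum valuation function machinery relative to $v_\infty$. Since $D = \bigcap V_i$ and $\IntR(D) = \bigcap \IntR(D, V_i)$, membership of a rational function in $\IntR(D)$ is controlled by all the $v_i$ simultaneously; but because $v_i(c)$ is eventually constant (equal to $v_\infty(c)$) for each fixed coefficient $c \in K$, the functions $\minval_{f,v_i}$ converge pointwise to $\minval_{f,v_\infty}$ on $\Q\Gamma_\infty$, and for $i$ large the "breakpoints" $\delta_j$ of $\minval_{f,v_i}$ agree with those of $\minval_{f,v_\infty}$. This lets me transfer the slope analysis: from $x\rho, d\rho \in \IntR(D, V_i)$ for all $i$, I extract, exactly as in Theorem \ref{Thm:IntROverNonMonicNonSingularValDomIsNotPrufer}, that the slope $c$ of $\minval_{\rho, v_\infty}$ just below $v_\infty(d)$ satisfies $c \le -1$ and the slope $c'$ just above satisfies $c' \ge 0$, hence $c < c'$. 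Here is where the unbounded ramification property enters: to produce the evaluation points $b'$ with $v_\infty(d) - \epsilon < v_\infty(b') < v_\infty(d)$ and $b''$ with $v_\infty(d) < v_\infty(b'') < v_\infty(d) + \epsilon$, I need elements of $D$ whose $v_i$-value (for suitable large $i$) lands strictly between $v_\infty(d)$ and $v_\infty(d) \pm \epsilon$ — precisely the kind of "extra ramification" the property guarantees, and this replaces the appeal to non-principality of $\m$ in Lemma \ref{Lem:MinvalEqualityAE}.

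The final step is to derive the contradiction the same way Theorem \ref{Thm:IntROverNonMonicNonSingularValDomIsNotPrufer} does, via a version of Lemma \ref{Lem:ValuationLemma}: since $V_\infty/\m_\infty$ is algebraically closed and $c < c'$, I want a point $a \in D$ with $v_\infty(a) = v_\infty(d)$ and $v_\infty(\rho(a)) < \minval_{\rho,v_\infty}(v_\infty(d)) = -v_\infty(d)$, which forces $v_\infty(a\rho(a)) < 0$ and contradicts $x\rho \in \IntR(D)$. The main obstacle is that Lemma \ref{Lem:ValuationLemma} is stated for a single valuation domain with non-principal maximal ideal, and its proof constructs the point $a = b(u+h)$ using an element $h$ of arbitrarily small positive value in that one valuation domain; I must re-run that construction inside $D$, using the unbounded ramification property to supply $h$ (an element of $D$ with $0 < v_i(h)$ smaller than a prescribed bound for an appropriate large $i$) and using $V_\infty/\m_\infty \cong D/(D \cap \m_\infty)$ to lift the roots $\xi_\ell$ of the relevant local polynomials back to $D$. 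Checking that the local-polynomial identities (Propositions \ref{Prop:LocalPolys}, \ref{Prop:LocalPolysRoots}, \ref{Prop:FormOfMinval}) still apply with $v_\infty$ in place of $v$, and that the chosen $a$ genuinely lies in $D$ rather than merely in $K$, is the delicate bookkeeping that the proof will have to carry out carefully; everything else is a faithful transcription of the valuation-domain argument.
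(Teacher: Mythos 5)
Your proposal follows essentially the same route as the paper's proof: assume $(x,\varpi)$ is invertible, extract $\rho\in(x,\varpi)^{-1}$ with $\minval_{\rho,v_\infty}(v_\infty(\varpi))=-v_\infty(\varpi)$, use the eventual agreement of $\minval_{\rho,v_i}$ with $\minval_{\rho,v_\infty}$ together with the unbounded ramification property to force the slopes to satisfy $c\le -1<0\le c'$, and then re-run the construction of Lemma \ref{Lem:ValuationLemma} inside $D$ (lifting roots via $V_\infty/\m_\infty\cong D/(D\cap\m_\infty)$ and supplying the small-valued element $h$ from unbounded ramification) to contradict $\varpi\rho\in\IntR(D)$. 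The outline and the points you flag as delicate are exactly the ones the paper handles, so the approach is correct.
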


\begin{proof}
	We can assume that $D$ is Prüfer since if $D$ is not Prüfer, then $\IntR(D)$ is not Prüfer as well. We will suppose that $(x, \varpi) \subseteq \IntR(D)$ is invertible for a contradiction. Then there exist $\varphi, \psi \in (x, \varpi)^{-1}$ such that $\varphi \cdot x + \psi \cdot \varpi = 1$. Let $\alpha \coloneqq v_\infty(\varpi)$. There exists $a \in D$ with $v_\infty(a) = \alpha$ such that $\minval_{\varphi, v_\infty}(\alpha) = v_\infty(\varphi(a))$ and $\minval_{\psi, v_\infty}(\alpha) = v_\infty(\psi(a))$. Considering $\varphi(x)x + \psi(x)\varpi = 1$, we deduce that \[0 \geq \min\{v_\infty(\varphi(a)) + \alpha, v_\infty(\psi(a)) + \alpha\}.\] Because $\varphi, \psi \in (x, \varpi)^{-1}$, we have $v_\infty(\varphi(a)) + \alpha, v_\infty(\psi(a)) + \alpha\geq 0$. This means that $v_\infty(\varphi(a)) + \alpha= 0$ or $v_\infty(\psi(a)) + \alpha=0$. Either way, we have some $\rho \in (x, \varpi)^{-1}$ such that $\minval_{\rho, v_\infty}(\alpha) = - \alpha$. 
	
	Now for some $\epsilon \in \Q\Gamma_\infty$ with $\varepsilon > 0$, we have
	\[
	\minval_{\rho, v_\infty}(\gamma) = \begin{cases}
		c\gamma + \beta, & \alpha-\epsilon < \gamma \leq \alpha,\\
		c'\gamma + \beta', & \alpha \leq \gamma < \alpha + \epsilon,\\
	\end{cases}
	\]
	where $c, c' \in \Z$ and $\beta, \beta'\in \Gamma$. Since there are a finite number of coefficients which appear in $\rho$ and their values under $v_i$ is eventually constant and equal to their values under $v_\infty$, there exists $N \in \N$ such that for all $i \geq N$, we have $\minval_{\rho, v_\infty} = \minval_{\rho, v_i}$ and additionally $\alpha \in \Gamma_i$. Because $D$ has the unbounded ramification property, there exists some $n \geq N$ such that there exists $\delta \in \Gamma_n$ such that $0 < \delta < \epsilon$. Then $\alpha - \delta$ and $\alpha + \delta$ are both in $\Gamma_n$. Since $\rho \in (x, \varpi)^{-1}$ as a fractional ideal of $\IntR(D)$, we have $\minval_{\rho, v_n} \geq -\min\{\minval_{x, v_n}, \minval_{\varpi, v_n} \}$ on $(\Gamma_n)_{\geq 0}$. Combining this with the fact that $\alpha-\epsilon<\alpha - \delta< \alpha + \delta < \alpha + \epsilon$, we force $c \leq -1$ and $c' \geq 0$. Therefore, $c' - c > 0$. Also note that $\minval_{\rho, v_i}(\alpha) = -\alpha$ for all $i \geq N$.
	
	Write $\rho = \frac{f}{g}$ for some $f, g \in D[x]$. Since $V_\infty/\m_\infty$ is algebraically closed, $\loc_{f, \varpi, v_\infty}$ and $\loc_{g, \varpi, v_\infty}$ factor completely modulo $ \m_\infty$.  This means we can write
	\begin{align*}
		\frac{f(\varpi x)}{t} &= x^m(x-u_1)^{e_1} \cdots (x-u_r)^{e_r} + h_1(x),\\ \frac{g(\varpi x)}{t'}& = x^{m'}(x-u_1')^{e_1'} \cdots (x-u_r)^{e_r'} + h_2(x),
	\end{align*}
	for some $t, t' \in K$ with $v_\infty(t) = \minval_{f, v_\infty}(\alpha)$, $v_\infty(t') = \minval_{g, v_\infty}(\alpha)$, $u_1, \dots, u_r \in D$ representing distinct nonzero residues modulo $D \cap \m_\infty$, and $h_1, h_2 \in \m_\infty[x]$. We by Lemma \ref{Lem:PowersOfLoc} have that \[(e_1'-e_1) + \cdots + (e_r' - e_r) = c'- c > 0,\] so without loss of generality by permuting the indices, we can assume that $e_1' > e_1$. Write $h_1(x) = \sum b_jx^j$ and $h_2(x) = \sum b_j' x^j$ with each $b_j$ and $b_j'$ being in $ \m_\infty$. Furthermore, we have some $M \geq N$ such that for all $i \geq M$, we get $v_i(u_\ell) = 0$ and $v_i(u_\ell - u_{\ell'}) = 0$ for $\ell, \ell'$ distinct,  $v_i(b_j) = v_\infty(b_j) > 0$, and $v_i(b_j') = v_\infty(b_j') > 0$. Because $D$ has the unbounded ramification property, there exists $M' \geq M$ such that there exists $d \in D$ with $0 < v_{M'}(d) < \frac{1}{e_1'}\min\limits_j\{v_\infty(b_j), v_\infty(b_j') \}$. 
	
	Now we evaluate $\rho$ at $x = \varpi(d + u_1)$. We get

	\begin{align*}
		\rho(\varpi(d + u_1)) &= \frac{f(\varpi(d+u_1))}{g(\varpi(d+u_1))} \\&= \frac{t}{t'} \cdot \frac{d^{e_1}(d+u_1)^m(d+u_1-u_2)^{e_2} \cdots (d + u_1 - u_r)^{e_r} + h_1(d+u_1) }{d^{e_1'}(d+u_1)^{m'}(d+u_1-u_2)^{e_2'} \cdots (d+u_1 - u_r)^{e_r'} + h_2(d + u_1)}
	\end{align*}
	We have that $v_{M'}(d^{e_1}(d+u_1)^m(d+u_1-u_2)^{e_2} \cdots (d + u_1 - u_r)^{e_r}) = e_1v_{M'}(d)$ and $v_{M'}(h_1(d+u_1)) >  e_1v_{M'}(d)$ since each coefficient of the polynomial $h_1$ has coefficients with $v_{M'}$ valuation strictly greater than $e_1v_{M'}(d)$. Thus, \[v_{M'}(d^{e_1}(d+u_1)^m(d+u_1-u_2)^{e_2} \cdots (d + u_1 - u_r)^{e_r} + h_1(d+u_1)) = e_1v_{M'}(d).\] Similarly, \[v_{M'}(d^{e_1'}(d+u_1)^{m'}(d+u_1-u_2)^{e_2'} \cdots (d+u_1 - u_r)^{e_r'} + h_2(d + u_1)) = e_1'v_{M'}(d).\] This means that
	\begin{align*}
		v_{M'}(\rho(\varpi(d+u_1))) &= v_{M'}\left(\frac{t}{t'}\right) + e_1v_{M'}(d) - e_1'v_{M'}(d)\\ &= \minval_{\rho, v_{M'}}(\alpha) + (e_1-e_1')v_{M'}(d)\\& = -\alpha + (e_1-e_1')v_{M'}(d)\\& < -\alpha. 
	\end{align*}
	This is a contradiction since $\rho \cdot \varpi \in \IntR(D)$ but $v_{M'}(\rho(\varpi(d + u_1)) \varpi) < 0$. 
\end{proof}

\begin{example}
	Let $k$ be an uncountable algebraically closed field and form the field $K= k(s, t_1, t_2, \dots)$. Take $\{\alpha_1, \alpha_2, \dots\} \subseteq \R_{> 0}$ to be a $\Q$-linearly independent subset. For each $i \in \N$, define $v_i: K^\times \to \left(\sum\limits_{j\neq i} \Z\alpha_j \right) \oplus \Z \oplus \frac{1}{i}\Z$, ordered lexicographically, as the valuation such that 
	\[v_i(k^\times) = \{(0,0,0)\}, v_i(t_i) = \left(0,0, \frac{1}{i} \right), v_i(s) = (0,1,0), v_i(t_j) = (\alpha_j, 0,0),\]
	for $j \neq i$ and $v_i(f)$ for $f \in k[s, t_1, t_2, \dots]$ is the minimum of the $v_i$ values of each monomial of $f$. Then each $v_i$ extends uniquely to $K$. Let $V_i$ be the associated valuation domain of $v_i$. Define the domain $D = \bigcap\limits_{i =1 }^\infty V_i$. We see that $D$ is a sequential domain with field of fractions $K$ (since $k \subseteq D$ and $s, t_1, t_2, \dots \in D$) where each $V_i$ has an algebraically closed residue field and a principal maximal ideal, generated by $t_i$. Additionally, each value group embeds naturally into $\left(\sum\limits_{i = 1}^\infty \Z\alpha_j \right) \oplus \Z \oplus \Q$, and $v_\infty$ gives rise to the associated valuation overring $V_\infty$, which also has algebraically closed residue field and principal maximal ideal, generated by $s$. This domain $D$ also has the unbounded ramification property, since $v_i(t_i)$ can be arbitrarily small. We also know that $D$ is a Bézout domain and therefore a Prüfer domain by \cite[Theorem 6.6]{OlberdingRoitman} since $D$ is the intersection of countably many valuation domains containing $k$, an uncountable subfield. However, by the previous proposition, $\IntR(D)$ is not a Prüfer domain.
	
	Notice that each $V_i$ for $i \in \N \setminus \{0\}$ has a principal maximal ideal. The same holds true for $V_\infty$. Thus, Corollary \ref{Cor:ValOverringObstruction} cannot be used immediately to show that $\IntR(D)$ is not a Prüfer domain.  
\end{example}

\subsection{Intersection of monic and singular Prüfer domains}

We know that if $D$ is a Prüfer domain that is monic or singular, then $\IntR(D)$ is also a Prüfer domain. We want to see if there are other instances of Prüfer domains whose ring of integer-valued rational functions is also a Prüfer domain. For this reason, we consider Prüfer domains $D$
that are neither monic nor singular. Some domains of this form are formed by the intersection of a monic Prüfer domain and a singular Prüfer domain. 

We first consider a family of Prüfer domains obtained from intersecting a monic Prüfer domain and a singular Prüfer domain with some extra conditions. We then give some corollaries with conditions that are easier to verify. This includes the case of a finite intersection of valuation domains. Examples will follow to showcase the different conditions.

\begin{theorem}\label{Thm:MonicSingular}
	Let $D$ be a Prüfer domain with $K$ as the field of fractions. Suppose that $D$ can be written as $D = D_1 \cap D_2$, where $D_1$ is a monic Prüfer overring of $D$ and $D_2$ is a singular Prüfer overring of $D$. Suppose there exist $n \in \N$, a collection $\{V_\lambda\}$ of valuation overrings of $D_1$, and a collection $\{W_\mu\}$ of valuation overrings of $D_2$ such that

	\begin{itemize}
		\item $D_1 = \bigcap\limits_\lambda V_\lambda$ and $D_2 = \bigcap\limits_{\mu} W_\mu$,
		\item  the maximal ideal of $W_\mu$ is generated by some $\varpi_\mu \in W_\mu$,
		\item there exists some $d \in D_2$ such that $0 < w_\mu(d) < nw_\mu(\varpi_\mu)$ for all $\mu$, where $w_\mu$ is the valuation corresponding to $W_\mu$,
		\item $v_\lambda(d) = 0$ for all $\lambda$, where $v_\lambda$ is the valuation corresponding to $V_\lambda$, and
		\item there exist polynomials $f, g, h_1, h_2 \in D[x]$ such that 
		\begin{itemize}
			\item  $f$ is monic of degree $n$ such that $f(D_1) \subseteq D_1^\times$, 
			\item $g(D_1) \subseteq D_1^\times$, $g$ has leading coefficient $d$, $\deg g = n$, and $w_\mu(g(a)) = w_\mu(da^n)$ for all $\mu$ and for all $a \in K$ such that $w_\mu(a) < 0$, 
			\item $h_1$ is monic and $\deg h_1 = n$, $\deg h_2 < n$, and
			\item $dfh_1 + gh_2$ is unit-valued for each $V_\lambda$ and $W_\mu$. 
		\end{itemize}
	\end{itemize}
	Then $\IntR(E, D)$ is Prüfer domain with torsion Picard group for any subset $E \subseteq K$. 
\end{theorem}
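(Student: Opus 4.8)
The plan is to realize $R := \IntR(E,D)$ as the intersection of the two overrings whose structure is already understood, and then to glue. Concretely, since $D = D_1 \cap D_2$, a rational function is $D$-valued on $E$ precisely when it is $D_1$-valued and $D_2$-valued on $E$, so
\[
\IntR(E,D) = \IntR(E,D_1) \cap \IntR(E,D_2),
\]
where, by the cited theorems, $\IntR(E,D_1)$ is a Prüfer domain with torsion Picard group (as $D_1$ is monic) and $\IntR(E,D_2)$ is a Bézout domain (as $D_2$ is singular). To prove $R$ is Prüfer it is enough to show that $R_{\mathfrak M}$ is a valuation domain for every maximal ideal $\mathfrak M$ of $R$; since any localization of a Prüfer domain at a prime is a valuation domain, it suffices to produce, for each $\mathfrak M$, an element $s \in R \setminus \mathfrak M$ such that $s^{-1}\IntR(E,D_1) \subseteq R$ or $s^{-1}\IntR(E,D_2) \subseteq R$ (after which $R_{\mathfrak M}$ contains one of the two Prüfer overrings and hence is a localization of a Prüfer domain, so a valuation domain). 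Because $\IntR(E,D_1) \subseteq \IntR(E,D_1)$ automatically, the first containment reduces to controlling the behaviour of elements of $\IntR(E,D_1)$ along every valuation $w_\mu$ of $D_2$, and symmetrically for the second; this is where the hypothesis data $d, f, g, h_1, h_2$ enter.

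The dichotomy at $\mathfrak M$ is decided by whether the patching polynomial $\Phi := dfh_1 + gh_2$ (which is unit-valued on every $V_\lambda$ and every $W_\mu$, hence unit-valued on $D$) lies in $\mathfrak M$. The monic unit-valued polynomial $f$ and its translates $f(x-c)$ furnish a cofinal family of unit-valued polynomials on $D_1$, so ``pulling $\IntR(E,D_1)$ into $R_{\mathfrak M}$'' reduces to pulling in finitely much denominator data — essentially inverses of products of translates of $f$, together with polynomials — and the point of $g$ (degree $n$, leading coefficient $d$, with $w_\mu(g(a)) = w_\mu(da^n)$ whenever $w_\mu(a) < 0$) together with $\Phi$ is that the rational functions $\dfrac{g}{f}$ and $\dfrac{\Phi}{f^2}$, suitably scaled by powers of $d$, dominate these denominators on every $W_\mu$ while remaining $D_1$-valued. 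On the singular side the element $d$ with $0 < w_\mu(d) < n\,w_\mu(\varpi_\mu)$ captures all of the $w_\mu$-ramification uniformly, and $\IntR(E,D_2)$ being Bézout means its finitely generated ideals are principal, so once the $D_1$-side interaction is controlled the $D_2$-side is immediate. Thus for a given $\mathfrak M$ one constructs $s$ as a product of translates of $f$ (if $\Phi \notin \mathfrak M$) or of the relevant $d$-scaled polynomials (if $\Phi \in \mathfrak M$), and checks the required domination.

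The verification of that domination is the technical core and the main obstacle: a priori the denominators of elements of $\IntR(E,D_1)$ are unbounded, so no single $s$ can work globally, and one must instead argue after localization and interpolate the two sides at the ``critical'' values of $w_\mu(a)$ where the piecewise-linear minimum valuation functions change slope. Writing the candidate generators as quotients of polynomials and comparing $\minval_f$, $\minval_g$, and $\minval_{\Phi}$ relative to each $w_\mu$ and each $v_\lambda$, one uses Proposition \ref{Prop:AttainMinvalRat} and Corollary \ref{Cor:AttainMinval} to find arguments realizing these minimum valuations, Lemma \ref{Lem:MinvalEqualityAE} to handle the generic values, and Lemma \ref{Lem:ValuationLemma} (in the flavour already exploited in Theorem \ref{Thm:IntROverNonMonicNonSingularValDomIsNotPrufer}) to rule out the bad cancellations at the finitely many slope changes; the degree hypotheses $\deg f = \deg g = n$, $h_1$ monic of degree $n$, $\deg h_2 < n$, together with $0 < w_\mu(d) < n\,w_\mu(\varpi_\mu)$ and $v_\lambda(d) = 0$, are exactly what make the slopes of $\minval_{\Phi}$ relative to $w_\mu$ match those of the denominators so that no value of $w_\mu(a)$ produces a pole. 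This is the step I expect to be longest and most delicate.

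Finally, the torsion Picard group is obtained by the same local patching applied to an invertible ideal $I$ of $R$: the extension $I\IntR(E,D_1)$ is invertible in a domain with torsion Picard group, so a power $I^m\IntR(E,D_1)$ is principal, and $I\IntR(E,D_2)$ is already principal since $\IntR(E,D_2)$ is Bézout; by the dichotomy above each $R_{\mathfrak M}$ equals a localization of $\IntR(E,D_1)$ or of $\IntR(E,D_2)$, so $I^m$ is locally principal, hence invertible, and the comaximality encoded by $\Phi = dfh_1 + gh_2$ lets one patch a generator of $I^m\IntR(E,D_1)$ with a generator of $I^m\IntR(E,D_2)$ into a single generator over $D$, showing $I^m$ is principal in $\IntR(E,D)$. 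Hence $\Pic(\IntR(E,D))$ is torsion, completing the proof.
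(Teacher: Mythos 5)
Your opening identity $\IntR(E,D)=\IntR(E,D_1)\cap\IntR(E,D_2)$ and the cited facts about the two factors are fine, but the reduction you build on them has two genuine gaps. First, the dichotomy you propose at each maximal ideal $\mathfrak M$ --- whether $\Phi=dfh_1+gh_2$ lies in $\mathfrak M$ --- cannot do the work you assign to it: since $\Phi$ is unit-valued for every $V_\lambda$ and every $W_\mu$, for $a\in D$ the value $\Phi(a)$ is a unit in each $V_\lambda$ and each $W_\mu$, hence in $D=\bigcap_\lambda V_\lambda\cap\bigcap_\mu W_\mu$; so $\Phi$ is a unit of $\IntR(D)$ and lies in no maximal ideal (and for $E\not\subseteq D$ it need not lie in $\IntR(E,D)$ at all). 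Second, and more seriously, the claim that every localization $R_{\mathfrak M}$ contains one of $\IntR(E,D_1)$ or $\IntR(E,D_2)$ is precisely the hard content of the theorem and is never established; it is far from clear for the non-pointed maximal ideals (ultrafilter limits of pointed ones, for instance), and you explicitly defer the ``domination'' verification that would be needed. As written this is a plan, not a proof.

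The paper's argument avoids localization entirely and is more concrete: it sets $\theta=df/g$, checks $\theta\in\IntR(K,D)$ by computing $v_\lambda(\theta(a))$ and $w_\mu(\theta(a))$ separately in the cases $v(a)\geq 0$ and $v(a)<0$ (this is where the hypotheses on $f$, $g$, and $d$ enter), and then shows that for any nonzero $\varphi,\psi\in\IntR(E,D)$ the explicit element
\[
\rho=\theta\left(\frac{\varphi}{\psi}\right)\psi^n h_1\left(\frac{\varphi}{\psi}\right)+\psi^n h_2\left(\frac{\varphi}{\psi}\right)
\]
satisfies $v(\rho(a))=\min\{nv(\varphi(a)),nv(\psi(a))\}$ for every $v=v_\lambda,w_\mu$ (here the unit-valuedness of $dfh_1+gh_2$ handles the case $v(\varphi(a))\geq v(\psi(a))$), whence $(\varphi,\psi)^n=(\rho)$ is principal. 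Invertibility of $(\varphi,\psi)$, hence the Prüfer property, and --- by iterating this pairing over $m$ generators to make $(\varphi_1,\dots,\varphi_m)^{n^{m-1}}$ principal --- torsion of the Picard group follow at once. To salvage your route you would have to actually prove the containment of one of $\IntR(E,D_1)$, $\IntR(E,D_2)$ in each $R_{\mathfrak M}$, which I do not see how to do from the stated hypotheses; constructing the generator of $(\varphi,\psi)^n$ directly is the workable path.
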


\begin{proof}
	Fix a subset $E \subseteq K$. 
	
	Set $\theta(x) = \frac{df(x)}{g(x)}$. Fix $\mu$. Let $a \in K$. If $w_\mu(a) \geq 0$, then  $w_\mu((dfh_1 + gh_2)(a)) = 0$, which implies $w_\mu((gh_2)(a)) = 0$ and thus $w_\mu(g(a)) = 0$. Then we calculate
	\[
	w_\mu(\theta(a)) = \begin{cases}
		w_\mu(df(a)) \geq 0, & \text{if $w_\mu(a) \geq 0$},\\
		w_\mu(da^n) - w_\mu(da^n) =0, & \text{if $w_\mu(a) < 0$}.
	\end{cases}
	\]
	Similarly, if we fixed $\lambda$, then since $g(D_1) \subseteq D_1^\times$ and the leading coefficient of $g$ is a unit of $D_1$, we know that $g$ is unit-valued for $V_\lambda$ \cite[Proposition 2.1]{PruferNonDRings}. Let $a \in K$, we have that
	\[
	v_\lambda(\theta(a)) = \begin{cases}
		v_\lambda(df(a)) = 0, & \text{if $v_\lambda(a) \geq 0$},\\
		v_\lambda(da^n) - v_\lambda(da^n) =0, & \text{if $v_\lambda(a) < 0$}.
	\end{cases}
	\]
	This shows that $\theta \in \IntR(K,D)$. 
	
	Now let $a, b \in K$ with $b \neq 0$. We define 
	\[a \diamond b \coloneqq \theta\left(\frac{a}{b} \right)b^n h_1\left(\frac{a}{b}\right) +b^n h_2\left(\frac{a}{b}\right).\]
	We claim that $v(a \diamond b) = \min\{v(a^n), v(b^n) \}$ for any valuation $v = v_\lambda$ or $v = w_\mu$. 
	
	Fix a valuation $v = v_\lambda$ or $v = w_\mu$. Let $a, b \in K$ with $b \neq 0$. We have that 
	\[
	\frac{a \diamond b}{b^n} = \theta\left(\frac{a}{b} \right) h_1\left(\frac{a}{b}\right) + h_2\left(\frac{a}{b}\right) = \left(\frac{dfh_1 + gh_2}{g}\right)\left(\frac{a}{b}\right),
	\]
	so if $v(a) \geq v(b)$, then $v(a \diamond b) = v(b^n)$ since both $dfh_1 + gh_2$ and $g$ are unit-valued over the valuation ring associated with $v$. 
	
	Now suppose that $v(a) < v(b)$. Write $h_1(x) = a_0 + a_1x + \cdots + a_{n-1}x^{n-1} + x^n$ and $h_2(x) = b_0 + b_1x + \cdots + b_{r}x^{r}$ with $a_i, b_i \in D$ and some $r < n$. Then we have
	\[
	a \diamond b \coloneqq \theta\left(\frac{a}{b} \right)(a_0b^n + a_1ab^{n-1} + \cdots + a_{n-1}a^{n-1}b + a^n) +(b_0b^n + b_1ab^{n-1} + \cdots + b_{r}a^{r}b^{n-r}).
	\]
	If $v(a) < v(b)$, then $v\left(\theta\left(\frac{a}{b} \right)\right) = 0$, so $v(a \diamond b) = v(a^n)$, as desired. 
	
	Now take $\varphi, \psi \in \IntR(E, D)$ with $\psi \neq 0$. We claim that $(\varphi, \psi)^n$ is generated by
	\[
	\rho \coloneqq \theta\left(\frac{\varphi}{\psi} \right)(a_0\psi^n + a_1\varphi\psi^{n-1} + \cdots + a_{n-1}\varphi^{n-1}\psi + \varphi^n) +(b_0\psi^n + b_1\varphi\psi^{n-1} + \cdots + b_{r}\varphi^{r}\psi^{n-r}).
	\]
	We see that $\rho \in (\varphi, \psi)^n$ since $\theta \in \IntR(K,D)$ and $a_i, b_i \in D$. Furthermore, let $j, k \in \N$ such that $j+k = n$. Then for each $a \in K$ such that $\psi(a) \neq 0$, we have \[v(\rho(a)) = v(\varphi(a) \diamond \psi(a)) = \min\{v(\varphi(a))^n, v(\psi(a))^n\} \leq v(\varphi(a)^j\psi(a)^k)\] for all valuations $v = v_\lambda$ or $w_\mu$. This implies that $\rho$ divides $\varphi^j\psi^k$ in $\IntR(K,D)$ and therefore also in $\IntR(E,D)$. Thus, $(\varphi, \psi)^n \subseteq (\rho)$. We then get that $(\varphi, \psi)^n = (\rho)$. 
	
	Since a power of an ideal of $\IntR(E,D)$ generated by two elements is principal, we also know that this ideal is invertible. Thus, $\IntR(E,D)$ is a Prüfer domain. 
	
	Now let $(\varphi_1, \dots, \varphi_m)$ be a finitely-generated, and thus invertible, ideal of $\IntR(E,D)$. We can ensure that $\varphi_1, \dots, \varphi_m$ are all nonzero. Then as before, for each $a \in K$ except for the finitely many values such that $a$ is a pole for some $\varphi_i$, we see that 
	\[
	v((((\varphi_1(a) \diamond \varphi_2(a) ) \diamond \varphi_3(a)^n) \diamond \varphi_4(a)^{n^2}) \diamond \cdots \diamond \varphi_m(a)^{n^{m-2}} ) = \min_i\{ n^{m-1}v(\varphi_i(a)) \}
	\]
	for all valuations $v = v_\lambda$ or $v=w_\mu$. Using the same arguments as before, it follows that $(\varphi_1, \dots, \varphi_m)^{n^{m-1}}$ is principal. Thus, the Picard group of $\IntR(E,D)$ is torsion.
\end{proof}

\begin{corollary}\label{Cor:MonicSingularApproximation}
	Let $D$ be a Prüfer domain with $K$ as the field of fractions. Suppose that $D$ can be written as $D = D_1 \cap D_2$, where $D_1$ is a monic Prüfer overring of $D$ and $D_2$ is a singular Prüfer overring of $D$. Suppose there exist $n \in \N$, a collection $\{V_\lambda\}$ of valuation overrings of $D_1$, and a collection $\{W_\mu\}$ of valuation overrings of $D_2$ such that

	\begin{itemize}
		\item $D_1 = \bigcap\limits_\lambda V_\lambda$ and $D_2 = \bigcap\limits_{\mu} W_\mu$,
		\item  the maximal ideal of $W_\mu$ is generated by some $\varpi_\mu \in W_\mu$,
		\item there exists some $d \in D_2$ such that $0 < w_\mu(d) < nw_\mu(\varpi_\mu)$ for all $\mu$, where $w_\mu$ is the valuation corresponding to $W_\mu$,
		\item $v_\lambda(d-1) > 0$ for all $\lambda$, where $v_\lambda$ is the valuation corresponding to $V_\lambda$, and
		\item there exist a monic polynomial $f \in D[x]$ of degree $n$ such that $f(D_1) \subseteq D_1^\times$ and $f(0) \in D^\times$.
	\end{itemize}
	Then $\IntR(E, D)$ is Prüfer domain with torsion Picard group for any subset $E \subseteq K$. 
\end{corollary}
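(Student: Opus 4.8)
The plan is to deduce the statement from Theorem~\ref{Thm:MonicSingular} by keeping the same data $D,D_1,D_2,n,\{V_\lambda\},\{W_\mu\},d$ and producing the missing polynomials $g,h_1,h_2\in D[x]$ out of $f$ and $d$. Writing $f(x)=x^n+c_{n-1}x^{n-1}+\cdots+c_1x+c_0$ with $c_0=f(0)\in D^\times$, I would take
\[
g(x):=d\,f(x)+c_0(1-d),\qquad h_1(x):=f(x)-1,\qquad h_2(x):=1 .
\]
Since $0<w_\mu(d)<n\,w_\mu(\varpi_\mu)$ forces $n\ge 1$, $h_1$ is monic of degree $n$ and $\deg h_2=0<n$. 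Also $v_\lambda(d-1)>0$ gives $v_\lambda(d)=0$ for every $\lambda$, so the theorem's weaker requirement $v_\lambda(d)=0$ holds and $d\in D_1^\times$; and by \cite[Proposition 2.1]{PruferNonDRings} the monic $f$ with $f(D_1)\subseteq D_1^\times$ is unit-valued on each $V_\lambda$, so by Proposition~\ref{Prop:UVNoRoots} $f\bmod\m_{V_\lambda}$ has no root in the residue field $V_\lambda/\m_{V_\lambda}$.

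Next I would verify the conditions on $g$. Expanding, $g(x)=d\,x^n+d\,c_{n-1}x^{n-1}+\cdots+d\,c_1x+c_0\in D[x]$ has degree $n$ and leading coefficient $d$. For $g(D_1)\subseteq D_1^\times$: if $a\in D_1$ then $f(a)\in D_1^\times$ and $v_\lambda(d)=0$, so $v_\lambda(d\,f(a))=0$, while $v_\lambda(c_0(1-d))=v_\lambda(d-1)>0$; hence $v_\lambda(g(a))=0$ for all $\lambda$, i.e.\ $g(a)\in D_1^\times$. For the condition at each $W_\mu$: fix $a\in K$ with $w_\mu(a)<0$. The coefficients of $f$ lie in $D\subseteq W_\mu$, so the top term of $f(a)$ strictly dominates, giving $w_\mu(f(a))=n\,w_\mu(a)$ and $w_\mu(d\,f(a))=w_\mu(d)+n\,w_\mu(a)=w_\mu(d a^n)$. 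Since $\varpi_\mu$ generates $\m_{W_\mu}$ we have $w_\mu(a)\le -w_\mu(\varpi_\mu)$, hence $w_\mu(d)+n\,w_\mu(a)\le w_\mu(d)-n\,w_\mu(\varpi_\mu)<0=w_\mu(c_0(1-d))$; therefore $w_\mu(g(a))=w_\mu(d\,f(a))=w_\mu(d a^n)$, as required.

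Finally I would check that $d\,f\,h_1+g\,h_2=d\,f(f-1)+g$ is unit-valued on each $V_\lambda$ and $W_\mu$ by reducing modulo the maximal ideals. Over $V_\lambda$ one has $g-f=(d-1)(f-c_0)$, whose coefficients lie in $\m_{V_\lambda}$, so $g\equiv f$ and $d\equiv 1$, giving $d\,f\,h_1+g\,h_2\equiv f(f-1)+f=f^2\pmod{\m_{V_\lambda}}$, which has no root in $V_\lambda/\m_{V_\lambda}$ since $f$ has none; so $d\,f\,h_1+g\,h_2$ is unit-valued on $V_\lambda$. Over $W_\mu$, $d\equiv 0$ makes $d\,f\,h_1\equiv 0$ and $g\equiv c_0\pmod{\m_{W_\mu}}$, whence $d\,f\,h_1+g\,h_2\equiv c_0$, a nonzero constant in $W_\mu/\m_{W_\mu}$; so $d\,f\,h_1+g\,h_2$ is unit-valued on $W_\mu$. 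With the structural hypotheses inherited directly from the statement, all hypotheses of Theorem~\ref{Thm:MonicSingular} are then met, and the conclusion that $\IntR(E,D)$ is a Prüfer domain with torsion Picard group for every $E\subseteq K$ follows.

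Once the triple $(g,h_1,h_2)$ is in hand the argument is just these verifications; the only non-routine step is guessing $g$, which must keep leading coefficient $d$ while reducing to $f$ modulo each $\m_{V_\lambda}$ and to a nonzero constant modulo each $\m_{W_\mu}$, and the single place where the numerical input does real work is the inequality $w_\mu(d)+n\,w_\mu(a)<0$, where the hypothesis $w_\mu(d)<n\,w_\mu(\varpi_\mu)$ is used.
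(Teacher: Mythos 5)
Your proposal is correct and follows essentially the same route as the paper: it reduces to Theorem~\ref{Thm:MonicSingular} with the identical choice of $g$ (your $df+c_0(1-d)$ is exactly the paper's $d(f-f(0))+f(0)$), and all the verifications match. The only difference is the cosmetic choice $h_1=f-1$, $h_2=1$ in place of the paper's $h_1=f-f(0)$, $h_2=f(0)$, which yields $dfh_1+gh_2=df^2+c_0(1-d)$ instead of $df^2-df(0)^2+f(0)^2$; both reduce to $f^2$ modulo each $\m_{V_\lambda}$ and to a unit modulo each $\m_{W_\mu}$, so either works.
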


\begin{proof}
	We verify the conditions of Theorem \ref{Thm:MonicSingular}.
	
	We see that $v_\lambda(d) = 0$ for all $\lambda$ from the fact that $v_\lambda(d-1) > 0$. Furthermore, $g(x) \coloneqq d(f(x)-f(0)) + f(0)$ is unit-valued over $D_1$. This is because for any $a \in D_1$ and any $\lambda$, we have that $v_\lambda(g(a) - f(a)) = v_\lambda((d-1)(f(a) - f(0))) > 0$, so $v_\lambda(g(a)) = v_\lambda(f(a)) = 0$. Observe that the leading coefficient of $g$ is $d$. Also, fix a $\mu$ and let $a \in K$ such that $w_\mu(a) < 0$. Then $w_\mu(g(a)) = w_\mu(d(f(a)-f(0)) +f(0)) = w_\mu(da^n)$ since $w_\mu(f(a)-f(0)) = w_\mu(a^n)$. 
	
	Lastly, we set $h_1(x) \coloneqq f(x) - f(0)$ and $h_2(x) \coloneqq f(0)$. We have $n = \deg h_1 > \deg h_2$ and $h_1$ is monic. Next, we must check that $dfh_1 + gh_2$ is unit-valued for all $V_\lambda$ and all $W_\mu$. We have
	\begin{align*}
		(dfh_1 + gh_2)(x) &= df(x)(f(x) - f(0)) + (d(f(x) - f(0)) + f(0))f(0) \\&= df(x)^2 -df(x)f(0) + df(x)f(0) -df(0)^2 + f(0)^2 \\
		& = df(x)^2 -df(0)^2 + f(0)^2.
	\end{align*}
	For any $a \in D$, we have $w_\mu((dfh_1 + gh_2)(a)) = w_\mu(f(0)^2) = 0$ for all $\mu$ and 
	\[
	v_\lambda((dfh_1 + gh_2)(a) - f(a)^2) = v_\lambda((d-1)(f(a)^2-f(0))) > 0
	\]
	so $v_\lambda((dfh_1 + gh_2)(a)) = 0$ for all $\lambda$. The domain $D$ satisfies all of the hypotheses of Theorem \ref{Thm:MonicSingular}, so  $\IntR(E, D)$ is Prüfer domain with torsion Picard group for any subset $E \subseteq K$. 
\end{proof}

The intersection of finitely many valuation domains with the same field of fractions is a Prüfer domain \cite[Theorem 22.8]{Gilmer}. If these valuation domains are pairwise independent, we many use the approximation theorem for independent valuations \cite[Theorem 22.9]{Gilmer} to obtain the following result.

\begin{corollary}
	Let $V_1, \dots, V_r$ be pairwise independent valuation rings on the field $K$. Suppose that each $V_i$ has a principal maximal ideal or a residue field that is not algebraically closed and set $D = V_1 \cap \cdots \cap V_n$. Then $\IntR(E,D)$ is a Prüfer domain with torsion Picard group for every subset $E$ of $K$. 
\end{corollary}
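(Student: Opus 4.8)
The plan is to reduce to Corollary~\ref{Cor:MonicSingularApproximation} by separating the valuations into a ``monic part'' and a ``singular part''. Reorder $V_1,\dots,V_r$ so that $V_1,\dots,V_s$ have residue field that is not algebraically closed while $V_{s+1},\dots,V_r$ have principal maximal ideal (if some $V_i$ has both properties, assign it to either group). Set $D_1=V_1\cap\cdots\cap V_s$ and $D_2=V_{s+1}\cap\cdots\cap V_r$, so $D=D_1\cap D_2$; by \cite[Theorem 22.8]{Gilmer} each of $D_1,D_2$ is a Prüfer overring of $D$. If $s=0$, then $D=D_2$ is a singular Prüfer domain and the conclusion is \cite{IntValuedRational}; if $s=r$, then $D=D_1$, and the construction of $f$ below (using only $V_1,\dots,V_s$) shows $D$ is monic, so the conclusion is \cite{PruferNonDRings}. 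Hence assume $0<s<r$, let $\varpi_\mu$ generate the maximal ideal of $V_\mu$ for $\mu>s$, and take $\{V_\lambda\}=\{V_1,\dots,V_s\}$ and $\{W_\mu\}=\{V_{s+1},\dots,V_r\}$ in Corollary~\ref{Cor:MonicSingularApproximation}.

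Now I will construct the monic polynomial $f$ and the element $d$ required by that corollary; everything else in its hypothesis list is immediate. For $f$: for each $i\le s$ choose a nonconstant monic polynomial over $V_i/\m_i$ with no root and let $n_i\ge 2$ be its degree; since any power of a root-free polynomial is root-free, each $V_i/\m_i$ carries a monic root-free polynomial of the common degree $n\coloneqq\lcm(n_1,\dots,n_s)\ge 2$. Because $V_1,\dots,V_r$ are pairwise independent, the approximation theorem for independent valuations \cite[Theorem 22.9]{Gilmer} produces $a_0,\dots,a_{n-1}\in K$ with $a_j$ reducing modulo $\m_i$ to the required coefficient for every $i\le s$, with $w_\mu(a_j)\ge 0$ for every $\mu>s$ (so that $a_j\in D$), and with $w_\mu(a_0)=0$ for every $\mu>s$; note $a_0$ is automatically a unit at each $\m_i$ since a root-free polynomial has nonzero constant term. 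Then $f(x)=x^n+a_{n-1}x^{n-1}+\cdots+a_0\in D[x]$ is monic of degree $n$, its reduction modulo each $\m_i$ is root-free, so by Proposition~\ref{Prop:UVNoRoots} (together with $D_1^\times=\bigcap_{i\le s}V_i^\times$) it is unit-valued over $D_1$, and $f(0)=a_0\in D^\times$. For $d$: using \cite[Theorem 22.9]{Gilmer} again, pick $d\in K$ with $v_\lambda(d-1)>0$ for all $\lambda\le s$ and $w_\mu(d)=w_\mu(\varpi_\mu)$ for all $\mu>s$; then $d\in D_2$, $v_\lambda(d)=0$ for all $\lambda$, and $0<w_\mu(d)=w_\mu(\varpi_\mu)<n\,w_\mu(\varpi_\mu)$ because $n\ge 2$. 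All hypotheses of Corollary~\ref{Cor:MonicSingularApproximation} now hold, so $\IntR(E,D)$ is a Prüfer domain with torsion Picard group for every subset $E$ of $K$.

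The step I expect to be the main obstacle is the simultaneous approximation: one must exhibit, in one application of the independent-valuation approximation theorem, coefficients lying in $D$ (not merely in $D_1$) whose reductions modulo each $\m_i$ give a root-free polynomial of a single uniform degree and whose constant term is a unit of $D$, and then an auxiliary element $d$ meeting the valuation bounds. Pinning down that uniform degree --- namely $\lcm(n_1,\dots,n_s)$, which is where one uses that powers of root-free polynomials are root-free --- is the other point requiring a little care, since the same $n$ must serve both as $\deg f$ and as the constant in the inequality $w_\mu(d)<n\,w_\mu(\varpi_\mu)$.
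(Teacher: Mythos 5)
Your proposal is correct and follows essentially the same route as the paper: split the $V_i$ into those with non-algebraically-closed residue field (forming the monic part $D_1$) and those with principal maximal ideal (forming the singular part $D_2$), raise the local root-free polynomials to a common degree, and use the approximation theorem for pairwise independent valuations to glue the coefficients into a single $f\in D[x]$ with $f(0)\in D^\times$ and to produce the element $d$, then invoke Corollary~\ref{Cor:MonicSingularApproximation}. The only differences are cosmetic (explicit treatment of the edge cases $s=0$ and $s=r$, and using $\lcm(n_1,\dots,n_s)$ where the paper just says ``taking appropriate powers'').
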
 

\begin{proof}
	We can order the indices so that $V_1, \dots, V_s$ for some $s \leq r$ are such that for each $i = 1, \dots, s$, the residue field of $V_i$ is not algebraically closed and the residue fields for $V_{s+1}, \dots, V_r$ are algebraically closed. For $i \leq s$, this means there is some monic nonconstant unit valued polynomial $f_i$ for $V_i$. Taking appropriate powers of each of the $f_i$, we can assume that all of the $f_i$ have the same degree $n$. Now for each $i = 1, \dots, s$, write
	\[
	f_i(x) = x^n + \sum_{j=0}^{n-1} a_{ij} x^j,
	\]
	where each $a_{ij} \in V_i$. Then by the approximation theorem for pairwise independent valuations, we have that for each $j = 0,1, \dots, n-1$ an element $A_j \in K$ such that
	\[
	v_i(A_j - a_{ij}) > 0 \text{ for all $i = 1, \dots, s$ and } v_i(A_j) = 0 \text{ for all $i > s$},
	\]
	where $v_i$ denotes the valuation corresponding to $V_i$. Form
	\[
	F(x) \coloneqq x^n + \sum_{j=0}^{n-1} A_j x^j. 
	\]
	We claim that $F \in D[x]$ is unit-valued for $D_1 \coloneqq V_1 \cap \cdots \cap V_s$. Let $a \in D_1$ and let $i \in \{1, \dots, s\}$. Then 
	\[
	v_i(F(a)-f_i(a)) = v_i\left(\sum_{j=0}^{n-1} (A_j-a_{ij})a^j\right) > 0
	\]
	and $v_i(f_i(a)) = 0$ imply that $v_i(F(a)) = 0$. Therefore, $F(a) \in D_1^\times$. Furthermore, $F(0) = A_0$ and $v_i(A_0) = 0$ for all $i=1, \dots, r$, so $F(0) \in D^\times$. 
	
	Now let $D_2 = V_{s+1} \cap \cdots \cap V_{r}$. For $i = s+1, s+2, \dots, r$, the residue field of $V_i$ is algebraically closed so the maximal ideal of $V_i$ must be principal by hypothesis. Say the maximal ideal of $V_i$ is generated by some $\varpi_i \in V_i$. Then by the approximation theorem again, we have some $d \in K$ such that
	\[
	v_i(d-1) > 0 \text{ for $i \leq s$ and } v_i(d) = v_i(\varpi_i) \text{ for $i > s$}.
	\]
	Then we verify that $d \in D_2$ and $0 < v_i(d) = v_i(\varpi_i) < nv_i(\varpi)$ for all $i > s$. This means $D$ satisfies all of the hypotheses of the previous corollary and thus $\IntR(E, D)$ is a Prüfer domain with torsion Picard group for any subset $E \subseteq K$. 
\end{proof}

Here is an example of a Prüfer domain $D$ with a finite number of maximal ideals that is neither singular nor monic. The previous corollary allows us to determine that $\IntR(E, D)$ is a Prüfer domain for any subset $E \subseteq K$. 

\begin{example}
	Let $K = k(s,t)$, where $k$ is any real closed field. Also, let $\alpha \in \R$ with $\alpha > 0$ be irrational. Define a valuation $v_1:K \to \Z[\alpha] \cup \{\infty\}$ as follows. We first define $v_1$ on $k[s,t]\setminus\{0\}$ by
	\[
	v_1\left(\sum a_{i_1i_2} s^{i_1} t^{i_2}\right) = \min\{i_1+i_2\alpha \mid a_{i_1i_2} \neq 0 \},
	\]
	where $\sum a_{i_1i_2} s^{i_1} t^{i_2} \in k[s,t]\setminus\{0\}$.
	Then this function uniquely extends to a valuation on $K$. 
	
	Now we define a valuation $v_2:K \to \Z \oplus \Z \cup \{\infty\}$, ordered lexicographically. We first define another valuation $w_2:k(s) \to \Z \cup \{\infty\}$ as the valuation corresponding to the valuation ring $k[s]_{(s)}$ so that $w_2(s) = 1$. Let $\Delta \coloneqq k[s]_{(s)}/sk[s]_{(s)}$, the residue field of $k[s]_{(s)}$. Then define $w_2':\Delta(t) \to \Z \cup \{\infty\}$ as the valuation corresponding to $\Delta[t]_{(t^2+1)}$ so that $w_2'(t^2+1) = 1$. Now we can define $v_2$ on $k[s]_{(s)}[t] \setminus \{0\}$. Let $f(t) \in k[s]_{(s)}[t]$. We write $f(t) = dg(t)$ for some $d \in k[s]_{(s)}$ and $g(t) \in k[s]_{(s)}[t] \setminus sk[s]_{(s)}[t]$. Now, we have
	\[
	v_2(f) = (w_2(d), w_2'(g \mod sk[s]_{(s)})), 
	\]
	which extends uniquely to a valuation on $K$. 
	
	Let $V_1$ and $V_2$ be the valuation rings corresponding to $v_1$ and $v_2$, respectively. Then $V_1$ has residue field that isomorphic to $k$, which is not algebraically closed, and the maximal ideal is not principal. As for $V_2$, the ring $V_2$ has residue field isomorphic to $k[t]/(t^2+1)$, which is algebraically closed, and the maximal ideal is principal. Therefore, $D = V_1 \cap V_2$ is a Prüfer domain that is neither monic nor singular, but $V_1$ and $V_2$ are independent valuations on $K$, so $\IntR(E, D)$ is Prüfer for any subset $E$ of $K$ by the previous corollary. 
\end{example}

The next example is a Prüfer domain $D$ that is not the intersection of finitely many valuation domains, but we can use Corollary \ref{Cor:MonicSingularApproximation} to determine that $\IntR(E,D)$ is Prüfer. In the next two examples, we make use of the fact that the intersection of a countable number of valuation domains with the same field of fractions all containing a common uncountable field is a Bézout domain and thus a Prüfer domain \cite[Theorem 6.6]{OlberdingRoitman}.

\begin{example}
	Let $K = k(t_1, t_2, \dots)$, where $k$ is an uncountable algebraically closed field. Also let $\{\alpha_1, \alpha_2, \dots \} \subseteq \R_{> 0}$ be a $\Q$-linearly independent subset of $\R$. For $i \in \N \setminus \{0\}$, we define
	\begin{align*}
		v_i\left(\sum a_{e_1 e_2 \cdots} t_1^{e_1} t_2^{e_2} \cdots \right)& = \min\left\{ \sum_{j=2}^\infty e_j \alpha_{i+j} \,\middle\vert\, a_{e_1 e_2 \cdots}  \neq 0 \right\},\\
		w_i\left(\sum a_{e_1 e_2 \cdots}  a_{e_1 e_2 \cdots} t_1^{e_1} (t_2+1)^{e_2} t_3^{e_3} t_4^{e_4} \cdots \right) &= \min\left\{ \left( \sum_{j \neq 2} e_j \alpha_{i+j} , e_2 \right) \,\middle\vert\, a_{e_1 e_2 \cdots}  \neq 0 \right\},
	\end{align*}
	where both sums on the left range over $(e_1, e_2 ,\dots) \in \bigoplus\limits_{\ell=1}^\infty \N$ and each $a_{e_1 e_2 \cdots} \in k$ with all but finitely many $a_{e_1 e_2 \cdots}$ are $0$. The value group of each $w_i$ is $\left(\sum\limits_{j= 2}\Z \alpha_{i+j}\right) \oplus \Z$ endowed with left-to-right lexicographic ordering. These functions extend uniquely to valuations on $K$, since $K$ is the field of fractions of $k[t_1, t_2, \dots] = k[t_1, t_2+1, t_3, t_4, \dots]$. Let $V_i$ and $W_i$ be the valuation domains corresponding to $v_i$ and $w_i$, respectively. 
	
	For the valuation $v_i:K \to \sum\limits_{j=2}^{\infty}\Z \alpha_{i+j} \cup \{\infty\}$, $V_i$ has residue field isomorphic to $k(t)$, which is not algebraically closed, and the maximal ideal of $V_i$ is not principal. 
	
	As for $w_i: K \to \left(\sum\limits_{j\neq 2}\Z \alpha_{i+j}\right) \oplus \Z \cup \{\infty\}$, we have $W_i$ having residue field isomorphic to $k$, which is algebraically closed and the maximal ideal of $W_i$ is principal, generated by $t_2+1$. 
	
	Set $D_1 \coloneqq \bigcap\limits_{i=1}^\infty V_i$. We know that $D_1$ is a Prüfer domain since $D_1$ is the intersection of countably many valuation domains with a common uncountable subfield $k$. We have that $x^2 -(t_1 +1)$ is unit valued for $D_1$, so $D_1$ is monic. Furthermore, $K$ is the field of fractions of $D_1$ since $k \subseteq D_1$ and $t_1, t_2, \dots \in D_1$. 
	
	Set $D_2 \coloneqq \bigcap\limits_{i=1}^\infty W_i$. As with $D_1$, we can verify that $D_2$ is a Prüfer domain with field of fractions $K$. We also have $0 < w_i(t_2+1) < 2w_i(t_2+1)$, so $D_2$ is singular. 
	
	Now form $D \coloneqq D_1 \cap D_2$. Since $k \subseteq D$ and $t_1, t_2, \dots \in D$, we have that $K$ is the field of fractions of $D$ as well. Note that $D$ is also the intersection of countably many valuation domains with a common uncountable subfield $k$, so $D$ is Prüfer. 
	
	Now we verify the remaining conditions for $D$ for Corollary \ref{Cor:MonicSingularApproximation}. Here, we'll use $n = 2$. We have that $f(x)\coloneqq x^2 - (t_1 +1) \in D[x]$. Also, $v_i(-(t_1 +1)) = 0$ and $w_i(-(t_1 +1)) = 0$ for all $i$, so $f(0) \in D^\times$. Plus, $v_i((t_2+1)-1) = v_i(t_2) = \alpha_{i+2} > 0$ for all $i$. Therefore, by the corollary, we have that $\IntR(E,D)$ is Prüfer for any subset $E$ of $K$. 
\end{example}

The full power of Theorem \ref{Thm:MonicSingular} is used to determine that $\IntR(E,D)$ is Prüfer in the following example.

\begin{example}
	We let $K = k(t_1, t_2, \dots)$, where $k$ is an uncountable algebraically closed field. We also take $\{\alpha_1, \alpha_2, \dots \} \subseteq \R_{> 0}$ to be a $\Q$-linearly independent subset of $\R$. For $i \in \N \setminus\{0\}$, we define
	\begin{align*}
		v_i\left(\sum a_{e_1 e_2 \cdots} t_1^{e_1} t_2^{e_2} \cdots \right) &= \min\left\{ \sum_{j=i+1}^\infty e_j \alpha_{i+j} \,\middle\vert\, a_{e_1 e_2 \cdots}  \neq 0 \right\},\\
		w_i\left(\sum a_{e_1 e_2 \cdots} t_1^{e_1} t_2^{e_2}  \cdots \right) &= \min\left\{ \left( \sum_{j = 2}^\infty e_j \alpha_{i+j} , e_1 \right) \,\middle\vert\, a_{e_1 e_2 \cdots}  \neq 0 \right\},
	\end{align*}
	where both sums on the left range over $(e_1, e_2 ,\dots) \in \bigoplus\limits_{\ell=1}^\infty \N$ and each $a_{e_1 e_2 \cdots} \in k$ with all but finitely many $a_{e_1 e_2 \cdots}$ are $0$. The value group of each $w_i$ is $\left(\sum\limits_{j= 2}\Z \alpha_{i+j}\right) \oplus \Z$ endowed with left-to-right lexicographic ordering. These functions extend uniquely to valuations on $K$. Let $V_i$ and $W_i$ be the valuation rings corresponding to $v_i$ and $w_i$, respectively.

	For the valuation $v_i:K \to \sum\limits_{j=i+1}^{\infty}\Z \alpha_{i+j} \cup \{\infty\}$, $V_i$ has residue field isomorphic to $k(t_1, \dots, t_i)$, which is not algebraically closed, and the maximal ideal of $V_i$ is not principal. 
	
	As for $w_i: K \to \left(\sum\limits_{j= 2}\Z \alpha_{i+j}\right) \oplus \Z \cup \{\infty\}$, we have $W_i$ having residue field isomorphic to $k$, which is algebraically closed and the maximal ideal of $W_i$ is principal, generated by $t_1$. 
	
	Set $D_1 \coloneqq \bigcap\limits_{i=1}^\infty V_i$, $D_2 \coloneqq \bigcap\limits_{i=1}^\infty W_i$, and $D \coloneqq D_1 \cap D_2$. Since $D_1, D_2$, and $D$ are all countable intersections of valuation domains all containing $k$, an uncountable field, we know that $D_1, D_2$, and $D$ are all Prüfer domains. Furthermore, $t_1, t_2, \dots, \in D$ and $k \subseteq D$, so $K$ is the field of fractions $D_1, D_2,$ and $D$. We see that $x^2 - t_1$ is unit-valued for $D_1$, so $D_1$ is monic. We also have $0 < w_i(t_1) < 2w_i(t_1)$, so $D_2$ is singular. 
	
	There does not exist $d \in D_2$ and $n \in \N$ such that $0 < w_i(d) < nw_i(t_1)$ for all $i$ and $v_i(d-1) > 0$ for all $i$. This is because there exists $i$ large enough such that $v_i(d-1) = 0$ for any choice of $d$. Therefore, the conditions in Corollary \ref{Cor:MonicSingularApproximation} are not satisfied. 
	
	Nevertheless, we can use Theorem \ref{Thm:MonicSingular}. Set $n=2, d = t_1, f(x) = x^2-t_1$, $g(x) = t_1x^2 -1$, $h_1(x) = x^2$ and $h_2(x) = 1$. We know that $v_i(t_1) = 0$ for all $i$. We verify that $x^2 - t_1$ is unit-valued over $D_1$ since $x^2-t_1$ has no roots over the residue field $k(t_1, t_2, \dots, t_i)$ of $V_i$ for any $i$. Similarly, $g(x)$ is unit-valued over $D_1$ for the same reason. Also, $\deg g = 2$ and for all $a \in K$ and $i$ such that $w_i(g(a))) < 0$, we have $w_i(t_1a^2)$. 
	
	Lastly, we check that
	\[
	dfh_1 + gh_2 = t_1x^4 + (t_1-t_1^2)x^2 - 1
	\]
	is unit-valued over each $V_i$ and each $W_i$. Fix an $i$. Let $\m_i$ be the maximal ideal of $V_i$. Since $V_i/\m_i$ is isomorphic to $k(t_1, \dots, t_i)$, we let $v$ be the valuation corresponding to $k(t_2, \dots, t_i)[t_1]_{(t_1)}$. If $t_1x^4 + (t_1-t_1^2)x^2 - 1 \mod \m_i$ has a root $\xi$, then $v(\xi) = -\frac{v(t_1)}{4}$, which is impossible. Lastly, notice that $dfh_1 + gh_2$ is equivalent to $-1$ modulo the maximal ideal of any $W_i$, which makes $dfh_1 + gh_2$ unit-valued for each $W_i$. Thus, $\IntR(E,D)$ is Prüfer for any subset $E$ of $K$ by Theorem \ref{Thm:MonicSingular}. 
\end{example}

\bibliographystyle{amsalpha}
\bibliography{references}
\end{document}